\documentclass{article}
\usepackage{graphicx} 
\usepackage{amsmath}
\usepackage{amsfonts}
\usepackage{amsthm}
\usepackage{amssymb}
\usepackage{mathrsfs}
\usepackage[margin=1.2in]{geometry}
\usepackage[shortlabels]{enumitem}
\usepackage[thinlines]{easytable}

\newcommand{\G}{\mathcal{G}}
\newcommand{\R}{\mathbb{R}}
\newcommand{\C}{\mathbb{C}}
\newcommand{\g}{\mathfrak{g}}

\newcommand{\p}{\mathfrak{p}}

\renewcommand{\sl}{\mathfrak{sl}}
\newcommand{\h}{\mathfrak{h}}

\newcommand{\tr}{\mathrm{tr}}
\newcommand{\ad}{\mathrm{ad}}

\newcommand{\gl}{\mathfrak{gl}}

\newcommand{\z}{\mathfrak{z}}
\newcommand{\s}{\mathfrak{s}}
\newcommand{\im}{\mathrm{im}}
\newcommand{\Ad}{\mathrm{Ad}}
\newcommand{\Ric}{\mathrm{Ric}}

\renewcommand{\P}{\mathbf{P}}
\newcommand{\1}{\overline{1}}
\newcommand{\Id}{\mathrm{Id}}
\newcommand{\End}{\mathrm{End}}
\newcommand{\E}{\mathcal{E}}
\newcommand{\F}{\mathcal{F}}

\renewcommand{\L}{\mathcal{L}}

\newtheorem{theorem}{Theorem}[section]
\newtheorem{proposition}[theorem]{Proposition}
\newtheorem{lemma}[theorem]{Lemma}
\newtheorem{corollary}[theorem]{Corollary}

\newtheorem{example}[theorem]{Example}
\newtheorem{remark}[theorem]{Remark}
\newtheorem*{thmrhoTensor}{Theorem \ref{thm::rhoTensor}}

\title{WEYL FORMS IN PARABOLIC CONTACT GEOMETRIES}
\author{Toby Aldape}
\date{}

\begin{document}

\maketitle

\begin{abstract} 
   In a parabolic contact geometry a choice of contact form determines a Weyl form consisting of three components: a soldering form, a Weyl connection, and a Rho tensor. An explicit description of the soldering form is well-known. We express the Rho tensor in terms of the Weyl connection in the setting of torsion-free normal parabolic contact geometries. Parabolic contact geometries with a specified contact form have a second canonical connection, the Tanaka--Webster connection. We express the Weyl connection in terms of the Tanaka--Webster connection in the setting of normal parabolic contact geometries. As an application of some of these results we derive formulas for the harmonic curvature components in integrable Legendrian contact geometry, recovering a result of S. Ivanov, D. Vassilev, and S. Zamkovoy \cite{IvanovVassilevZamkovoy}.
\end{abstract}

\section{Introduction}
\subsection{Motivation}
For parabolic geometries, or Cartan geometries modeled on flag varieties, a theory detailed in the standard reference text \cite{CapSlovak2} exhibits an equivalence between Cartan geometries and underlying data. However, translating between Cartan geometries and their underlying data is difficult in general. The Weyl structures developed in \cite{CapSlovak1} provide a bridge between these two perspectives. The Weyl form induced by a Weyl structure is closely linked to the Cartan connection, but also permits concrete tensor calculus. Cartan geometries can equivalently be understood in terms of tractor bundles. From this perspective, a choice of Weyl structure induces a splitting of the tractor bundle.

The Weyl form induced by a Weyl structure is composed of three pieces of data: the soldering form, the Weyl connection, and the Rho tensor. Understanding these three components is equivalent to understanding the tractor connection relative to the splitting of the tractor bundle induced by the Weyl structure. Knowing the tractor connection can be a useful way to express invariant differential operators. For example, Gover and Graham used the CR tractor connection to express invariant operators with leading terms equal to powers of the CR sub-Laplacian \cite{GoverGraham}. Tractor connections are also objects of interest because parallel sections of a tractor connection or a modification of a tractor connection may correspond to geometrically interesting objects. Some examples of this are conformal Killing vector fields, an Einstein metric in a conformal class, or a metric inducing a projective structure.

In this paper we apply an algorithm developed in \cite{CapSlovak2} to derive formulas for the Rho tensor of a torsion-free normal parabolic contact geometry in terms of the Weyl connection. While this quantity is already known in the best-studied geometries where the result applies, our paper provides a uniform treatment of the various parabolic contact geometries.

Once the Rho tensor is expressed in terms of the Weyl connection, it remains to understand the dependence of the Weyl connection on the underlying data. There is a second canonical connection associated to a parabolic contact geometry with a fixed contact form, the Tanaka--Webster connection. The Tanaka--Webster connection is simpler to define than the Weyl connection and appears more often in the literature on CR, Legendrian contact and contact projective structures. A simple recipe for expressing the Weyl connection in terms of the Tanaka--Webster connection is given in \cite{CapSlovak2}. We carry out this recipe on a case-by-case basis depending on the complexification of the contact-graded Lie algebra defining the parabolic geometry.

In addition to invariant operators, a second natural application of the Weyl form is to find explicit curvature formulas. The most important curvature components in normal parabolic geometries are the harmonic curvature components. These invariants are much smaller than the full curvature tensor, but still provide a complete obstruction to flatness, or local isomorphism to the model geometry. In many parabolic contact geometries the harmonic curvature agrees with the contact torsion, which is simple to write in terms of the Weyl or Tanaka--Webster connections. The main exceptions are contact projective, CR, and Legendrian contact geometries. Our formulas can be used to describe the harmonic curvature components in the first two geometries, or in Legendrian contact geometry under the assumption of integrability. As an application of our results, we derive these components for integrable Legendrian contact structures. 

\subsection{Results}
Let $(\G\to M,\omega)$ be a normal parabolic contact geometry modeled on a contact-graded parabolic model geometry $(\g,P)$. The manifold $M$ admits a canonical contact distribution $H\le TM$. Let $Q=TM/H$. Fix a contact form $\theta$ for $H$, equivalently a nonvanishing section of the bundle $Q^*$. This contact form specifies an exact Weyl structure, which itself induces a Weyl form. The Weyl form is composed of three pieces of data. These are \textbf{(1)} an isomorphism $TM\cong Q\oplus H$ called the soldering form, \textbf{(2)} an affine Weyl connection $\nabla$ on $TM$, and \textbf{(3)} a $1$-form valued in the cotangent bundle $\P\in \Omega^1(M,T^*M)$ called the Rho tensor. Let $r$ be the Reeb field associated to $\theta$. Define the \emph{Weyl torsion} $\tau:H\to H$ by $\tau(X)=\nabla_r X -[r,X]$. In this paper indices from the Latin alphabet like $i,j$ always refer to basis vectors in $H$, while the index $0$ refers to the Reeb field $r$. The partially defined $2$-form $L(X,Y)=\theta([X,Y])$ on $H$ can be used to raise and lower Latin indices. Let $n=\frac{1}{2}\dim(H)$. The following is our main result.
\begin{theorem}[Rho Tensor]\label{thm::RhoTensor}
 The Rho tensor $\P$ induced by a contact form on a torsion-free normal parabolic contact geometry is given by
\begin{itemize}
\item $\P_{ij}=-\tau_{ij}$,
\item $\P_{i0}=\frac{1}{2n+1}\nabla^j \tau_{ij}$,
\item $\P_{0i}=\frac{2}{2n+1}\nabla^j \tau_{ij}$,
\item $\P_{00}
=\frac{1}{n}\left( 
\frac{-1}{2n+1}\nabla^i \nabla^j 
\tau_{ij}
+\frac{1}{2}\tr(\tau\circ \tau)
\right)$.
\end{itemize}
\end{theorem}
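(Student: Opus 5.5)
The plan follows the algorithm of \cite{CapSlovak2}: write the curvature of the Cartan connection $\omega$ in terms of the Weyl form and impose normality, $\partial^*\kappa=0$. Since $\P$ sits in the positive-homogeneity part $\g_1\oplus\g_2$, the normality condition determines it uniquely.

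\textbf{Setup.} For the exact Weyl structure determined by $\theta$, the curvature splits into homogeneous components. Write $\kappa=\kappa_{-}+\kappa_0+\kappa_{+}$, valued in $\g_{-2}\oplus\g_{-1}$, $\g_0$ and $\g_1\oplus\g_2$ respectively. In terms of the Weyl form we have:
\begin{itemize}
\item $\kappa_-$ is the torsion of $\nabla$ together with the algebraic bracket term. Torsion-freeness of the parabolic geometry means the harmonic torsion vanishes. Since the Weyl connection is defined so that its torsion has only the contact component encoded by $\tau$, the only surviving piece is $T(r,X)=\nabla_rX-[r,X]-\nabla_X r=\tau(X)$, using $\nabla r=0$ for an exact Weyl structure.
\item $\kappa_0 = R^\nabla + \partial(\P)$, where $R^\nabla$ is the curvature of $\nabla$ and $\partial(\P)$ is the algebraic term $\{\P(\xi),\eta\}-\{\P(\eta),\xi\}$ built from the $\g_{-}\times\g_{+}\to\g_0$ bracket.
\item $\kappa_+ = d^\nabla\P$, up to bracket corrections involving $\tau$.
\end{itemize}

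\textbf{Lowest-homogeneity condition.} Apply $\partial^*$ and project to homogeneity $1$. This says that the $\g_{-2}$-component of $\kappa$ on $H\wedge H$ vanishes, which fixes the soldering form. At homogeneity $2$, the $\g_{-1}$-valued part of $\kappa(r,X)$ is paired by $\partial^*$ against $\g_0$. The key identity is that the $\g_0$-valued $\partial^*$ of $\kappa$ restricted to $H\wedge H$ gives a Ricci-type contraction of $R^\nabla$ plus $\Box(\P|_{H\times H})$, where $\Box$ is the algebraic Kostant-type operator. Here I expect, following \cite{CapSlovak2}, to use the Bianchi identity and torsion-freeness to show that the relevant contraction of $R^\nabla$ on $H$ is expressed through $\tau$. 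Contracting with $L$ then gives $\P_{ij}=-\tau_{ij}$; the sign and normalization come from evaluating the bracket $[\g_{-1},\g_1]\subset\g_0$ with the contact grading element.

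\textbf{Higher homogeneity.} Impose $\partial^*\kappa=0$ in homogeneity $3$, the $\g_1$-valued component. This involves $\kappa(r,X)$ in $\g_0$, namely $R^\nabla(r,X)+\partial\P(r,X)$, and $\kappa$ on $H\wedge H$ in $\g_1$. The latter is $\nabla_i\P_{jk}-\nabla_j\P_{ik}$ plus $\tau$ terms.
\begin{itemize}
\item Contracting via $L$ and substituting $\P_{ij}=-\tau_{ij}$, together with the Bianchi identity $R^\nabla(r,X)\sim \nabla\tau$, yields two linear equations in $\P_{i0}$ and $\P_{0i}$.
\item Their coefficients come from the $\mathrm{ad}$ of the grading element ($\pm1,\pm2$) and the trace $2n$. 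Solving gives the stated $\tfrac{1}{2n+1}$ and $\tfrac{2}{2n+1}$ values.
\item Finally, the homogeneity-$4$ component, the $\g_2$-valued part of $\partial^*\kappa$, involves $d^\nabla\P$ on $H\wedge H$ in the $\g_2$-slot. Its trace gives $2n\,\P_{00}$ in terms of $\nabla^i\P_{i0}$, $\nabla^i\P_{0i}$, and the quadratic bracket term $\P_{ij}\tau^{ij}$, which becomes $\tr(\tau\circ\tau)$. Substituting the previous formulas gives the $\P_{00}$ expression.
\end{itemize}

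\textbf{Main obstacle.} The hard part is the uniform bookkeeping of $\partial^*$ across all contact-graded $\g$. The approach is to do every computation inside $\g_{-2}\oplus\g_{-1}\oplus\g_0\oplus\g_1\oplus\g_2$ using only two ingredients: the grading element $E$, and the identification of $\g_{\pm2}$ via $L$. This works because the relevant brackets $[\g_{-1},\g_1]$ and $[\g_{-2},\g_2]$ are determined universally by $L$ and $E$, with normalizations fixed by the Killing form. The traces should then produce the factors $n$ and $2n+1$ independently of the particular $\g_0$.
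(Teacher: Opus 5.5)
Your overall scheme (impose normality one homogeneity at a time, use a contracted Bianchi identity in homogeneity $3$, take an $L$-trace in homogeneity $4$) is the paper's. The homogeneity-$2$ step, however, has a genuine gap, and it comes from an error in your setup.

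You assign all of $\partial\P$ to $\kappa_0$ and say that $\kappa_-(r,X)=\tau(X)$ survives. But $\partial\P^{(2)}(r,X)=\{r,\P^{(2)}(X)\}=\P^{(2)}(X)^{\sharp}$ is $\g_{-1}$-valued, since $[\g_{-2},\g_1]\subset\g_{-1}$. Hence $\kappa_-(r,X)=\tau(X)+\P^{(2)}(X)^{\sharp}$, and torsion-freeness (curvature valued in $\p$) says exactly that this vanishes. That gives $\P_{ij}=-\tau_{ij}$ in one line, with no normality, Bianchi identity or Laplacian; this is the paper's argument.

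The route you propose instead does not close. On $H$ the algebraic Bianchi identity reads $\sum_{XYZ}R(X,Y)Z=-\sum_{XYZ}L(X,Y)\tau(Z)$, because $\widetilde{T}(X,Y)\in\langle r\rangle$ and $\widetilde{T}(r,Z)=\tau(Z)$. This only relates the contraction $L^{ab}R(X_a,X_b)$ to the other Ricci-type contraction of $R$, and fixes its skew part, modulo $\tau$. The remaining symmetric piece is independent data (essentially $\widehat{\Ric}$). It becomes expressible through $\tau$ only as a consequence of the homogeneity-$2$ normality equations together with the value of $\P^{(2)}$ you are trying to find, so the argument is circular.

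Two further problems arise on that route. First, $\g_{-1}^*\otimes\g_1$ is not $\g_0$-irreducible, so one $L$-contraction cannot recover all of $\P_{ij}$. Second, writing $\P^{(2)}=-\square^{-1}\partial^*(T+R)^{(2)}$ requires $\partial^*\P^{(2)}=0$, which the paper flags as the subtle point its shortcut avoids.

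There is also a smaller inaccuracy in homogeneity $4$. The $\g_{-2}^*\otimes\g_2$ component of $\partial^*\kappa$ is not just the $L$-trace of the $\g_2$-slot of $\kappa$ on $H\wedge H$. The first term of the codifferential also pairs $X^i\in\g_1$ with the $\g_1$-valued component $\kappa(r,X_i)$. That component is $Y_{0ij}=\nabla_0\P_{ij}-\nabla_i\P_{0j}+\tau_i^{\ k}\P_{kj}$ plus $\partial\P^{(4)}(r,X)=\P_{00}X^{\flat}$. This is where $\nabla^i\P_{0i}$ enters; the $H\wedge H$ component only contains $\nabla_i\P_{j0}-\nabla_j\P_{i0}$. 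The two contractions together give $\partial^*\partial\P^{(4)}=-3n\,\P^{(4)}$.

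The extra contraction turns out to be exactly twice the $H\wedge H$ one, both for $Y^{(4)}$ and for $\partial\P^{(4)}$. So your equation happens to return the correct $\P_{00}$, but that proportionality must be proved, for instance by computing both contractions as the paper does.

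Similarly, in homogeneity $3$ the Bianchi identity does not express $R(r,X)$ through $\nabla\tau$. Only the trace $R_{i0j}^{\ \ \ i}=\nabla_i\tau_j^{\ i}$ is available, and it needs exactness and $\nabla r=0$. You need to check two things:
\begin{itemize}
\item this trace is precisely what $\partial^*$ picks up in $\g_{-2}^*\otimes\g_1$;
\item the contraction of $R^{(3)}$ into $\g_{-1}^*\otimes\g_2$ vanishes, because $R$ acts trivially on $Q$.
\end{itemize}
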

\begin{remark}
    Examples of torsion-free normal parabolic contact geometries include CR geometries, integrable Legendrian contact geometries, and contact projective geometries with vanishing contact torsion, in addition to all flat parabolic contact geometries.
\end{remark}
\begin{remark}
    In flat geometries, there always exist local Weyl structures for which the Rho tensor vanishes. This can be seen to be equivalent to the vanishing of the Weyl torsion $\tau$. In the language of CR geometry, this happens exactly when the pseudohermitian structure $\theta$ is Sasakian and Ricci-flat.
\end{remark}

The choice of contact form determines a second affine connection, the Tanaka--Webster connection $\widehat{\nabla}$, which was defined in this generality in \cite{CapSlovak2}. The Reeb field $r$ is parallel relative to both $\nabla$ and $\widehat{\nabla}$, that is $\nabla r=\widehat{\nabla}r=0$. The two connections also agree in contact directions, meaning that for $X\in H$, $\nabla_X=\widehat{\nabla}_X$. It follows that if $\widehat{\nabla}$ is known then $\nabla$ is completely determined by the tensor $\rho:H\to H$ such that for $X\in H$,
\[
\nabla_r X=\widehat{\nabla}_r X-\rho(X).
\]

If $\widehat{R}_{ijk}^{\ \ \ l}$ is the curvature tensor of $\widehat{\nabla}$, there is an associated Ricci tensor defined by $\widehat{\Ric}_i^{\ j}=\frac{1}{2}\widehat{R}_{k\ i}^{\ k \ j}$. It is shown in \cite{CapSlovak2} that $\rho$ can be understood as a modified version of the Ricci tensor, and thus we call it the modified Ricci tensor. The modification depends on the Dynkin diagram type of the complexification $\g^{\C}$. If $\g^{\C}\cong C_l$ for $l\ge 2$, or if $\g^{\C}$ is an exceptional simple Lie algebra, then $\rho$ is always related to $\widehat{\Ric}$ by a constant factor. However, if $\g^{\C}\cong A_l$, for example in the case of Legendrian contact or partially integrable CR geometries, or if $\g^{\C}\cong B_l$ for $l\ge 3$ or $\g^{\C}\cong D_l$ for $l\ge 4$, for example in the case of Lie contact geometries, then the answer is more complicated.

In the case $\g^{\C}\cong A_l$, there is a neighborhood of any point in which there is a canonical unordered decomposition of $H^{\C}$ into two complex $n$-dimensional distributions. Fixing an ordering, we get $H^{\C}=\E\oplus \E$. These distributions $\E$ and $\E$ are preserved by $\widehat{\Ric}$. Define $\widehat{R}$ to be the trace of $\widehat{\Ric}$ acting on $\E$. It follows that $-\widehat{R}$ is the trace of $\widehat{\Ric}$ acting on $\E$. Define $\iota$ to be the involution acting by $+1$ on $\E$ and $-1$ on $\E$. While these quantities depend on the ordering of $\E$ and $\E$, the quantity $\widehat{R}\iota $ does not and is therefore fully canonical.

In the case where $\g^{\C}$ belongs to the $B_l$ or $D_l$ series, $\widehat{\Ric}$ canonically decomposes into two components $\widehat{\Ric}'$ and $\widehat{\Ric}''$. The following theorem characterizes the modified Ricci tensor in each case.
\begin{theorem}[Modified Ricci Tensor]\label{thm::rhoTensor}
Given a normal parabolic contact geometry with a fixed contact form, the modified Ricci tensor $\rho$ can be computed as follows.
\begin{itemize}
\item If $\g^{\C}\cong A_l$ for $l\ge 2$, $\rho=\frac{1}{n+2}\left(\widehat{\Ric}-\frac{\widehat{R}}{2(n+1)}\iota\right)$.
\item If $\g^{\C}\cong B_l$ for $l\ge 3$ or $\g^{\C}\cong D_l$ for $l\ge 4$, $\rho=\frac{\widehat{\Ric}'}{2n}+\frac{\widehat{\Ric}''}{n+4}$.
\item If $\g^{\C}\cong C_l$ for $l\ge 2$, $\rho=\frac{\widehat{\Ric}}{n+1}$.
\item If $\g^{\C}\cong E_6$, $\rho=\frac{\widehat{\Ric}}{16}$.
\item If $\g^{\C}\cong E_7$, $\rho=\frac{\widehat{\Ric}}{24}$.
\item If $\g^{\C}\cong E_8$, $\rho=\frac{\widehat{\Ric}}{40}$.
\item If $\g^{\C}\cong F_4$, $\rho=\frac{\widehat{\Ric}}{12}$.
\item If $\g^{\C}\cong G_2$, $\rho=\frac{3\widehat{\Ric}}{16}$.
\end{itemize}
\end{theorem}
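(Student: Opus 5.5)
The plan follows the recipe of \cite{CapSlovak2} relating the exact Weyl connection of a contact form to the Tanaka--Webster connection. There the difference $\rho$ is determined by normality. In the grading $\g=\g_{-2}\oplus\g_{-1}\oplus\g_0\oplus\g_1\oplus\g_2$, the component of the Weyl form in $\g_1$ evaluated on the Reeb field, together with the $\g_0$-component of the Weyl connection in the $r$-direction, must make the homogeneity-one part of $\partial^*\kappa$ vanish. Concretely, the curvature of $\nabla$ in the $(r,X)$ slot is compared with the curvature of $\widehat{\nabla}$. The term $\rho$ enters linearly through the Lie bracket $[\g_{-2},\g_2]\to\g_0$ and through $\partial^*$. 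The condition $\partial^*\kappa=0$ in the relevant homogeneity then becomes a linear equation
\[
\square(\rho)=c\cdot\widehat{\Ric}
\]
for a $\g_0$-equivariant operator $\square$ on $\mathfrak{gl}(H)$, or more precisely on the part of $\End(\g_{-1})$ in the image of $\g_0$ plus its complement. The first step is to derive this equation uniformly, with $\widehat{\Ric}_i^{\ j}=\frac12\widehat{R}_{k\ i}^{\ k\ j}$ appearing as the contraction of $\widehat{R}$ via $L$.

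Once this is done, $\square$ is essentially a Kostant Laplacian (Casimir-type) operator restricted to $\g_0$-irreducible pieces of $\End(\g_{-1})\cong\g_{-1}\otimes\g_1$. By Schur's lemma it acts by a scalar on each irreducible component. The second step is therefore to decompose $\g_{-1}\otimes\g_1$ under the semisimple part of $\g_0$ for each contact grading, using the standard list: $A_l$ gives $\g_0^{ss}=\sl(n)$ (or $\sl(n)\oplus$ center pieces) with $\g_{-1}=\E\oplus\E'$; $B_l,D_l$ give $\sl(2)\times\mathfrak{so}(m)$; $C_l$ gives $\mathfrak{sp}(2n-2)$ acting irreducibly; the exceptional cases give $\g_0^{ss}$ with $\g_{-1}$ an irreducible (Freudenthal-type) module. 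Then $\square$ is evaluated on each component by computing Casimir eigenvalues, or equivalently by a direct contraction in an adapted basis.

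In the irreducible cases ($C_l$ and the exceptional algebras), only one relevant component pairs with $\widehat{\Ric}$, so $\rho$ is a constant multiple of $\widehat{\Ric}$. The constants $n+1,16,24,40,12,16/3$ come from the dual Coxeter number and the dimension of $\g_{-1}$ ($2n=2h^\vee-4$ etc.). For $B/D$ there are two components, $\widehat{\Ric}'$ and $\widehat{\Ric}''$, corresponding to the two simple factors of $\g_0^{ss}$, giving the two scalars $2n$ and $n+4$. For $A_l$ the tensor preserves $\E\oplus\E'$, and $\square$ acts differently on the trace parts. Inverting $\square$ on the span of $\Id$ and $\iota$ yields the correction term $-\frac{\widehat{R}}{2(n+1)}\iota$.

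The main obstacle is the first step: carefully deriving the exact form of $\square$ from normality, including the contribution of the central grading element and the normalization of $L$ and $\widehat{\Ric}$. The eigenvalue bookkeeping for the $A_l$ case is also error-prone, because the trace parts interact with the grading element. As a check, the $A_l$ constant should be tested against the known CR formula $\rho=\frac{1}{n+2}(\Ric-\frac{R}{2(n+1)}h)$, and the $C_l$ constant against contact projective geometry.
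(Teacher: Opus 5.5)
Your skeleton (normality gives a $\g_0$-equivariant linear equation between $\rho$ and $\widehat{\Ric}$, which you then invert type by type using Kostant scalars) matches the paper's. However, the step that carries all the content is left open, and the concrete form you give for it produces wrong constants.

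The equation is $-(s+\square)\rho=\widehat{\Ric}$ (Theorem 5.2.13 of \cite{CapSlovak2}). Here $\square$ is the Kostant Laplacian on $\g_0\subset C^0(\g_-,\g)$, and $s$ is defined by $\{\cdot,\cdot\}\L=s\,\Id_{\g_{-2}}$. Contracting the Levi bracket gives $s=-n$ under $|E|^2=2$, so the operator to invert is $n-\square$. The shift comes from the term that the change in the Reeb direction adds to the homogeneity-$2$ curvature, $R(X,Y)=\widehat{R}(X,Y)+L(X,Y)\rho$ for $X,Y\in H$. It is read off in the $\g_{-2}^*\otimes\g_0$-component of $\partial^*\kappa^{(2)}$. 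It is not a homogeneity-one condition, the $\g_1$-part of the Weyl form on $r$ plays no role, and the bracket $[\g_{-2},\g_2]$ only produces $E$.

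If you take Kostant scalars on the $\g_0$-type pieces of $\End(\g_{-1})\cong\g_{-1}\otimes\g_1$, as you propose, you get the unshifted values. In type $A_l$ these are $-(n+2)$ on $\z(\g_0)$ and $-2$ on $\g_0^{ss}$. Then no constant $c$ in $\square\rho=c\,\widehat{\Ric}$ reproduces the weights the statement requires: $\frac{1}{n+2}$ on the $\g_0^{ss}$-component and $\frac{1}{2(n+1)}$ on the $\iota$-component. (A pure Kostant-Laplacian picture does work on $\g_{-2}^*\otimes\g_0$, where $\rho$ naturally lives as the Reeb component of the connection difference. There the dual highest weight shifts by $-\mu$, and since $\langle\nu,\mu\rangle=0$ for weights of $\g_0$, Kostant's formula gives exactly $-(n-\square)$.)

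You also need the reduction that makes the case analysis short. Since $\widehat{\nabla}r=0$, $\widehat{\Ric}$ acts trivially on $Q$. So it lies in the codimension-one subspace of $\g_0$ annihilating $\g_{-2}$, which contains $\g_0^{ss}$. Because $\dim\z(\g_0)$ equals the number of crossed nodes, $\widehat{\Ric}\in\g_0^{ss}$ in every type except $A_l$. In type $A_l$ one gets $\widehat{\Ric}=\widehat{\Ric}_0+\frac{\widehat{R}}{n}\iota$ with $\widehat{\Ric}_0\in\g_0^{ss}$.

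Hence only the scalars on $\z(\g_0)$ and on the simple ideals of $\g_0^{ss}$ matter, not a decomposition of all of $\g_{-1}\otimes\g_1$. There is no $\Id$-component to invert, and $n-\square$ acts by the single scalar $2(n+1)$ on all of $\z(\g_0)$. The $\iota$-correction is just the mismatch with the value $n+2$ on $\g_0^{ss}$, giving $\rho=\frac{1}{n+2}\widehat{\Ric}_0+\frac{\widehat{R}}{2n(n+1)}\iota$.

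Finally, the constants must be computed, not matched to the answer. With $|\mu|^2=2$ and $\langle\mu,\delta\rangle=n+1$, Kostant's formula gives $n-\square=2n+1-\langle\nu,\delta\rangle$ on a simple ideal whose highest weight $\nu$ has $|\nu|^2=2$. For example, $\nu=\mu-2\alpha_1$ gives $n+1$ in type $C_l$, and $\langle\nu,\delta\rangle=5$ gives $16$ for $E_6$. The cases $B_3$ and $G_2$ must be treated separately because a short $\nu$ occurs there. The relation $n=h^\vee-2$ alone does not determine these values. (Also, in type $C_l$ one has $\g_0^{ss}\cong\mathfrak{sp}(2n)$, not $\mathfrak{sp}(2n-2)$.)
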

Let $\widehat{\tau}:H\to H$, the Webster torsion, be defined by $\widehat{\tau}(X)=\widehat{\nabla}_rX-[r,X]$. According to the definition of the Tanaka--Webster connection, the Webster torsion is sharply constrained. More specifically, $\widehat{\tau}$ is harmonic. With Theorem~\ref{thm::rhoTensor} in place it can be helpful to express the results of Theorem~\ref{thm::RhoTensor} in terms of the decomposition
\[
\tau=\widehat{\tau}-\rho.
\]
Finally, we apply some of our results to compute the harmonic curvature invariants in integrable Legendrian contact geometry in terms of the Tanaka--Webster connection. In dimension $3$ we define tensors $Q_{11}$ and $Q_{\1\1}$ in equations \eqref{eqn::Umbilical1} and \eqref{eqn::Umbilical2}, analogs of the Cartan umbilical tensor in CR geometry. In higher dimensions we define a tensor $C_{\alpha \overline{\beta}\gamma \overline{\sigma}}$ in equation \eqref{eqn::ChernMoser}, an analog of the Chern--Moser tensor in CR geometry. These tensors fully characterize flatness in integrable Legendrian contact geometry.
\begin{theorem}[Harmonic Curvature in Legendrian Contact Geometry]\label{thm::HarmonicCurvature}
\begin{enumerate}[(a)]
\item An integrable Legendrian contact manifold of dimension $3$ is locally isomorphic to the model space $\mathrm{Flag}_{1,2}(\R^{3})$ if and only if $Q_{11}=Q_{\overline{1}\overline{1}}=0$.
\item An integrable Legendrian contact manifold of dimension $2n+1\ge 5$ is locally isomorphic to the model space $\mathrm{Flag}_{1,n+1}(\R^{n+2})$ if and only if $C_{\alpha \overline{\beta}\gamma \overline{\sigma}}=0$.
\end{enumerate}
\end{theorem}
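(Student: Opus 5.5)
Since the harmonic curvature of a normal parabolic geometry completely obstructs local flatness, the plan is to identify the harmonic curvature of an integrable Legendrian contact geometry with the stated tensors. The model here is $\g=\sl(n+2,\R)$ with the contact grading, and $\g_0\cong \gl(n,\R)\oplus\R$ acting on $\g_{-1}=\E\oplus\F$. The two distributions are the ones written $\E\oplus\E$ in the introduction. Below I write them as $\E$ and $\F$, with indices $\alpha$ and $\overline{\beta}$ respectively, to tell them apart.

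First I would compute $H^2(\g_{-},\g)$ via Kostant's theorem. For $n\ge 2$ it has two torsion components, of homogeneity $1$, which are the obstructions to integrability of $\E$ and $\F$, and one curvature component of homogeneity $2$. The latter is a $(2,0)$-type piece valued in $\g_0$ and lies in $\E^*\otimes\F^*\otimes\mathfrak{sl}(\g_{-1})$. Since the geometry is integrable, the torsion components vanish, so the homogeneity $2$ component is the full harmonic curvature. It is then the lowest nonvanishing homogeneous piece of $\kappa$. By the standard result of \cite{CapSlovak2}, it is obtained by projecting the Weyl curvature $W=R^\nabla+\partial(\P)$ to the harmonic part, that is, the totally trace-free part of the $(\alpha,\overline\beta)$-block of $R^\nabla$ corrected by $\P_{\alpha\overline\beta}$.

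For $n=1$ ($\dim M=3$), Kostant instead gives two components of homogeneity $4$. These are $\ker\square$ pieces in $\g_{-1}^*\wedge\g_{-2}^*\otimes\g_1$ of weights $\E^*\otimes Q^*\otimes\F^*$ type. Their vanishing characterizes flatness, and they are the analogue of Cartan's umbilical tensor. They are computed as the $\E$ and $\F$ projections of the $(\g_{-1},\g_{-2})$-component of the Weyl form curvature with values in $\g_1$. Concretely, this is $d^\nabla\P$ restricted to $(X,r)$, i.e. a Cotton-type tensor $\nabla_1\P_{10}-\nabla_0\P_{11}+\dots$, with the lower-homogeneity components vanishing by normality.

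Next I would make everything explicit in Tanaka--Webster terms. Theorem~\ref{thm::RhoTensor} gives $\P$ in terms of $\tau$ and $\nabla$. Theorem~\ref{thm::rhoTensor} in the $A_l$ case gives $\tau=\widehat\tau-\rho$ with $\rho=\frac{1}{n+2}(\widehat{\Ric}-\frac{\widehat R}{2(n+1)}\iota)$. Finally $\nabla$ and $\widehat\nabla$ agree on $H$ and differ along $r$ by $\rho$. Substituting these, I would:
\begin{itemize}
\item in higher dimensions, write $C_{\alpha\overline\beta\gamma\overline\sigma}$ as $\widehat R_{\alpha\overline\beta\gamma\overline\sigma}$ minus trace terms built from $\widehat{\Ric}$ and $\widehat R$, in the Chern--Moser form;
\item in dimension $3$, write $Q_{11}$ and $Q_{\1\1}$ as third-order expressions involving $\widehat\nabla\widehat{\Ric}$, $\widehat\nabla_0\widehat\tau$, and similar terms.
\end{itemize}
Since the Webster torsion $\widehat\tau$ is harmonic, it pairs $\E$ with $\E$ and $\F$ with $\F$ only in the trace-free part, which simplifies the traces.

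I expect the main obstacle to be the three-dimensional case: one must verify that the homogeneity $2$ curvature vanishes identically there, so that the first nonzero harmonic piece can be read off from the Weyl form. This requires the Bianchi identity, and tracking the many terms from $\P_{00}$ and $\P_{i0}$ is error-prone. I would check the result against the formulas of \cite{IvanovVassilevZamkovoy}. The final equivalence with local isomorphism to $\mathrm{Flag}$ then follows from the vanishing of harmonic curvature implying flatness.
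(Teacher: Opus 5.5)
Your plan is correct and follows the paper's argument. You use Kostant's theorem to locate the harmonic curvature: $E\times F\to\g_0^{ss}$ in homogeneity $2$ for $n\ge 2$, and homogeneity $4$ for $n=1$. The lowest nonzero homogeneous piece of $\kappa=T+R+Y+\partial\P$ is then already harmonic, and you evaluate it with Theorems~\ref{thm::RhoTensor} and~\ref{thm::rhoTensor} together with $\nabla_rX=\widehat{\nabla}_rX-\rho(X)$, which gives $R(X,Y)=\widehat{R}(X,Y)+L(X,Y)\rho$ on $H$.

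A few corrections for dimension $3$. The harmonic pieces are $Q\times E\to E^*$ and $Q\times F\to F^*$, which are the paper's $\kappa_{011}$ and $\kappa_{0\1\1}$; they are not an $F^*$-valued piece on $E$. The Cotton--York term uses $\P_{0i}=2\P_{i0}$, not $\P_{i0}$. The $\P_{00}$ contribution $\partial\P^{(4)}_{011}$ vanishes, and $\kappa^{(\le 3)}=0$ follows automatically from the general theory, so no extra Bianchi argument is needed. The remaining work is the conversion $\nabla_0\tau_{11}=\widehat{\nabla}_0\widehat{\tau}_{11}+\frac{\widehat{R}}{2}\widehat{\tau}_{11}$ together with $(\tau\circ\tau)_{11}=0$.
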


\subsection{Previous Work}
The Weyl form or tractor connection is known in many parabolic geometries. It is relatively straightforward to describe in $|1|$-graded parabolic geometries, and \cite{CapSlovak2} does so for conformal and projective structures. Weyl forms for quaternionic contact geometry, a structure involving a specified distribution of codimension $3$, were derived by Alt \cite{Alt}.

Specializing to parabolic contact geometries, the Rho tensor has been derived previously in the best-studied cases. Of the parabolic contact geometries, CR geometry seems to be the most intensively studied. In \cite{GoverGraham} Gover and Graham found the tractor connection in CR geometry. In \cite{Matsumoto} Matsumoto gave a more detailed derivation of the tractor connection that also covered partially integrable CR structures, CR structures whose Cartan connection is permitted to have torsion. In \cite{Harrison} Harrison built a tractor connection for contact projective geometries with vanishing contact torsion and Fox in \cite{Fox} generalized this to contact projective structures with possibly nonvanishing contact torsion. Our formulas for the Rho tensor only hold when the Cartan geometry is assumed to be torsion-free. We note that these results do apply to the locally symmetric parabolic contact geometries defined by Zalabova \cite{Zalabova}, since these are automatically torsion-free.

A generalized path geometry is a kind of parabolic geometry that abstracts some features of second order ODEs into a geometric structure. In dimension $3$, the notion of a generalized path geometry coincides with that of a Legendrian contact geometry. As a result, a $3$-dimensional generalized path geometry is parabolic contact, and our results on the Rho tensor and the Weyl connection apply in this case. Meanwhile, the paper \cite{CapGuo} by \v Cap and Guo has explicit formulas for the Rho tensor in generalized path geometries, which, in particular, cover the $3$-dimensional case. However, the Rho tensors found in \cite{CapGuo} correspond to a different class of exact Weyl structures than the ones used in this paper. An exact Weyl structure is induced by a choice of section of a line bundle called a bundle of scales. While in most parabolic contact geometries there is essentially only one choice for the bundle of scales, the parabolic contact geometries corresponding to the $A_l$ series are unique in the sense that there is actually a $1$-parameter family of essentially distinct bundles of scales. While our paper takes the bundle of scales to be $Q^*$, the paper \cite{CapGuo} takes the bundle of scales to be the bundle $E$, see the definition of Legendrian contact structures later in this paper. 

The Tanaka--Webster connection for CR manifolds was independently introduced by Tanaka \cite{Tanaka2} and Webster \cite{Webster}. Fox in \cite{Fox} introduced a connection associated to a contact projective structure with a fixed contact form. In \cite{CapSlovak2} \v Cap and Slov\'ak generalized both of those connections by defining a Tanaka--Webster connection in the broader context of parabolic contact geometries.

Harmonic curvature formulas for Legendrian contact structures have a long history, and we review it here. Following the literature is somewhat complicated by the fact that Legendrian contact structures go by many different names, including Lagrangian contact structures, bi-Legendrian foliations, and para-CR structures. 

As stated above, a Legendrian contact structure in dimension $3$ is exactly the same as a generalized path geometry. The two components of harmonic curvature $Q_{11}$ and $Q_{\1\1}$ were already known in some form to Cartan. Cartan studied second-order ODEs in two dimensions of the form 
\begin{equation} \label{eqn::ODE}
 y''=f(x,y,p)   
\end{equation} 
where $p=\frac{dy}{dx}$. Given such an equation, the projectivized tangent bundle of $\R^2$ inherits the structure of a generalized path geometry. From this starting point, Cartan expressed one of the two components of harmonic curvature as $f_{pppp}$ and the other as a more complicated formula involving $f$. In fact, the term $f_{pppp}$ is exactly the obstruction to solutions of \eqref{eqn::ODE} being the geodesics of some affine connection. Later, \cite{BryantGriffithsHsu} gave a more modern treatment of these two curvature components. Finally, Ivanov, Vassilev and Zamkovoy expressed the fundamental curvature invariant in Legendrian contact geometry in terms of the Tanaka--Webster connection in \cite{IvanovVassilevZamkovoy}, combining these two harmonic components into a single tensor.

In the closely analogous realm of $3$-dimensional CR geometry, the two harmonic curvature components become one, known as the Cartan umbilical tensor. This tensor was also known in some form to Cartan. However, the Tanaka--Webster connection was invented much later. The Cartan umbilical tensor was expressed in terms of the Tanaka--Webster connection in \cite{ChengLee}. Our tensors $Q_{11}$ and $Q_{\1\1}$ are nearly identical to this expression.

In dimensions higher than $3$, the fundamental curvature invariant in CR geometry is known as the Chern--Moser tensor. It was discovered by Tanaka \cite{Tanaka1} and further elaborated by Chern and Moser \cite{ChernMoser}. In \cite{Webster} Webster expressed the Chern--Moser tensor in terms of the Tanaka--Webster connection. 

In higher-dimensional Legendrian contact geometry, Montano proved in \cite{Montano} that vanishing Tanaka--Webster curvature implies local isomorphism to the model geometry. Working by analogy with the CR case, Ivanov, Vassilev and Zamkovoy found the expression for the fundamental curvature invariant in \cite{IvanovVassilevZamkovoy}, equivalent to our tensor $C_{\alpha\overline{\beta}\gamma\overline{\sigma}}$, improving Montano's result.
\subsection{Road Map}
Section 2 introduces important background used in the proof of the main result, Theorem~\ref{thm::RhoTensor}. Section 3.1 derives universal formulas for certain components of the Lie bracket of a contact-graded Lie algebra. Section 3.2 discusses the use of index notation in this paper.

Sections 4.1--4.2 introduces the Weyl torsion tensor $\tau$ and prove a contracted Bianchi identity relating it to the Weyl curvature tensor $R$. Sections 4.3--4.5 solve for the Rho tensor $\P$ homogeneity by homogeneity, concluding the proof of Theorem~\ref{thm::RhoTensor}. These sections rely on the eigenvalues of $\square$ computed in Section~\ref{app:1Cochains}.

Section 5 is concerned with the Tanaka--Webster connection and the proof of Theorem~\ref{thm::rhoTensor}, which depends on the eigenvalues of $\square$ acting on $\g_0$ proven in Section~\ref{app::0Cochains}. 

Section 6 defines Legendrian contact structures, develops some of the matrix algebra relevant to these structures, and gives an elementary definition of the Tanaka--Webster connection in this context. Finally, we use Theorem~\ref{thm::RhoTensor}, Theorem~\ref{thm::rhoTensor} and some of the components of the Cotton--York tensor $Y$ derived while proving Theorem~\ref{thm::RhoTensor} to prove Theorem~\ref{thm::HarmonicCurvature}.
\section*{Acknowledgements}
I want to thank Robert Bryant, Andreas \v Cap, Marc Herzlich and Jacob Erickson for responding either to my emails or MSE/MO questions. Part of this research was funded by the NSF Groups and Dynamics RTG (1937215) award to UT Austin.
\section{Preliminaries}
\subsection{Gradings}
Let $\g$ be a semisimple Lie algebra. A $|k|$-grading of $\g$ is a decomposition
\[
\g=\g_{-k}\oplus \cdots \oplus \g_k
\]
respecting the Lie bracket for which $\g_{-1}$ generates the negatively graded components and $\g_{-k},\g_k\neq 0$. Define $\g^i=\g_i\oplus \cdots \oplus \g_k$. Let $\g_-$ be the direct sum of the negatively graded components, $\p=\g^0$ and $\p_+=\g^1$. In particular, $\p$ is a parabolic subalgebra and $\p_+$ is its nilradical. A \emph{contact grading} is a $|2|$-grading for which $\dim(\g_{-2})=1$, the \emph{Levi bracket} $\L:\g_{-1}\times \g_{-1}\to \g_{-2}$ defined by restricting the Lie bracket is nondegenerate, and $\g_0$ contains no simple ideals of $\g$. If $\g$ is a contact-graded real Lie algebra, the complexification $\g^{\C}$ is a contact-graded complex Lie algebra. The complex contact gradings are completely classified by the fact that every complex simple Lie algebra not isomorphic to $\sl_2$ admits a unique contact grading, up to isomorphism. A real form of a complex simple Lie algebra may have at most one contact grading up to isomorphism.

The Levi subalgebra $\g_0$ has semisimple part $\g_0^{ss}=[\g_0,\g_0]$. If $\z(\g_0)$ is the center of $\g_0$, then $\g_0=\z(\g_0)\oplus \g_0^{ss}$. Let $\g$ be a complex semisimple Lie algebra with Cartan subalgebra $\h$ and positive root system $\Delta^+$. Let $\Delta^0\subset \Delta^+$ be the simple roots. As detailed in \cite{CapSlovak2}, the $|k|$-gradings of $\g$ are determined up to isomorphism by a choice of subset $I\subset \Delta^0$, where the homogeneity of a root space in $\g$ is determined by the $I$-height of its corresponding root. For a $|k|$-grading induced in this fashion, $\h=\z(\g_0)\oplus \h'$ for $\h'$ a Cartan subalgebra of $\g_0^{ss}$. 

Denote the highest root of $\g$ by $\mu$, and the $i$-th simple root by $\alpha_i$ according to the ordering of simple roots used in the diagrams in the Appendix. Denote the $i$-th fundamental weight by $\lambda_i$. The contact gradings are exactly the ones determined by the choice $I=\{\alpha\in \Delta^0: \langle \alpha, \mu \rangle\neq 0 \}$.

Every $|k|$-grading is associated to an element $E\in \z(\g_0)$ such that $[E,X]=iX$ for $X\in \g_i$, called the \emph{grading element}. We assume a fixed choice of nondegenerate, $\g$-invariant bilinear form $\langle \cdot, \cdot \rangle$ on $\g$. For $\g$ simple, there is in fact a unique choice of $\g$-invariant bilinear form up to scale. This form induces isomorphisms $\g_i\cong \g_{-i}^*$ and $\p_+\cong \g_-^*$. 

A choice of such a form is often standardized as the Killing form, but in this paper we will instead assume the normalization condition $|E|^2=2$. Let $H_{\mu}\in \h$ be the element dual to $\mu\in \h^*$ relative to our fixed bilinear form. If $\g$ is contact-graded, it is established in \cite{CapSlovak2} that $E=\frac{2}{|\mu|^2}H_{\mu}$. It follows from our normalization condition that $2=|E|^2=\frac{4}{|\mu|^2}$, so $|\mu|^2=2$ and $E=H_{\mu}$.

\subsection{Differentials}
We fix the convention on wedge product such that for a $k$-form $\omega$ and an $l$-form $\eta$, 
\[
\omega\wedge \eta(X_1,\dots,X_{k+l})=\frac{1}{k!l!}\sum_{\sigma}\mathrm{sgn}(\sigma)\omega(X_{\sigma(1)},\dots,X_{\sigma(k)})\eta(X_{\sigma(k+1)},\dots,X_{\sigma(k+l)}).
\]
Let $\partial^*:C^k(\g/\p,\g)\to C^{k-1}(\g/\p,\g)$ be the Kostant codifferential. This depends on both the fixed bilinear form and the wedge convention. We have the following formula to compute the Kostant codifferential of a $2$-cochain. Let $X_i$ be a basis for $\g_-$ and let $X^i$ be a dual basis in $\p_+$. If $\phi\in C^2(\g/\p,\g)$ is a $2$-cochain then for $X\in \g/\p$, 
\begin{equation}\label{eqn::KostantCod1}
\partial^*\phi(X)=\sum[X^i,\phi(X,X_i)]-\frac{1}{2}\sum \phi([X^i,X],X_i).
\end{equation}
Taking advantage of the isomorphism $C^2(\g/\p,\g)\cong C_2(\p_+,\g)$, the codifferential acts on a decomposable element by
\begin{equation}\label{eqn::KostantCod2}
\partial^*(X^1\wedge X^2\otimes A)=-X^2\otimes [X^1,A]+X^1\otimes [X^2,A]-[X^1,X^2]\otimes A.
\end{equation} 
It is helpful to compare these two expressions for the Kostant codifferential. The first term of \eqref{eqn::KostantCod1} corresponds to the first two terms of \eqref{eqn::KostantCod2}, and the second term of \eqref{eqn::KostantCod1} corresponds to the third term of \eqref{eqn::KostantCod2}.

By identifying $\g/\p\cong \g_{-}$, we can also think of the Kostant codifferential as a map $\partial^*:C^k(\g_-,\g)\to C^{k-1}(\g_-,\g)$.
Let $\partial:C^k(\g_-,\g)\to C^{k+1}(\g_-,\g)$ be the differential. On a $0$-cochain $\phi$ this acts by
\[
\partial \phi(X)=[X,\phi]
\]
and on a $1$-cochain $\partial$ acts by
\[
\partial \phi(X,Y)=[X,\phi(Y)]-[Y,\phi(X)]-\phi([X,Y]).
\]
The \emph{Kostant Laplacian} $\square:C^k(\g_-,\g)\to C^k(\g_-,\g)$ is defined by $\square=\partial \partial^*+\partial^* \partial$. 

\subsection{Cartan Geometry}
A \emph{model geometry} is a pair $(\g,P)$ consisting of a Lie algebra $\g$, a Lie group $P$ such that $\p=\mathrm{Lie}(P)$ is a subalgebra of $\g$, and an action $P\curvearrowright \g$ by automorphisms that extends the adjoint action by $P$ on $\p$ and induces the adjoint action by $\p$ on $\g$. This is most often specified by a pair $(G,P)$ of a Lie group $G$ and a closed subgroup $P$, and then $\g$ is defined to be the Lie algebra of $G$. A \emph{parabolic model geometry} is a model geometry $(\g,P)$ where $\g$ is semisimple with a fixed $|k|$-grading and $\p=\mathrm{Lie}(P)=\g_0\oplus \cdots \oplus \g_k$. A \emph{parabolic geometry} is a Cartan geometry modeled on a parabolic model geometry $(\g,P)$.

Let $(\G\to M,\omega)$ be a Cartan geometry modeled on the parabolic model geometry $(\g,P)$. The \emph{curvature} of this Cartan geometry is a $\g$-valued $2$-form $\Omega=d\omega+\frac{1}{2}[\omega,\omega]$. Because $\Omega$ is horizontal and $P$-equivariant, it corresponds to a section of $\G\times_P C^2(\g/\p,\g)$. Since $\partial^*:C^2(\g/\p,\g)\to C^1(\g/\p,\g)$ is $P$-equivariant, it induces a map $\partial^*:\G\times_PC^2(\g/\p,\g)\to \G\times_P C^1(\g/\p,\g)$. The Cartan geometry is \emph{normal} if $\partial^* \Omega=0$. The Cartan geometry is \emph{regular} if $\Omega_p\in C^2(\g/\p,\g)$ is contained in positive homogeneity for all $p\in \G$. There are some parabolic geometries of interest that are not normal, such as contact projective structures with nonvanishing contact torsion. However, regularity and normality are fairly standard and we will assume them throughout. The Cartan geometry is \emph{torsion-free} if $\Omega_p(X,Y)\in \p$ for all $p\in \G$ and $X,Y\in \g/\p$. Notice that torsion-freeness implies regularity.

A \emph{parabolic contact geometry} is a regular parabolic geometry modeled on a contact-graded pair $(\g,P)$. If $(\pi:\G\to M,\omega)$ is a parabolic contact geometry, define a codimension-one distribution $H=\pi_*(\omega^{-1}(\g^{-1}))$. It is a consequence of regularity that $H$ is a contact distribution. 

\subsection{Weyl Structures}
Fix a parabolic contact structure $(\pi:\G\to M,\omega)$ modeled on $(\g,P)$. Let $G_0=N_P(\g_0)$ and let $P_+=\exp(\p_+)\le P$. If $(\G\to M,\omega)$ is a parabolic geometry modeled on $(\g,P)$ let $\G_0=\G/P_+$, a $G_0$ principal bundle. A \emph{Weyl structure} is a $G_0$-equivariant section $\sigma:\G_0\to \G$ of the projection $\G\to \G_0$. The pullback $\sigma^* \omega$ is the \emph{Weyl form} associated to $\sigma$. Let $\sigma^*\omega=\omega_-+\omega_0+\omega_+$ be a decomposition into $\g_-,\g_0$, and $\p_+$-valued components. Then $\omega_-$ is the \emph{soldering form}, $\omega_0$ is the \emph{Weyl connection}, and $\omega_+$ is the \emph{Rho tensor}.

The Cartan connection $\omega$ on $\G$ induces isomorphisms $\G_0\times_{G_0}\g_{-2}\cong Q$ and $\G_0\times_{G_0}\g_{-1}\cong H$. Similarly, there are isomorphisms $\G_0\times_{G_0} \g_1\cong H^*$ and $\G_0\times_{G_0}\g_2 \cong Q^*$. The soldering form $\omega_-$ is horizontal and $G_0$-equivariant, so it corresponds to a $1$-form valued in the associated bundle $\G_0\times_{G_0}\g_-\cong Q\oplus H$, which induces an isomorphism $TM\cong Q \oplus H$. The form $\omega_0$ is a principal connection on $\G_0$, which induces vector bundle connections on associated bundles. In particular, we get a connection on $Q\oplus H$ which, when coupled with the soldering form, induces an affine connection $\nabla$ on $TM$ which we will also call the Weyl connection. The Rho tensor $\omega_+$ is horizontal and $G_0$-equivariant, so it corresponds to a $1$-form valued in the associated bundle $\G_0\times_{G_0}\p_+\cong H^*\oplus Q^*$. On the other hand, the soldering form provides an isomorphism $H^*\oplus Q^*\cong TM^*$, so the Rho tensor induces a $1$-form valued in the cotangent bundle $\P\in \Omega^1(M,T^*M)$, equivalently a $(0,2)$-tensor.

A \emph{scaling element} is an element $F\in \z(\g_0)$ such that $\ad(F)$ acts injectively on each $\g_i$ for $i\neq 0$. Suppose $\lambda:G_0\to \R^{\times}$ is a Lie group homomorphism such that $\lambda_*(X)=\langle F, X \rangle$ for $F$ a scaling element. Then the associated bundle $\mathcal{L}^{\lambda}=\G_0\times_{G_0} \R$ is called a bundle of scales. Given a Weyl structure $\sigma:\G_0\to \G$, the Weyl connection $\omega_0$ induces a connection on the line bundle $\L^{\lambda}$. This actually defines a one-to-one correspondence between Weyl structures $\sigma$ and connections on $\L^{\lambda}$ \cite{CapSlovak1}. In particular, nonvanishing global sections of $\L^{\lambda}$ induce flat connections which correspond to a subclass of Weyl structures called \emph{exact Weyl structures}.

Specializing to the case of parabolic contact geometries, consider $\lambda:G_0\to \R^{\times}$ induced by the adjoint action of $G_0$ on $\g_2$. Then $\G_0\times_{G_0} \g_2\cong Q^*$ is the bundle of contact forms for the contact distribution $H$. For $H\in \z(\g_0)$ and $X\in \g_2$, 
\[
\Ad_*(H)(X)
=\mu(H)X
=\langle H_{\mu}, H \rangle X
=\langle E, H \rangle X.
\]
Since $E$ is a scaling element, $Q^*$ is a bundle of scales. Therefore a contact form $\theta$, a global section of $Q^*$, induces a Weyl structure and thus a Weyl form. By definition, the Weyl connection induced by $\theta$ has the property that $\nabla \theta=0$ when $\theta$ is considered as a section of $Q^*$.

Let $r$ be the Reeb field. Let $\pi:TM\to H$ be the projection with kernel generated by the Reeb field. It is established in \cite{CapSlovak2} that the soldering form associated to $\theta$ induces the isomorphism $TM\cong Q\oplus H$ given by $X\mapsto (\overline{X},\pi(X))$. Therefore we may think of $Q$ as a subbundle of $TM$ by identifying it with the span of the Reeb field $r$. On the other hand, since $\theta$ is parallel relative to the connection induced on $Q^*$, it follows that $r$ is parallel relative to the connection induced on $Q$. Then we get $\nabla r=0$.

The following is Lemma 3.8(4) in \cite{CapSlovak1}.
\begin{proposition}
Let $\sigma:\G_0\to \G$ be a Weyl structure for a parabolic geometry and let $\L^{\lambda}$ be a bundle of scales. The composition of $\lambda_*$ with the curvature form of $\sigma^* \omega_0$ descends to the curvature of the induced connection on $\L^{\lambda}$.
\end{proposition}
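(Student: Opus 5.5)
The plan is to work entirely on the principal $G_0$-bundle $\G_0$ and reduce the statement to the elementary fact that curvature is additive and linear for connections with abelian structure group.

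\textbf{Step 1: the connection form on the line bundle.} As recalled above, $\omega_0$, the $\g_0$-component of $\sigma^*\omega$, is a principal connection on $\G_0$. Consider the associated principal $\R^\times$-bundle $\G_0\times_{\lambda}\R^\times$. The induced connection on it, and hence on $\L^{\lambda}=\G_0\times_{G_0}\R$, is described by the $\R$-valued form $\lambda_*\circ\omega_0$ on $\G_0$. I will check the two defining properties directly.
\begin{enumerate}[(a)]
\item It reproduces generators of fundamental vector fields: $\lambda_*(\omega_0(\zeta_X))=\lambda_*(X)$.
\item It is $G_0$-equivariant: $(r^g)^*(\lambda_*\omega_0)=\lambda_*(\Ad_{g^{-1}}\omega_0)=\lambda_*\omega_0$.
\end{enumerate}
The last equality holds because $\lambda_*\circ\Ad_g=\lambda_*$. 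This follows from differentiating $\lambda(g\exp(tX)g^{-1})=\lambda(g)\lambda(\exp tX)\lambda(g)^{-1}=\lambda(\exp tX)$, which uses that $\R^\times$ is abelian. Hence $\lambda_*\omega_0$ pulls back along $\G_0\to\G_0\times_\lambda\R^\times$ from the connection form there, and it determines the covariant derivative on sections of $\L^\lambda$ in the usual way.

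\textbf{Step 2: compare curvatures.} The curvature of $\omega_0$ is $K=d\omega_0+\frac12[\omega_0,\omega_0]$. Since $\lambda_*:\g_0\to\R$ is a Lie algebra homomorphism into an abelian Lie algebra, it kills brackets, so
\[
\lambda_*K=d(\lambda_*\omega_0)+\tfrac12\lambda_*[\omega_0,\omega_0]=d(\lambda_*\omega_0).
\]
For the abelian group $\R^\times$, the curvature of the connection form $\lambda_*\omega_0$ is exactly $d(\lambda_*\omega_0)$, because the bracket term vanishes there as well.

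\textbf{Step 3: descent.} $K$ is horizontal and $G_0$-equivariant with values in $\Ad$. Therefore $\lambda_*K$ is horizontal and, by the invariance $\lambda_*\circ\Ad_g=\lambda_*$ from Step 1, actually $G_0$-invariant. A horizontal, invariant, real-valued form on a principal bundle is the pullback of a unique $2$-form on $M$. That $2$-form is, by Step 2, the curvature of the induced connection on $\L^\lambda$: with respect to the trivial adjoint action of $\R^\times$, its curvature is an honest real-valued $2$-form on $M$.

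There is no serious obstacle; the only point requiring care is the invariance $\lambda_*\circ \Ad_g=\lambda_*$, which ensures that the result is a genuine $2$-form on $M$ rather than a section of a twisted bundle.
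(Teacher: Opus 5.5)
Your argument is correct. The paper does not prove this statement itself; it quotes it as Lemma 3.8(4) of \cite{CapSlovak1}. Your verification is the standard argument for that fact: $\lambda_*$ is a homomorphism into an abelian Lie algebra, so it kills $\frac{1}{2}[\omega_0,\omega_0]$, and $\lambda_*\circ\Ad_g=\lambda_*$ makes $\lambda_*K$ basic, hence a genuine $2$-form on $M$. You could shorten it slightly by skipping the auxiliary principal $\R^\times$-bundle. Use directly that the linear connection induced on the associated line bundle $\L^{\lambda}$ has curvature $\lambda_*\circ K$, and that $\End(\L^{\lambda})$ is canonically trivial.
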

It follows that if $\sigma$ is an exact Weyl structure, the curvature of the associated Weyl connection $\sigma^*\omega_0$ is valued in $\ker \lambda_*\subset \g_0$. Specializing to parabolic contact structures and using $\g_{-2}\cong \g_2^*$, we get the following corollary.

\begin{corollary}\label{cor::ExactWeyl}
Fix a parabolic contact geometry with the bundle of scales $Q^*$ as above. If $\sigma$ is an exact Weyl structure, the curvature $R$ of $\sigma^*\omega_0$ has output in $\g_0$ acting trivially on $\g_{-2}$.
\end{corollary}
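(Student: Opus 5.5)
The plan is to combine the preceding Proposition (Lemma 3.8(4) of \cite{CapSlovak1}) with the fact that $\theta$ is a global nonvanishing section of the bundle of scales $Q^*=\L^{\lambda}$, where $\lambda$ comes from the adjoint action of $G_0$ on $\g_2$. I will argue in three steps: the induced connection on $Q^*$ is flat, so $\lambda_*\circ R=0$; then $\ker\lambda_*$ is the part of $\g_0$ acting trivially on $\g_2$; and finally, by duality, that is the same as acting trivially on $\g_{-2}$.

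\textbf{Step 1: flatness.} For an exact Weyl structure $\sigma$ determined by $\theta$, the induced connection on $\L^{\lambda}=Q^*$ is by construction the one for which $\theta$ is parallel, as noted above ($\nabla\theta=0$). A line bundle connection with a nonvanishing parallel section is flat: its curvature $2$-form $F$ satisfies $F(X,Y)\theta=\nabla_X\nabla_Y\theta-\nabla_Y\nabla_X\theta-\nabla_{[X,Y]}\theta=0$, and $\theta\neq 0$ everywhere. By the Proposition, $\lambda_*\circ R$ descends to this curvature, where $R$ is the curvature of $\sigma^*\omega_0$. Since $\lambda_*$ is a linear functional on $\g_0$, this gives $\lambda_*(R(\xi,\eta))=0$ for all tangent vectors. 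Hence $R$ takes values in $\ker\lambda_*$.

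\textbf{Step 2: identifying $\ker\lambda_*$.} Here $\lambda_*:\g_0\to\R$ is the derivative of the action on the one-dimensional space $\g_2$. For $A\in\g_0$ and $X\in\g_2$ we have $[A,X]=\lambda_*(A)X$. So $A\in\ker\lambda_*$ exactly when $\ad(A)$ acts trivially on $\g_2$.

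\textbf{Step 3: passing to $\g_{-2}$.} The invariant form pairs $\g_2$ nondegenerately with $\g_{-2}$. Invariance gives $\langle[A,Y],X\rangle=-\langle Y,[A,X]\rangle$ for $Y\in\g_{-2}$ and $X\in\g_2$. Thus $\ad(A)$ acts on $\g_{-2}\cong\g_2^*$ by the scalar $-\lambda_*(A)$, and it vanishes there iff $\lambda_*(A)=0$. Combined with Step 2, $R$ takes values in the subalgebra of $\g_0$ acting trivially on $\g_{-2}$, which is the claim.

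The only point needing care is Step 1's use of the Proposition. I expect this to be routine: it requires matching the curvature $d\omega_0+\frac12[\omega_0,\omega_0]$ on $\G_0$ with the curvature of the associated line bundle connection, and the Proposition states exactly that.
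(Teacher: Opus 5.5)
Your proposal is correct and follows the paper's argument. Exactness makes the induced connection on $Q^*=\L^{\lambda}$ flat, so Lemma 3.8(4) of \cite{CapSlovak1} forces $R$ into $\ker\lambda_*$, the part of $\g_0$ acting trivially on $\g_2$, and $\g_{-2}\cong\g_2^*$ transfers this to $\g_{-2}$; your Steps 1 and 3 just spell out details the paper leaves implicit (flatness from a parallel nonvanishing section, and the scalar $-\lambda_*(A)$ on $\g_{-2}$ via invariance of the form).
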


\subsection{Levi Bracket}
Fix a parabolic contact geometry. The Levi bracket $\L\in \bigwedge^2 \g_{-1}^*\otimes \g_{-2}$ is $G_0$-invariant, so it determines a section of $\G_0\times_{G_0} (\bigwedge^2 \g_{-1}^* \otimes \g_{-2})\cong \bigwedge^2 H^* \otimes Q$. It is a consequence of regularity that the section $\L$ of $\bigwedge^2 H^* \otimes Q$ is given by $\L(X,Y)=\overline{[X,Y]}$. Let $\nabla$ be some connection on $Q\oplus H$ associated to a principal connection $\gamma$ on $\G_0$, for example the Weyl connection. Since $\L\in \bigwedge^2 \g_{-1}^*\otimes \g_{-2}$ is $\g_0$-invariant, the section $\L\in \bigwedge^2 H^* \otimes Q$ is preserved by the connection associated to $\gamma$, which is the one induced by $\nabla$. We have proven the following.
\begin{proposition}\label{prop::PreservesLevi}
    If $\nabla$ is a connection on $Q\oplus H$ induced by a principal connection on $\G_0$, then $\nabla_Z\L=0$.
\end{proposition}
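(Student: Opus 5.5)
The statement to prove is Proposition~\ref{prop::PreservesLevi}: for any connection $\nabla$ on $Q\oplus H$ induced by a principal connection $\gamma$ on $\G_0$, we have $\nabla_Z\L=0$. The idea is that a $G_0$-invariant tensor on the model, viewed as a section of an associated bundle, is a constant equivariant function and is therefore parallel for every induced connection.

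\textbf{Step 1: set up the associated bundle.} Write $V=\bigwedge^2\g_{-1}^*\otimes\g_{-2}$, with the representation of $G_0$ induced from the adjoint action. Sections of $\G_0\times_{G_0}V\cong\bigwedge^2H^*\otimes Q$ correspond to $G_0$-equivariant functions $f:\G_0\to V$. The principal connection $\gamma$ induces a connection on this bundle by the usual formula
\[
\widetilde{\nabla_Z s}=\tilde Z\cdot f+\gamma(\tilde Z)\cdot f,
\]
where $\tilde Z$ is any lift of $Z$ to $\G_0$ and $\gamma(\tilde Z)\in\g_0$ acts on $V$ by the infinitesimal representation. This connection coincides with the one induced on $\bigwedge^2H^*\otimes Q$ by $\nabla$ on $Q\oplus H$, because both are the tensorial extension of the same principal connection. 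The isomorphisms $H\cong\G_0\times_{G_0}\g_{-1}$ and $Q\cong\G_0\times_{G_0}\g_{-2}$ are compatible with tensor operations, so this identification holds.

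\textbf{Step 2: identify the equivariant function of $\L$.} By regularity, the section $\L(X,Y)=\overline{[X,Y]}$ corresponds to the constant function $f\equiv\L_{\g}$, the Lie bracket $\g_{-1}\times\g_{-1}\to\g_{-2}$. This is the one point that needs care. At $u\in\G_0$, lift $u$ to $\hat u\in\G$. The soldering identification sends $X,Y\in H$ to $\omega_{-1}(\xi),\omega_{-1}(\eta)$ for lifts $\xi,\eta$, and sends $\overline{[X,Y]}$ to $\omega_{-2}([\xi,\eta])$. Regularity means that the $\g_{-2}$-component of the curvature evaluated on $\g_{-1}$ inputs vanishes, so $d\omega(\xi,\eta)+[\omega(\xi),\omega(\eta)]$ has no $\g_{-2}$ part. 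Hence
\[
-\omega_{-2}([\xi,\eta])=-[\omega_{-1}(\xi),\omega_{-1}(\eta)]_{-2}.
\]
The bracket of the vector fields can be taken on $\G$ because $\xi,\eta$ are sections of $\omega^{-1}(\g^{-1})$, so the $\omega$-terms from differentiating along them drop out of the $\g_{-2}$ component. I take this step as given, since the paper states it as a consequence of regularity.

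\textbf{Step 3: conclude.} Since $f$ is constant, $\tilde Z\cdot f=0$. The Lie bracket is $\g_0$-invariant: for $A\in\g_0$, the Jacobi identity gives
\[
(A\cdot\L_\g)(X,Y)=[A,[X,Y]]-[[A,X],Y]-[X,[A,Y]]=0,
\]
so $\gamma(\tilde Z)\cdot f=0$ as well. Therefore $\nabla_Z\L=0$. Equivalently, on the bundle level, $\nabla_Z(\L(X,Y))=\L(\nabla_ZX,Y)+\L(X,\nabla_ZY)$.

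The main obstacle is the identification in Step 2 of the geometric Levi bracket with the algebraic one. It relies on regularity, which is already recorded in the paper. Everything else is formal.
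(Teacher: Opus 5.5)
Your proposal is correct and is essentially the paper's argument: the algebraic Levi bracket $\bigwedge^2\g_{-1}\to\g_{-2}$ is $\g_0$-invariant, so the corresponding section of $\bigwedge^2H^*\otimes Q$ is parallel for every connection induced by a principal connection on $\G_0$, and regularity identifies that section with $\L(X,Y)=\overline{[X,Y]}$. You give more detail than the paper, which only asserts the regularity identification and the parallelism. Specifically, you write out the formula for the induced connection on equivariant functions and the $\g_{-2}$-component curvature computation behind the regularity step.
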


\subsection{Curvature of Weyl Forms}
Since the Lie bracket in $\g$ is $G_0$-equivariant, it induces a Lie bracket on sections of $\G_0\times_{G_0}\g$, called the \emph{algebraic bracket}. The algebraic bracket of two sections $U$ and $V$ is written $\{U,V\}$.
In the presence of a soldering form, vector fields correspond to sections of $\G_0\times_{G_0}\g_-$. Thus we can take the algebraic bracket of two vector fields. As another example, define $\End_0(H)=\G_0\times_{G_0}\g_0$. Since $H\cong \G_0\times_{G_0}\g_{-1}$ and $Q\cong \G_0\times_{G_0}\g_{-2}$, the action of $\g_0$ on $\g_{-1}$ and $\g_{-2}$ induces an action by sections of $\End_0(H)$ on sections of $H$ and sections of $Q$.

Taking just the soldering form and Weyl connection, $\omega_-+\omega_0$ is a Cartan connection on $\G_0$. Let $T$ and $R$ be the torsion and curvature parts of this connection. These are both horizontal and $G_0$-equivariant, so they can be thought of as $2$-forms valued in $\G_0\times_{G_0}\g_-$ and $\G_0\times_{G_0}\g_0$, respectively. While the curvature $R$ agrees with the curvature of the Weyl connection $\nabla$, the torsion $T$ is not equal to the standard affine connection torsion. The difference is an algebraic bracket term. That is,
\[
T(X,Y)=\nabla_X Y-\nabla_Y X -[X,Y]+\{X,Y\},
\]
meaning that it agrees with the torsion of $\nabla$ when the parabolic model geometry is $|1|$-graded. Given the Rho tensor $\P\in \Omega^1(M,T^*M)$, the \emph{Cotton--York tensor} $Y\in \Omega^2(M,T^*M)$ is defined by
\begin{equation}\label{eqn::CottonYork}
Y(X,Y)=d^{\nabla}\P(X,Y)+\P(\{X,Y\})+\{\P(X),\P(Y)\}
\end{equation}
where $d^{\nabla}$ is the covariant exterior derivative induced by $\nabla$. For purposes of computation, this can be simplified a bit by the following.
\begin{proposition}\label{prop::CottonYork}
$Y(X,Y)=\nabla_X \P (Y)-\nabla_Y \P(X)+\P(T(X,Y))
+\{\P(X),\P(Y)\}$.
\end{proposition}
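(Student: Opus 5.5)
The plan is to substitute the torsion identity into the definition \eqref{eqn::CottonYork} and check that the two expressions agree term by term. The definition of the Cotton--York tensor is
\[
Y(X,Y)=d^{\nabla}\P(X,Y)+\P(\{X,Y\})+\{\P(X),\P(Y)\}.
\]
The covariant exterior derivative of the $T^*M$-valued $1$-form $\P$ is, by the standard formula,
\[
d^{\nabla}\P(X,Y)=\nabla_X(\P(Y))-\nabla_Y(\P(X))-\P([X,Y]).
\]
Here $\nabla$ acts on $T^*M$, equivalently on $Q^*\oplus H^*$ through the soldering form.

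The first step is to pass from the derivative of the section $\P(Y)$ to the covariant derivative of the tensor $\P$. The Leibniz rule gives
\[
\nabla_X(\P(Y))=(\nabla_X\P)(Y)+\P(\nabla_XY),
\]
and the same holds with $X$ and $Y$ interchanged. Substituting these, $d^{\nabla}\P(X,Y)$ becomes
\[
(\nabla_X\P)(Y)-(\nabla_Y\P)(X)+\P\bigl(\nabla_XY-\nabla_YX-[X,Y]\bigr).
\]

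The second step combines this with the remaining term $\P(\{X,Y\})$ of \eqref{eqn::CottonYork}. The argument of $\P$ then becomes
\[
\nabla_XY-\nabla_YX-[X,Y]+\{X,Y\},
\]
which is exactly the torsion $T(X,Y)$ of the Cartan connection $\omega_-+\omega_0$ on $\G_0$, as stated just before \eqref{eqn::CottonYork}. The term $\{\P(X),\P(Y)\}$ is untouched, so the claimed formula follows.

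The only point requiring care is consistency of conventions. The connection induced on $T^*M$ must be compatible with the one on $TM\cong Q\oplus H$ used to define $\nabla_XY$. This holds because both are induced by $\omega_0$ on associated bundles, and the soldering isomorphism is $\nabla$-parallel by construction. Given that, the Leibniz rule applies to the pairing of $\P$ with vector fields, and the remaining manipulations are purely formal.
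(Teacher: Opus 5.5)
Your proposal is correct and follows essentially the same route as the paper's proof: expand $d^{\nabla}\P$, apply the Leibniz rule to move the derivative onto $\P$, and recognize $\nabla_XY-\nabla_YX-[X,Y]+\{X,Y\}$ as $T(X,Y)$. Your remark on the compatibility of the connections on $TM$ and $T^*M$ via the soldering form is a reasonable extra justification that the paper leaves implicit.
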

\begin{proof}
Summing just the first two terms of \eqref{eqn::CottonYork},
    \begin{align*}
    d^{\nabla}\P(X,Y)+\P(\{X,Y\})
    &=\nabla_X(\P(Y))
    -\nabla_Y(\P(X))-\P([X,Y])
    +\P(\{X,Y\})\\
    &=\nabla_X\P(Y)
    -\nabla_Y\P(X)
    +\P(\nabla_X Y-\nabla_Y X-[X,Y]+\{X,Y\})\\
    &=\nabla_X\P(Y)-\nabla_Y\P(X)+\P(T(X,Y)).
    \end{align*}
\end{proof}
Following \cite{CapSlovak2}, the curvature $\kappa$ of $\sigma^*\omega$ can be expressed in terms of these tensors.
\begin{equation}\label{eqn::WeylCurvature}
\kappa
=T+R+Y+\partial \P.
\end{equation}
In particular, $R,Y$, and $\P$ may only be nonzero in homogeneity at least $2$, so $\kappa^{(i)}=T^{(i)}$ for $i\le 1$. If $\kappa$ is regular, this implies $T^{(i)}=0$ for $i\le 0$, while if $\kappa$ is torsion-free then $T^{(1)}=\kappa^{(1)}=0$. As a special case of the approach outlined in \cite{CapSlovak2}, normality of $\kappa$ allows us to compute the Rho tensor $\P$ in the following way.
\begin{proposition}\label{prop::RhoTensor}
Let $M$ be a normal parabolic contact structure with a Weyl form induced by a Weyl structure. Then for $i=3,4$,
\[
\P^{(i)}=-\square^{-1}\partial^*(T+R+Y)^{(i)}.
\]
\end{proposition}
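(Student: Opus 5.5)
The plan is to read off the homogeneity components of the normalization condition $\partial^*\kappa=0$, using the decomposition \eqref{eqn::WeylCurvature}, $\kappa=T+R+Y+\partial\P$. For each $i$, $\partial^*$ preserves homogeneity, so taking the homogeneity-$i$ part of $\partial^*\kappa=0$ gives
\[
\partial^*\partial \P^{(i)}=-\partial^*(T+R+Y)^{(i)}.
\]
The goal is to turn this into a formula for $\P^{(i)}$ by applying $\square^{-1}$. That requires two things: that $\partial^*\P^{(i)}=0$, so that $\partial^*\partial\P^{(i)}=\square\P^{(i)}$, and that $\square$ is invertible on the relevant homogeneity components of $C^1(\g_-,\g)$.

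\textbf{Step 1: homogeneities.} The Rho tensor $\P$ is a $\p_+$-valued $1$-cochain on $\g_-$. It therefore lives in homogeneities $2$ (the map $\g_{-1}\to\g_1$), $3$ (the maps $\g_{-1}\to\g_2$ and $\g_{-2}\to\g_1$) and $4$ (the map $\g_{-2}\to\g_2$). So $i=3,4$ are exactly the components lying above the lowest one.

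\textbf{Step 2: $\partial^*\P=0$.} The codifferential of a $1$-cochain is $\partial^*\phi=\sum_i[X^i,\phi(X_i)]$ with $X^i\in\p_+$. For $\phi=\P$, each bracket $[X^i,\P(X_i)]$ lies in $[\p_+,\p_+]\subset\g^2$, and $\g^2$ is spanned by $\g_2$. In homogeneity $3$ this is $[\g_1,\g_2]=0$ and $[\g_2,\g_1]=0$. In homogeneity $4$ it is $[\g_2,\g_2]=0$. Hence $\partial^*\P^{(i)}=0$ for $i=3,4$, and $\square\P^{(i)}=\partial^*\partial\P^{(i)}$. (In homogeneity $2$ the bracket $[\g_1,\g_1]\subset\g_2$ can be nonzero. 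This is why the statement is restricted to $i=3,4$; the homogeneity-$2$ piece is treated separately.)

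\textbf{Step 3: invertibility of $\square$.} I would use Kostant's theorem: $\square$ acts on each $\g_0$-isotypic component by a scalar determined by Casimir eigenvalues, and $\ker\square\cong H^1(\g_-,\g)$. For contact gradings, $H^1$ in positive homogeneity vanishes, except in the $A_l$ and $C_l$ cases, where it is concentrated in homogeneity $1$ or in low degrees and never in homogeneities $3$ or $4$. Showing that $\square$ has no kernel on $C^1(\g_-,\g)^{(3)}$ and $C^1(\g_-,\g)^{(4)}$ is the main obstacle.

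For homogeneity $4$ the space is one-dimensional, spanned by $\g_{-2}^*\otimes\g_2$. A direct computation with $X\in\g_{-2}$ and its dual $X^*\in\g_2$ should give $\square$ as a nonzero scalar, which I would check using $E=H_\mu$ and $|\mu|^2=2$.

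For homogeneity $3$ the space is $(\g_{-1}^*\otimes\g_2)\oplus(\g_{-2}^*\otimes\g_1)$. Both summands are isomorphic to $\g_1$ as $\g_0$-modules, so $\square$ acts as a $2\times 2$ matrix on each copy. I would compute this matrix using the Levi bracket identities and check that its determinant is nonzero; this is the computation deferred to Section~\ref{app:1Cochains}.

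\textbf{Step 4: conclusion.} With $\square$ invertible on these components and $\partial^*\P^{(i)}=0$, applying $\square^{-1}$ to the displayed identity gives $\P^{(i)}=-\square^{-1}\partial^*(T+R+Y)^{(i)}$ for $i=3,4$.
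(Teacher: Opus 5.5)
Your proposal is correct and follows the paper's proof: take homogeneity components of $\partial^*\kappa=0$, observe that $\partial^*\P^{(i)}$ lies in $\g_i=0$ for $i=3,4$ (so $\square\P^{(i)}=\partial^*\partial\P^{(i)}$), and invert $\square$ in homogeneity $\ge 2$. The one difference is how invertibility is justified. The paper gets it immediately from $\ker\square\cong H^1(\g_-,\g)$ together with the vanishing of $H^1$ in homogeneity $\ge 2$, which holds because $\g$ has no $\sl_2$ ideal. You instead check the eigenvalues directly, which also works: by Kostant's formula $\square$ is the scalar $-(2n+1)$ on the whole homogeneity-$3$ isotypic piece, so your $2\times 2$ matrix is just $-(2n+1)\Id$, and it is $-3n$ in homogeneity $4$. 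Two small corrections: your description of $H^1$ is off, since for the $A_l$ contact grading it sits in homogeneity $0$ and only the $C_l$ case has a homogeneity-$1$ piece; and the homogeneity-$3$ summands are isomorphic to $\g_1\otimes\g_2$ rather than $\g_1$, though only their isomorphism to each other matters.
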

\begin{proof}
For homogeneity reasons, $\partial^*\P^{(i)}=0$. Using equation \eqref{eqn::WeylCurvature} and the fact that $\kappa$ is normal,
\begin{align*}
    0&=\partial^*\kappa^{(i)}\\
    &=\partial^*(T+R+Y+\partial \P)^{(i)}\\
    &=\partial^*(T+R+Y)^{(i)}+\square \P^{(i)}-\partial \partial^* \P^{(i)}\\
    &=\partial^*(T+R+Y)^{(i)}+\square \P^{(i)}.
\end{align*}
Since $\g$ does not contain a simple ideal isomorphic to $\sl_2(\R)$ or $\sl_2$, $H^1(\g_-,\g)^2=0$, where the superscript $2$ denotes the components in homogeneity at least $2$. Since $H^1(\g_-,\g)^2$ is isomorphic to $(\ker \square)^2\subset C^1(\g_-,\g)$, the Kostant Laplacian $\square$ is invertible in homogeneity at least $2$ and the result follows.
\end{proof}

\begin{remark}
The result is still true for a Weyl form induced by a contact form in the $i=2$ case because it is still true that $\partial^* \P^{(2)}=0$. However, proving this is a little more subtle and we do not need the result here.
\end{remark}

Note that if $\g$ is contact-graded then the torsion $T$ may be nonzero in homogeneity $1$ or $2$, the curvature $R$ may be nonzero in homogeneity $2$ or $3$, the Cotton--York tensor $Y$ may be nonzero in homogeneity $3,4$ or $5$, and the Rho tensor $\P$ may be nonzero in homogeneity $2,3$ or $4$.

The operations $\partial$ and $\partial^*$ preserve homogeneity. Therefore we can use Proposition~\ref{prop::RhoTensor} to compute the Rho tensor $\P$ by proceeding one homogeneity at a time. The basic idea is that $\nabla$ is assumed to be fixed, so $T$ and $R$ are known. Knowing $\P$ in a given homogeneity $i$ determines the Cotton--York tensor $Y$ in homogeneity $i+1$, which determines $\P$ in homogeneity $i+1$.

\section{Contact-Graded Lie Algebras}
\subsection{Lie Bracket}
There are enough structural similarities between different contact-graded Lie algebras that it is possible to get a fairly uniform understanding of the Lie bracket. The one major exception appears to be the bracket $\g_{-1}\times \g_1\to \g_0$, which seems to depend on the ``internal'' structure of $\g$. The identities proven in this section are useful in the derivation of the Rho tensor $\P$ in Section 4.

The action of an element $H\in \g_0$ on $\g_{-2}$ or $\g_2$ depends only on the inner product $\langle H,E \rangle$ with the grading element, although we do not use this fact. We identify the Levi subalgebra $\g_0$ with endomorphisms of $\g_{-1}$ by the left adjoint action, and $\g_1\cong \g_{-1}^*$ as $\g_0$ modules, so if $A\in \g_0$ and $\phi\in \g_1$ then $[A,\phi]=-\phi\circ A$. Fix $\psi\in \g_{-2}$ and $\phi\in \g_2$ such that $\langle \psi, \phi \rangle=1$. Define $L:\g_{-1}\times \g_{-1}\to \R$ by $L(X,Y)=\phi([X,Y])$. Let $\flat:\g_{-1}\to \g_1$ be given by $X^{\flat}(Y)=L(X,Y)$ and let $\sharp:\g_1\to \g_{-1}$ be its inverse.
\begin{proposition}
\begin{enumerate}[(a)]
    \item $[\phi,\psi]=E$.
    \item For $\alpha\in \g_1$, $[\psi, \alpha]=\alpha^{\sharp}$.
    \item For $X\in \g_{-1}$, $[ \phi,X]=X^{\flat}$.
    \item For $\alpha,\beta\in \g_1$, $[\alpha,\beta]=\beta(\alpha^{\sharp})\phi$.
\end{enumerate}
\end{proposition}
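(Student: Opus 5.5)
The plan is to prove all four identities the same way. Each bracket lands in a graded piece whose pairing with a complementary piece is nondegenerate, so I will identify it by pairing against that piece and using invariance of $\langle\cdot,\cdot\rangle$. Throughout I use $\g_{1}\cong\g_{-1}^*$ via $\alpha(Y)=\langle \alpha,Y\rangle$, consistent with the identification $\g_i\cong\g_{-i}^*$ already fixed.

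\textbf{Preliminary fact.} I first need to know how $\g_0$ acts on the line $\g_{-2}$. After complexifying, $\g_{-2}$ is the root space of $-\mu$. So for $H\in\h$ we have $[H,\psi]=-\mu(H)\psi=-\langle H_\mu,H\rangle\psi=-\langle E,H\rangle\psi$, using $E=H_\mu$. For the semisimple part, the one-dimensional module $\g_{-2}$ of $\g_0^{ss}$ is trivial. Also $\langle E,[A,B]\rangle=\langle [E,A],B\rangle=0$ for $A,B\in\g_0$, so $E\perp\g_0^{ss}$. Since $\g_0=\h+\g_0^{ss}$, this gives $[A,\psi]=-\langle E,A\rangle\psi$ for all $A\in\g_0$, and dually $[A,\phi]=\langle E,A\rangle\phi$.

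\textbf{(a) and (c).} For (a), $[\phi,\psi]\in\g_0$, and $\langle\cdot,\cdot\rangle$ is nondegenerate on $\g_0$. For $A\in\g_0$, invariance and the preliminary fact give
\[
\langle[\phi,\psi],A\rangle=\langle\phi,[\psi,A]\rangle=\langle E,A\rangle\langle\phi,\psi\rangle=\langle E,A\rangle,
\]
so $[\phi,\psi]=E$. For (c), $[\phi,X]\in\g_1$, and for $Y\in\g_{-1}$ we have $\langle[\phi,X],Y\rangle=\langle\phi,[X,Y]\rangle=L(X,Y)$, which says exactly that $[\phi,X]=X^\flat$.

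\textbf{(b).} This is the step needing the most care. Here $[\psi,\alpha]\in\g_{-1}$, and I determine it through $L$. For $Y\in\g_{-1}$ the Jacobi identity gives $[[\psi,\alpha],Y]=[\psi,[\alpha,Y]]-[\alpha,[\psi,Y]]$. The second term vanishes because $[\psi,Y]\in\g_{-3}=0$. The element $[\alpha,Y]$ lies in $\g_0$, so by the preliminary fact $[\psi,[\alpha,Y]]=\langle E,[\alpha,Y]\rangle\psi$. Therefore
\[
L([\psi,\alpha],Y)=\langle E,[\alpha,Y]\rangle=\langle[E,\alpha],Y\rangle=\alpha(Y),
\]
so $[\psi,\alpha]^\flat=\alpha$, i.e. $[\psi,\alpha]=\alpha^\sharp$.

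\textbf{(d).} Since $[\alpha,\beta]\in\g_2=\R\phi$, its coefficient is $\langle[\alpha,\beta],\psi\rangle$. By invariance this equals $\langle\alpha,[\beta,\psi]\rangle$, and by (b) $[\beta,\psi]=-\beta^\sharp$, so the coefficient is $-\alpha(\beta^\sharp)$. Writing $\alpha=(\alpha^\sharp)^\flat$ gives $-\alpha(\beta^\sharp)=-L(\alpha^\sharp,\beta^\sharp)=L(\beta^\sharp,\alpha^\sharp)=\beta(\alpha^\sharp)$ by antisymmetry of $L$. Hence $[\alpha,\beta]=\beta(\alpha^\sharp)\phi$.

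The main obstacle is keeping sign conventions consistent in the preliminary fact and in (b). In particular, the sign relating $\flat$ to the pairing, and the sign of the $\g_0$-action on $\g_{\pm2}$, must match $E=H_\mu$ with $|\mu|^2=2$. I will double-check these by testing on $E$ itself: $[E,\psi]=-2\psi$ agrees with $-\langle E,E\rangle\psi=-2\psi$.
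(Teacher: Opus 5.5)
Your proof is correct and follows essentially the same route as the paper: you identify each bracket by pairing it against the complementary graded piece using invariance of $\langle\cdot,\cdot\rangle$, and in (b) the Jacobi identity together with $[\psi,\g_{-1}]=0$ does the work. The differences are only organizational. You record up front that $A\in\g_0$ acts on $\g_{\pm 2}$ by $\pm\langle E,A\rangle$, whereas the paper shows $[\phi,\psi]\perp\g_0^{ss}$ and then pairs with $\h$. You also get (c) directly from $\langle[\phi,X],Y\rangle=\langle\phi,[X,Y]\rangle=L(X,Y)$, which is slightly shorter than the paper's derivation of (c) from (a), (b) and Jacobi.
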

\begin{proof}
\begin{enumerate}[(a)]
\item
    Let $X\in \g_0^{ss}$. Then
    \[
    \langle X,[\phi,\psi]\rangle
    =\langle [X,\phi], \psi \rangle
    =0
    \]
    because $\g_0^{ss}=[\g_0,\g_0]$ acts trivially on the $1$-dimensional $\g_2$. It follows that 
    \[
    [\phi,\psi]\in (\g_0^{ss})^{\perp}=\z(\g_0)\subset \h.
    \]
    Next, let $H\in \h$. Then
    \begin{align*}
    \langle H, [\phi, \psi] \rangle
    &=\langle [H, \phi], \psi \rangle\\
    &=\mu(H) \langle \phi, \psi \rangle\\
    &=\mu(H)\\
    &=\langle H, H_{\mu} \rangle.
    \end{align*}
    It follows from the nondegeneracy of our invariant bilinear form on $\h$ that $[\phi,\psi]=H_{\mu}=E$.
    \item Using $[\psi,X]=0$, 
    \begin{align*}
    L([\psi,\alpha],X)
    &=\langle \phi, [[\psi,\alpha],X]\rangle\\
    &=\langle \phi, [\psi, [\alpha, X]] \rangle\\
    &=\langle [\phi, \psi], [\alpha,X] \rangle\\
    &=\langle E,[\alpha, X]\rangle\\
    &=\langle [E,\alpha], X \rangle\\
    &=\langle \alpha, X \rangle.
    \end{align*}
    \item Let $\alpha\in \g_1$. Using $[\alpha,\phi]=0$,
    \begin{align*}
  [\phi, \alpha^{\sharp}]
  &=[\phi, [\psi, \alpha]]\\
  &=[[\phi,\psi], \alpha]\\
 &=[E,\alpha]\\
  &=\alpha.
  \end{align*} 
  \item 
\[
\langle \psi, [\alpha, \beta]\rangle
=\langle [\psi, \alpha], \beta \rangle
=\beta(\alpha^{\sharp}).
\]
    \end{enumerate}
\end{proof}

Another important point is that when computing algebraic brackets of sections of $\G_0\times_{G_0}\g$ we will always work in a gauge in which $r$ corresponds to $\psi\in \g_{-2}$ and $\theta$ corresponds to $\phi\in \g_2$.
\subsection{Ricci Calculus}
We assume the Einstein summation convention throughout. Recall that trace commutes with covariant derivative on any vector bundle. For example, if $T$ is a tensor preserving $H$ then $\nabla_0T_{i}^{\ i}=\nabla_0(T_i^{\ i})$ even though $i$ only varies over a basis for $H$. 

One has to be careful when using an alternating tensor like $L_{ij}$ to raise and lower indices as opposed to a symmetric tensor like a Riemannian metric. Let $L^{ij}$ be the inverse matrix to $L_{ij}$. To lower an index, let $T_j=L_{ij}T^i$. To raise an index, let $T^j=L^{ij}T_i$. Note that $L^{ij}$ is not the result of starting with $L_{ij}$ and raising both indices, since this would give $-L^{ij}$. The trace of $T$ satisfies
\[
T_i^{\ i}
=L^{ji}T_{ij}
=-L^{ij}T_{ij}
=-T^j_{\ j}
=-T^i_{\ i}.
\]
Given an endomorphism $T:H\to H$, we say that it preserves $L$ if
\[
L(T(X),Y)+L(X,T(Y))=0
\]
for $X,Y\in H$. An endomorphism $T_i^{\ j}$ preserves $L$ as an endomorphism if and only if $T_{ij}=T_{ji}$. To see why, suppose that $T$ preserves $L$. Then
\[
0=L(T(E_i),E_j)+L(E_i,T(E_j))
=L(T(E_i),E_j)-L(T(E_j),E_i)
=T_{ij}-T_{ji}.
\]
The converse is similar. We compute some of the quantities from the previous subsection in coordinates. If $X\in \g_{-1}$, then
\[
(X^{\flat})_j=X^{\flat}(X_j)=L(X,X_j)=L_{ij}X^i.
\]
Therefore, if $\alpha\in \g_1$ then
\[
(\alpha^{\sharp})^j=L^{ij}\alpha_i.
\]
Finally, if $\alpha,\beta\in \g_1$ then
\[
[\alpha,\beta]
=\beta(\alpha^{\sharp})\phi
=\beta_j(\alpha^{\sharp})^j\phi
=L^{ij}\alpha_i \beta_j\phi.
\]
Next,
\[
T((X_i)^{\flat})=X_i^{\flat}(T)=L(X_i,T)=-T_i.
\]
Likewise,
\[
T((X^i)^{\sharp})
=T_j((X^i)^{\sharp})^j
=T_jL^{kj}(X^i)_k
=L^{ij}T_j
=-T^i.
\]
  
\section{Rho Tensor}
\subsection{Weyl Torsion}
For $X\in H$, recall that the Weyl torsion is defined by $\tau(X)=\nabla_rX-[r,X]$. This is related to the torsion in homogeneity $2$ by $\tau(X)=T^{(2)}(r,X)$. By Proposition~\ref{prop::PreservesLevi}, the Weyl connection $\nabla$ preserves $\L$. Since $\nabla$ preserves $\theta:Q\to \R$, it must also preserve $L=\theta\circ \L$.
\begin{proposition}
The endomorphism $\tau$ preserves  $L:H\times H\to \R$.
\end{proposition}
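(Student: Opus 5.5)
We must show $L(\tau X,Y)+L(X,\tau Y)=0$ for $X,Y\in H$. The plan is to differentiate $L=\theta\circ\L$ along the Reeb field in two ways: once using that $\nabla$ preserves $L$, and once using that the Lie derivative $\mathcal{L}_r\theta$ vanishes. The difference between $\nabla_r$ and $\mathcal{L}_r$ on $H$ is exactly $\tau$, so comparing the two will produce the claimed identity.

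\textbf{Step 1: the Weyl connection preserves $L$.} As noted just before the statement, Proposition~\ref{prop::PreservesLevi} gives $\nabla\L=0$, and $\nabla\theta=0$ since $\theta$ defines the exact Weyl structure. Hence $\nabla_r L=0$, that is,
\[
r\bigl(L(X,Y)\bigr)=L(\nabla_rX,Y)+L(X,\nabla_rY).
\]

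\textbf{Step 2: the Reeb flow preserves $L$.} By definition of the Reeb field, $\theta(r)=1$ and $i_r d\theta=0$, so Cartan's formula gives $\mathcal{L}_r\theta=d(\theta(r))+i_rd\theta=0$. Consequently $\mathcal{L}_r d\theta=0$ as well. For $X,Y\in H$ we have $L(X,Y)=\theta([X,Y])=-d\theta(X,Y)$, with the sign depending on the convention but irrelevant here. Since $[r,X],[r,Y]\in H$ (because $\mathcal{L}_r\theta=0$), we obtain
\[
r\bigl(L(X,Y)\bigr)=L([r,X],Y)+L(X,[r,Y]).
\]

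\textbf{Step 3: subtract.} Subtracting the identity of Step 2 from that of Step 1 gives
\[
0=L(\nabla_rX-[r,X],Y)+L(X,\nabla_rY-[r,Y])=L(\tau X,Y)+L(X,\tau Y),
\]
which is the claim.

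\textbf{Main obstacle.} The only delicate point is Step 2: checking that $\mathcal{L}_rL=0$ on $H$ and that $\nabla_rX\in H$. The first follows from $\mathcal{L}_r\theta=0$ as shown. The second holds because the soldering form identifies $H$ with $\G_0\times_{G_0}\g_{-1}$, which is preserved by the Weyl connection, so $\nabla_r$ maps sections of $H$ to sections of $H$. Alternatively, one could argue algebraically: $\tau(X)=T^{(2)}(r,X)$, and the relation $\partial^*$-type constraints would force $\tau$ into $\g_0$-like form. The direct argument above is simpler and is the one I would use.
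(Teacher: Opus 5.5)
Your proof is correct and follows the paper's argument: both use $\nabla L=0$ to rewrite $L(\nabla_rX,Y)+L(X,\nabla_rY)$ as $r\,L(X,Y)$, and then cancel this against the bracket terms. Your Step 2, phrased via $\mathcal{L}_r\theta=0$ and hence $\mathcal{L}_r d\theta=0$, repackages the paper's explicit use of the Jacobi identity $\theta([[r,X],Y]+[X,[r,Y]])=\theta([r,[X,Y]])$ together with $d\theta(r,[X,Y])=0$.
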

\begin{proof}
Combining $\nabla L=0$ with the Jacobi identity,
\begin{align*}
L(\tau(X),Y)+L(X,\tau(Y))
&=L(\nabla_r X-[r,X],Y)+L(X,\nabla_r Y-[r,Y])\\
&=rL(X,Y)-\theta([[r,X],Y]+[X,[r,Y]])\\
&=rL(X,Y)-\theta([r,[X,Y]])\\
&=r\theta([X,Y])-[X,Y]\theta(r)-\theta([r,[X,Y]])\\
&=d\theta(r,[X,Y])\\
&=0.
\end{align*}
\end{proof}
This proposition is equivalent to the assertion that $\tau$ preserves the Levi bracket $\L:H\times H\to Q$.
Immediate consequences are that the Weyl torsion $\tau$ is traceless, that is $\tau_i^{\ i}=0$, and the index-lowered version is symmetric, that is $\tau_{ij}=\tau_{ji}$.
\subsection{Contracted Bianchi Identity}
In this section we prove a curvature identity that is well-known for CR manifolds but which actually generalizes to all torsion-free parabolic contact geometries. Several other well-known curvature identities for CR manifolds can be proven in this generality by tracing either the algebraic or differential Bianchi identities.

Let $\widetilde{T}(X,Y)=\nabla_XY-\nabla_YX-[X,Y]$ be the affine connection torsion. Then $\widetilde{T}(X,Y)=T(X,Y)-\{X,Y\}$. Since $\widetilde{T}^{(1)}=0$ we have that $\widetilde{T}_{ij}^{\ \ k}=\widetilde{T}_{0i}^{\ \ 0}=0$. Equivalently, if $X,Y\in H$ then $\widetilde{T}(X,Y)\in \langle r \rangle$ and $\widetilde{T}(r,X)\in H$. The notation $\sum_{XYZ}$ denotes the sum over cyclic permutations of the variables $X,Y,Z$. The following identity is well-known and may be found, for example, in \cite{KobayashiNomizu}.
\begin{proposition}[Algebraic Bianchi Identity]
For any affine connection $\nabla$,
\[
\sum_{XYZ}R(X,Y)Z=\sum_{XYZ}\nabla_X \widetilde{T}(Y,Z)+\widetilde{T}(\widetilde{T}(X,Y),Z).
\]
\end{proposition}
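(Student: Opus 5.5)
The plan is to reduce the identity to the first Bianchi identity for a torsion-free connection by building a torsion-free companion connection out of $\nabla$ and $\widetilde{T}$.

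\textbf{Step 1: the companion connection.} Define
\[
D_XY=\nabla_XY-\tfrac{1}{2}\widetilde{T}(X,Y).
\]
Since $\widetilde{T}$ is antisymmetric, $D$ is an affine connection. Its torsion is
\[
\widetilde{T}(X,Y)-\tfrac{1}{2}\widetilde{T}(X,Y)+\tfrac{1}{2}\widetilde{T}(Y,X)=0,
\]
so $D$ is torsion-free.

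\textbf{Step 2: the Bianchi identity for $D$.} For a torsion-free connection, the cyclic sum of the curvature vanishes. This follows directly from the Jacobi identity for vector fields: expand $\sum_{XYZ}(D_XD_YZ-D_YD_XZ-D_{[X,Y]}Z)$, use $D_YZ-D_ZY=[Y,Z]$ to rewrite each pair of terms, and the result is $\sum_{XYZ}[X,[Y,Z]]=0$.

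\textbf{Step 3: comparing curvatures.} Write $S=-\tfrac{1}{2}\widetilde{T}$, so $D=\nabla+S$. The standard formula gives
\[
R^D(X,Y)Z=R(X,Y)Z+(\nabla_XS)(Y,Z)-(\nabla_YS)(X,Z)+S(X,S(Y,Z))-S(Y,S(X,Z))+S(\widetilde{T}(X,Y),Z).
\]
Take the cyclic sum and treat each group of terms separately.

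\begin{itemize}
\item The terms $(\nabla_XS)(Y,Z)-(\nabla_YS)(X,Z)$ cyclically sum to $2\sum(\nabla_XS)(Y,Z)=-\sum(\nabla_X\widetilde{T})(Y,Z)$, using antisymmetry of $S$.
\item The quadratic terms $S(X,S(Y,Z))-S(Y,S(X,Z))$ cyclically sum to $2\sum S(X,S(Y,Z))=\tfrac{1}{2}\sum\widetilde{T}(X,\widetilde{T}(Y,Z))=-\tfrac{1}{2}\sum\widetilde{T}(\widetilde{T}(Y,Z),X)$. After reindexing the cycle, this equals $-\tfrac{1}{2}\sum\widetilde{T}(\widetilde{T}(X,Y),Z)$.
\item The last term contributes $-\tfrac{1}{2}\sum\widetilde{T}(\widetilde{T}(X,Y),Z)$.
\end{itemize}

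\textbf{Step 4: conclusion.} Setting $\sum R^D=0$ and adding up the contributions gives
\[
\sum R(X,Y)Z=\sum(\nabla_X\widetilde{T})(Y,Z)+\widetilde{T}(\widetilde{T}(X,Y),Z).
\]
This is the stated identity, reading $\nabla_X\widetilde{T}(Y,Z)$ as the covariant derivative of the tensor $\widetilde{T}$.

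The main place errors can creep in is the sign and factor bookkeeping in the quadratic terms. In particular, one has to be consistent with the convention $R(X,Y)=[\nabla_X,\nabla_Y]-\nabla_{[X,Y]}$.
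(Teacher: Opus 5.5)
Your argument is correct. The connection $D=\nabla-\tfrac12\widetilde{T}$ is torsion-free, and your formula for $R^D$ is right. The cyclic sums also work out as you say: the derivative terms give $-\sum_{XYZ}(\nabla_X\widetilde{T})(Y,Z)$, and the two quadratic sources each contribute $-\tfrac12\sum_{XYZ}\widetilde{T}(\widetilde{T}(X,Y),Z)$.

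The paper gives no proof of its own, only a citation to Kobayashi--Nomizu. The usual textbook argument is a single direct expansion rather than a change of connection. One rewrites $\sum_{XYZ}R(X,Y)Z$ as $\sum_{XYZ}\bigl(\nabla_X(\nabla_YZ-\nabla_ZY)-\nabla_{[Y,Z]}X\bigr)$ and substitutes $\nabla_YZ-\nabla_ZY=\widetilde{T}(Y,Z)+[Y,Z]$ and $\nabla_X[Y,Z]-\nabla_{[Y,Z]}X=\widetilde{T}(X,[Y,Z])+[X,[Y,Z]]$. The double brackets drop out by Jacobi. One then expands $\nabla_X(\widetilde{T}(Y,Z))=(\nabla_X\widetilde{T})(Y,Z)+\widetilde{T}(\nabla_XY,Z)+\widetilde{T}(Y,\nabla_XZ)$. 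The remaining terms $\widetilde{T}(\nabla_XY,Z)+\widetilde{T}(Y,\nabla_XZ)+\widetilde{T}(X,[Y,Z])$ recombine cyclically into $\sum_{XYZ}\widetilde{T}(\nabla_XY-\nabla_YX-[X,Y],Z)=\sum_{XYZ}\widetilde{T}(\widetilde{T}(X,Y),Z)$.

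Your Step 2 is exactly this computation with $\widetilde{T}=0$. So your proof runs the direct argument only in the torsion-free case and then transports it through the curvature formula for $\nabla+S$.

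The direct route is shorter and needs no auxiliary formula. Yours makes the origin of each torsion term transparent: the linear term comes from $\nabla S$, and the quadratic term is split evenly between $S(X,S(Y,Z))$ and $S(\widetilde{T}(X,Y),Z)$. It also applies to any two connections that differ by a known tensor.

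In both versions, $\nabla_X\widetilde{T}(Y,Z)$ must be read as $(\nabla_X\widetilde{T})(Y,Z)$, as you note. This is also how the paper uses it when it writes $\nabla_i\widetilde{T}_{0j}^{\ \ i}$ in the proof of the contracted Bianchi identity.
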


\begin{proposition}[Contracted Bianchi Identity]\label{prop::ContractedBianchi}
\[
R_{i0j}^{\ \ \ i}=\nabla_i\tau_{j}^{\ i}.
\]
\end{proposition}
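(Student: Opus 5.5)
The plan is to apply the Algebraic Bianchi Identity to two contact directions and the Reeb field, take the $H$-component, and then contract. Concretely, I would take $X,Y\in H$ and $Z=r$ and evaluate both sides, using the torsion-free hypothesis to pin down $\widetilde{T}$ completely.

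\textbf{Step 1: identify $\widetilde{T}$.}
\begin{itemize}
\item For $X,Y\in H$, we have $\widetilde{T}(X,Y)\in\langle r\rangle$. Its $Q$-component is $-\theta([X,Y])$, so $\widetilde{T}(X,Y)=-L(X,Y)\,r$.
\item Since $\nabla r=0$, we get $\widetilde{T}(r,X)=\nabla_rX-[r,X]=\tau(X)$.
\end{itemize}

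\textbf{Step 2: evaluate the Bianchi identity with $Z=r$.}
On the left, $R(X,Y)r=0$ because $\nabla r=0$, so the left side is $R(Y,r)X+R(r,X)Y$. On the right, I read $\nabla_X\widetilde{T}(Y,Z)$ as the covariant derivative of the tensor $\widetilde{T}$, and the terms are as follows.
\begin{itemize}
\item The derivative terms give $-(\nabla_X\tau)(Y)+(\nabla_Y\tau)(X)$.
\item The term $(\nabla_r\widetilde{T})(X,Y)=-(\nabla_rL)(X,Y)\,r$ vanishes, since $\nabla L=0$ by Proposition~\ref{prop::PreservesLevi} and $\nabla\theta=0$.
\item The term $\widetilde{T}(\widetilde{T}(X,Y),r)$ is a multiple of $\widetilde{T}(r,r)=0$.
\item The remaining quadratic torsion terms are $\widetilde{T}(\tau X,Y)$ and $\widetilde{T}(-\tau Y,X)$. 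Both lie in $\langle r\rangle$, since $\tau$ preserves $H$.
\end{itemize}
Taking the $H$-component of both sides therefore gives
\[
R(Y,r)X-R(X,r)Y=(\nabla_Y\tau)(X)-(\nabla_X\tau)(Y).
\]

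\textbf{Step 3: contract.}
Set $Y=E_i$ and $X=E_j$, take the $E_i$-component of the output, and sum over $i$.
\begin{itemize}
\item The first term on the left becomes $R_{i0j}^{\ \ \ i}$.
\item The second term on the left becomes the trace over $H$ of the endomorphism $R(E_j,r)\in\End_0(H)$. By Corollary~\ref{cor::ExactWeyl}, $R$ takes values in the part of $\g_0$ acting trivially on $\g_{-2}$. Such elements preserve $L$ exactly, so they lie in $\mathfrak{sp}(L)$ and are traceless. Hence this term vanishes.
\item On the right, the second term gives $-(\nabla_j\tau)_i^{\ i}=-\nabla_j(\tau_i^{\ i})=0$, because trace commutes with $\nabla$ and $\tau$ is traceless.
\item The first term on the right gives $\nabla_i\tau_j^{\ i}$, which is the claimed identity.
\end{itemize}

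\textbf{Main obstacle.} The delicate point is bookkeeping rather than any single hard step. First, the index conventions for $R_{i0j}^{\ \ \ i}$ and $\nabla_i\tau_j^{\ i}$ must be matched to which slot of $R$ and of $\tau$ is contracted. Second, the sign of the $\widetilde{T}(r,\cdot)$ terms must be tracked carefully. I would fix the convention $R_{abc}^{\ \ \ d}E_d=R(E_a,E_b)E_c$ and $(\nabla_a\tau)(E_b)=\nabla_a\tau_b^{\ d}E_d$ at the outset and recheck each sign under it. I would also justify the tracelessness of $R(E_j,r)$ carefully via the corollary. If that turned out to be subtle, I would instead argue it from the fact that $\nabla$ preserves $L$.
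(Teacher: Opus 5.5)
Your proposal is correct and follows essentially the same route as the paper. Both apply the algebraic Bianchi identity with one slot equal to $r$ and trace over a contact slot, using that $R$ is traceless by Corollary~\ref{cor::ExactWeyl}, that $\tau$ is traceless, and that the quadratic torsion terms lie in $\langle r\rangle$. The differences are only cosmetic: you take the $H$-component first, contract over $H$ only, and kill $(\nabla_r\widetilde{T})(X,Y)$ using $\nabla L=0$, whereas the paper traces over all of $TM$ and checks that the extra terms vanish.
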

\begin{proof}
Apply the algebraic Bianchi identity to the Weyl connection $\nabla$. As a consequence of Corollary~\ref{cor::ExactWeyl}, if $X,Y\in TM$ then $R(X,Y)$ acts trivially on $Q$. Therefore $R(X,Y)$ preserves $L$, so is traceless as an endomorphism of $H$. Let $Y=r$ and $Z\in H$. Tracing the left hand side of the Bianchi identity over $X$ yields
    \[
    (R_{i0j}^{\ \ \ i}+R_{00j}^{\ \ \ 0})+(R_{0ji}^{\ \ \ i}+R_{0j0}^{\ \ \ 0})
    +(R_{ji0}^{\ \ \ i}+R_{j00}^{\ \ \ 0})
    =R_{i0j}^{\ \ \ i}
    \]
    since all terms except the first vanish. The term with the covariant derivative is equal to
    \[
    (\nabla_i\widetilde{T}_{0j}^{i}+\nabla_0\widetilde{T}_{0j}^{0})
    +(\nabla_0\widetilde{T}_{ji}^i+\nabla_0\widetilde{T}_{j0}^0)
    +(\nabla_j\widetilde{T}_{i0}^{i}+\nabla_i\widetilde{T}_{00}^{0}).
    \]
    Since $\nabla r=0$, the first term is equal to $\nabla_i \tau_j^{i\ }$. Also, $\nabla_j \widetilde{T}_{i0}^{\ \ i}=-\nabla_j \tau_i^{\ i}=0$. It is straightforward to see that all other terms vanish. To calculate the final term in the Bianchi identity, first suppose $X=r$. Then
    \[
    \widetilde{T}(\widetilde{T}(r,r),Z)+\widetilde{T}(\widetilde{T}(Z,r),r)+\widetilde{T}(\widetilde{T}(r,Z),r)=0
    \]
    since $\widetilde{T}$ is alternating.
    Otherwise, if $X\in H$ then
    \[
    \widetilde{T}(\widetilde{T}(X,r),Z)+\widetilde{T}(\widetilde{T}(Z,X),r)+\widetilde{T}(\widetilde{T}(r,Z),X)\in \langle r \rangle
    \]
    so this contributes nothing to the trace.
\end{proof}
\subsection{Homogeneity $2$}
The lowest homogeneity component that occurs in the Rho tensor $\P$ corresponds to maps $\g_{-1}\to \g_1$ of homogeneity $2$. The highest homogeneity component corresponds to maps $\g_{-2}\to \g_2$ of homogeneity $4$. To determine $\P$ in homogeneity $2$ we apply a shortcut. Let $X\in H$. Since $\kappa$ is assumed torsion-free, the torsion component $\kappa^{(2)}(r,X)$ vanishes. Consequently,
    \begin{align*}
0
&=\kappa^{(2)}(r,X)\\
&=(T^{(2)}+\partial \P^{(2)})(r,X)\\
&=T^{(2)}(r,X)+\{r,\P^{(2)}(X)\}\\
&=T^{(2)}(r,X)+\P^{(2)}(X)^{\sharp}.
    \end{align*}
Therefore
\[
\P^{(2)}(X)
=-T^{(2)}(r,X)^{\flat}
=-\tau(X)^{\flat}.
\]
That is,
\[
\P_{ij}=-\tau_{ij}.
\]

\subsection{Homogeneity $3$}
\begin{remark}
\cite{CapGuo} suggests an alternate strategy leading to a proof that $\P_{0i}=2\P_{i0}$ while avoiding the contracted Bianchi identity proven above. An analysis of the harmonic curvature components determined by Kostant's Borel--Weil--Bott shows that the output is always in a root space, and is therefore orthogonal to the Cartan subalgebra. In the absence of torsion, this condition is $P$-invariant, and so the improved Bianchi identity stated in \cite{Guo} shows that the output of curvature is always orthogonal to the Cartan subalgebra and, in particular, to $E$. By our choice of bundle of scales and the exactness of the Weyl structure, the output of $R$ is also orthogonal to $E$. Considering equation \eqref{eqn::WeylCurvature} gives $0=\langle E,\partial \P^{(3)}(r,X)\rangle$ which leads to $\P_{0i}=2\P_{i0}$. 
\end{remark}
Since $\kappa$ is torsion-free, the homogeneity one component $T^{(1)}=\kappa^{(1)}$ of the torsion tensor vanishes. In particular, $T(X,Y)=0$ for $X,Y\in H$ because this torsion component necessarily has homogeneity $0$ or $1$. However, $T^{(2)}$ is not in general equal to $\kappa^{(2)}$, which is why this term corresponding to the Weyl torsion $\tau$ may be nonvanishing. In addition, the $\{\P(X),\P(Y)\}$ term that appears in Proposition~\ref{prop::CottonYork} can only be valued in $Q$ corresponding to $\g_2$. Then
    \begin{align*}
    Y^{(3)}_{ijk}
    &=\nabla_{i}\P_{jk}-\nabla_{j}\P_{ik}\\
    &=-\nabla_{i}\tau_{jk}+\nabla_j\tau_{ik}.
    \end{align*}
Next, $R^{(3)}$ is of the form $Q\times H\to \End_0(H)$, corresponding to maps $\g_{-2}\times \g_{-1}\to \g_0$. By Corollary~\ref{cor::ExactWeyl}, the output in $\g_0$ is valued in the part that acts trivially on $\g_2$. Therefore in the Kostant codifferential, the contraction between the $\g_0$ part and the $\g_{-2}^*\cong \g_2$ part vanishes. Then $\partial^*R^{(3)}$ corresponds to maps $\g_{-2}\to \g_{1}$, meaning that it is a map $Q\to H^*$. Then for $X\in H$,
\[
\partial^*R^{(3)}(X)=0.
\]
The tensor $Y^{(3)}$ is a map $H\times H\to H^*$ corresponding to maps $\g_{-1}\times \g_{-1}\to \g_1$. To compute $\partial^*Y^{(3)}_{i0}$, the $H\to Q^*$ part of the Kostant codifferential corresponding to $\g_{-1}\to \g_2$, the relevant term in the Kostant codifferential is the one corresponding to contraction of the output $\g_1$ with the inputs $\g_{-1}^*\cong \g_1$. Using the fact that the Weyl torsion $\tau$ is traceless,
    \begin{align*}
    \partial^*Y^{(3)}_{i0}
    &=\{X^j,Y^{(3)}(X_i,X_j)\}_0\\
    &=L^{jk}Y_{ijk}\\
    &=Y_{i\ j}^{\ j}\\
    &=-\nabla_i\tau^{j}_{\ j}+\nabla^j\tau_{ij}\\
    &=\nabla^j\tau_{ij}.
    \end{align*}
Since $\P^{(3)}_{i0}$ is a map $H\to Q^*$ corresponding to maps $\g_{-1}\to \g_2$, Proposition~\ref{prop::KostantEigenvalues} says that $\square$ acts by $-(2n+1)$. Then
\[
\P^{(3)}_{i0}
=-\square^{-1}\partial^*(R^{(3)}+Y^{(3)})_{i0}
=\frac{1}{2n+1}\nabla^j\tau_{ij}.
\]
The homogeneity $3$ component of the curvature tensor $R^{(3)}$ is a map $Q\times H\to \End_0(H)$ corresponding to maps $\g_{-2}\times \g_{-1}\to \g_0$. To compute $\partial^*R^{(3)}_{0i}$, the relevant part of the Kostant codifferential contracts the input $\g_{-1}^*\cong \g_1$ with the output $\g_0$. Using Proposition~\ref{prop::ContractedBianchi},
\begin{align*}
\partial^*R^{(3)}_{0j}
&=\{X^i,R^{(3)}(r,X_i)\}_j\\
&=X^i\circ R^{(3)}(r, X_i)(X_j)\\
&=R_{0ij}^{\ \ \  i}\\
&=-R_{i0j}^{\ \ \  i}\\
&=-\nabla_i\tau_{j}^{\ i}\\
&=\nabla^i \tau_{ji}.
\end{align*}
Next,
\begin{align*}
    \partial^*Y^{(3)}_{0j}
    &=-\frac{1}{2}Y^{(3)}(\{X^i,r\},X_i)(X_j)\\
    &=\frac{1}{2}Y^{(3)}((X^i)^{\sharp},X_i)(X_j)\\
    &=\frac{-1}{2}(Y^{(3)})^i_{\ ij}\\
    &=\frac{1}{2}(\nabla^i\tau_{ij}-\nabla_i\tau^i_{\ j})\\
    &=\nabla^i\tau_{ij}.
\end{align*}
It follows that
\begin{align*}
\partial^*(R^{(3)}+Y^{(3)})_{0j}=2\nabla^i\tau_{ij}.
\end{align*}
Since $\P^{(3)}_{0i}$ is a map $Q\to H^*$ corresponding to maps $\g_{-2}\to \g_{1}$, Proposition~\ref{prop::KostantEigenvalues} says that $\square$ acts by $-(2n+1)$. Then
\begin{align*}
\P^{(3)}_{0i}
&=-\square^{-1}\partial^*(R^{(3)}+Y^{(3)})_{0i}\\
&=\frac{2}{2n+1}\nabla^j\tau_{ij}.
\end{align*}

\subsection{Homogeneity $4$}
To compute the Cotton--York tensor component $Y_{0ij}^{(4)}$, observe that $T_{0i}^{\ \ k}=\tau_i^{\ k}$. Then
\begin{align*}
    Y^{(4)}_{0ij}
    &=\nabla_0\P_{ij}
    -\nabla_{i}\P_{0j}
    +\tau_i^{\ k}\P_{kj}\\
    &=-\nabla_0 \tau_{ij}
    -\frac{2}{2n+1}\nabla_{i}\nabla^k\tau_{kj}
    -\tau_i^{\ k}\tau_{kj}.
\end{align*}
Next,
\begin{align*}
Y^{(4)}_{ij0}
&=\nabla_i\P_{j0}-\nabla_j\P_{i0}+\{\P^{(2)}(X_i),\P^{(2)}(X_j)\}_0\\
&=\nabla_i \P_{j0}-\nabla_j\P_{i0}
+L^{kl}\tau_{ik}\tau_{jl}\\
&=\frac{1}{2n+1}(\nabla_{i}\nabla^k\tau_{kj}
-\nabla_{j}\nabla^k\tau_{ki})+(\tau\circ \tau)_{ij}.
\end{align*}
Now that we know the homogeneity $4$ component of the Cotton--York tensor, we can compute its Kostant codifferential, necessarily a map $Q\to Q^*$ corresponding to maps $\g_{-2}\to \g_2$. We evaluate
\begin{align*}
    \partial^*Y^{(4)}_{00}
    &=\{X^i,Y^{(4)}(r,X_i)\}
    -\frac{1}{2}\sum Y^{(4)}(\{X^i,r\},X_i).
\end{align*}
Because $\tau$ is traceless,
\begin{align*}
\{X^i,Y^{(4)}(r,X_i)\}
&=L^{ij}Y_{0ij}\\
&=-Y_{0i}^{\ \ i}\\
&=\nabla_0\tau_i^{\ i}
+\frac{2}{2n+1}\nabla_i\nabla^j\tau_j^{\ i}
+\tau_i^{\ k}\tau_k^{\ i}\\
&=\frac{-2}{2n+1}\nabla^i\nabla^j\tau_{ij}
+\tr(\tau\circ \tau).
\end{align*}
To compute the second term,
\begin{align*}
Y^{(4)}(\{X^i,r\},X_i)_0
&=-Y^{(4)}((X^i)^{\sharp},X_i)_0\\
&=(Y^{(4)})^i_{\ i0}\\
&=\frac{1}{2n+1}(\nabla^i\nabla^j\tau_{ji}-\nabla_i\nabla^j\tau_j^{\ i})+(\tau\circ\tau)^{i}_{\ i}\\
&=\frac{2}{2n+1}\nabla^i\nabla^j\tau_{ij}-\tr(\tau\circ \tau).
\end{align*} 
Combining these two terms,
\begin{align*}
    \partial^*Y^{(4)}(r)
    &=\frac{-3}{2n+1}\nabla^i\nabla^j\tau_{ij}
    +\frac{3}{2} \tr(\tau \circ \tau).
\end{align*}
 There is no $R^{(4)}$ term because it would necessarily be an alternating map $Q\times Q\to \End_0(H)$ and $Q$ is $1$-dimensional. Since $\P^{(4)}$ is a map $Q\to Q^*$ corresponding to maps $\g_{-2}\to \g_2$, Proposition~\ref{prop::KostantEigenvalues} says that $\square$ acts by $-3n$. Therefore
\[
\P^{(4)}_{00}
=-\square^{-1}\partial^*Y^{(4)}_{00}
=\frac{1}{n}\left(\frac{-1}{2n+1}\nabla^i\nabla^j\tau_{ij}+\frac{1}{2}\tr(\tau\circ \tau)\right),
\]
finishing the proof of Theorem~\ref{thm::RhoTensor}.

\section{Tanaka--Webster Connection}
\subsection{Ricci Tensor}
In our discussion of the Tanaka--Webster connection we attempt to build on the approach in \cite{CapSlovak2} but use some slightly different conventions. While that text uses the identification $\partial:\g_0\to \End(\g_-)$, the right action of $\g_0$ on $\g_-$, we found it more convenient to identify $\g_0$ with endomorphisms of $\g_-$ by its left action. Then for $A\in \g_0$ we can write $\partial A=-A$. 

Our convention for the Ricci element $\widehat{\Ric}\in \g_0$ also differs from the one adopted in \cite{CapSlovak2} by a minus sign. Thus, for $\{\cdot,\cdot\}:\bigwedge^2 \g_{-1}^*\cong \bigwedge^2 \g_1\to \g_2\cong \g_{-2}^*$ induced by the bracket, we define 
\[
\widehat{\Ric}=-\{\cdot,\cdot\}\widehat{R}^{(2)}(r)=\partial^*\widehat{R}^{(2)}(r)
\]
where $\widehat{R}^{(2)}\in \bigwedge^2 \g_{-1}^*\otimes \g_0$ is the curvature of $\widehat{\nabla}$ in homogeneity $2$. Because $\widehat{\nabla}r=0$, the curvature tensor $\widehat{R}$ is valued in the part of $\End_0(H)$ acting trivially on $Q$. Thus $\widehat{\Ric}$ acts trivially on $Q$. It follows that $\widehat{\Ric}$ preserves the Levi bracket on $H$ and, in particular, acts tracelessly on $H$. Acting on $H$ from the left,
\begin{align*}
\widehat{\Ric}_j^{\ k}
&=\partial^*\widehat{R}(r)_{j}^{\ k}\\
&=-\frac{1}{2}\widehat{R}(\{X^i,r\},X_i)\\
&=\frac{1}{2}\widehat{R}((X^i)^{\sharp},X_i)_j^{\ k}\\
&=-\frac{1}{2}\widehat{R}^{i \ \ k}_{\ i j}\\
&=\frac{1}{2}\widehat{R}_{i \ j}^{\ i \ k}.
\end{align*}

\subsection{Definition}
While it is possible to write down defining conditions for the Weyl connection $\nabla$ associated to underlying data, these conditions are generally not as simple as those defining the Tanaka--Webster connection.
\begin{remark}
    It is shown in \cite{CapSlovak2} that it is possible to write explicit formulas for the Tanaka--Webster connection in integrable Legendrian contact, CR, and contact projective geometries. These structures are related to the $A_l$ and $C_l$ series of simple Lie algebras. It is interesting to ask whether something similar is possible in torsion-free Lie contact geometries, related to the $B_l$ and $D_l$ series. Such geometries are necessarily flat.
\end{remark}

 To define the Tanaka--Webster connection, we will present a version of Proposition 5.2.12 of \cite{CapSlovak2} while reframing it slightly. Consider the soldering form $\omega_-$ on $\G_0$ induced by a contact form on a normal parabolic contact structure. Any principal connection $\gamma$ on $\G_0$ 
induces a torsion tensor $d\omega_-+[\gamma,\omega_-]$
relative to the soldering form. This induces a $2$-form valued in $\G_0\times_{G_0}\g_-\cong Q\oplus H \cong TM$.
If the affine connection induced by that principal connection is denoted $\widehat{\nabla}$, the induced torsion tensor is given by $\widehat{T}(X,Y)=\widehat{\nabla}_X Y-\widehat{\nabla}_Y X -[X,Y]+\{X,Y\}$. The difference between two principal connections $\gamma$ and $\gamma'$ is a horizontal $1$-form valued in $\g_0$, which therefore has strictly positive homogeneity. Therefore a different choice of principal connection only affects the torsion $\widehat{T}$ in positive homogeneity.

In particular, $\widehat{T}$ agrees with $T$, the torsion induced by the Weyl connection in nonpositive homogeneity. So for $i\le 0$ we have $\widehat{T}^{(i)}=T^{(i)}=0$. Suppose now that $\widehat{\nabla}\theta=0$. Since $\widehat{\nabla}$ is associated to a principal connection for $\G_0$, it automatically preserves $\ker \theta=H$. For the same reason $\widehat{\nabla}$ must preserve the line spanned by $r$ so compatibility with $\theta$ is equivalent to $\widehat{\nabla}r=0$. For $X\in H$, observe that
\[
0
=d\theta(r,X)
=r\theta(X)-X\theta(r)-\theta([r,X])
=-\theta([r,X]),
\]
so $[r,X]\in H$. It follows that
\[
\widehat{T}(r,X)
=\widehat{\nabla}_r X-\widehat{\nabla}_Xr-[r,X]\in H,
\]
so the $Q\times H\to Q$ component of $\widehat{T}$ vanishes. Therefore the entire homogeneity $1$ component of $\widehat{T}$ is $\widehat{T}^{(1)}|_{H\times H}$ given by 
\[
\widehat{T}^{(1)}(X,Y)=\widehat{\nabla}_X Y-\widehat{\nabla}_Y X-\pi([X,Y]).
\]

Now we consider the homogeneity $2$ component of $\widehat{T}$. Because $\widehat{T}$ is a $2$-form we have $\widehat{T}(r,r)=0$, so the $Q\times Q\to Q$ component vanishes. Therefore the entire homogeneity $2$ part is given by the $Q\times H\to H$ component of $\widehat{T}$. If we define $\widehat{\tau}(X)=\widehat{\nabla}_r X-[r,X]$, then $\widehat{\tau}(X)=\widehat{T}^{(2)}(r,X)$.

Finally, observe that it is a consequence of Kostant's Borel--Weil--Bott that $(\ker \square)_0\subset \g_{-1}^*\otimes \g_{-1}$, meaning that harmonic $1$-cochains in homogeneity $0$ correspond to maps $H\to H$. In particular, the $Q\to Q$ component is always $0$. We give the following reformulation of Proposition 5.2.12 of \cite{CapSlovak2}.
\begin{proposition}[Tanaka--Webster Connection]
Fix a normal parabolic contact geometry $(\G,\omega)$ and a contact form $\theta$ inducing a soldering form. There exists a unique affine connection on $TM$ induced by a principal connection on $\G_0$ for which
\begin{itemize}
\item $\widehat{\nabla}\theta=0$,
\item $\widehat{T}^{(1)}|_{H\times H}\in \ker \square \subset \bigwedge^2 H^* \otimes H$,
\item $\widehat{\tau}\in \ker \square\subset H^*\otimes H$.
\end{itemize}
Furthermore, if $(\G,\omega)$ is torsion-free then $\widehat{T}^{(1)}=0$.
\end{proposition}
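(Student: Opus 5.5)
Existence and uniqueness will follow by splitting the space of principal connections on $\G_0$ into its homogeneous parts and using the invertibility of the Kostant differential $\partial$ off harmonic pieces. Fix any principal connection $\gamma$ on $\G_0$. Every other one is $\gamma+\Phi$, with $\Phi$ a horizontal $G_0$-equivariant $1$-form valued in $\g_0$, i.e. a section of $T^*M\otimes\End_0(H)$. This decomposes by homogeneity into $\Phi^{(1)}\in H^*\otimes\End_0(H)$ and $\Phi^{(2)}\in Q^*\otimes\End_0(H)$.

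Replacing $\gamma$ by $\gamma+\Phi$ changes the torsion by $\widehat{T}\mapsto \widehat{T}+[\Phi,\omega_-]$, which in cochain terms is $\widehat{T}+\partial\Phi$ up to sign. Here $\partial:C^1(\g_-,\g)\to C^2(\g_-,\g)$ restricted to $\g_0$-valued cochains maps $\g_{-1}^*\otimes\g_0\to\bigwedge^2\g_{-1}^*\otimes\g_{-1}$ in homogeneity $1$ and $\g_{-2}^*\otimes\g_0\to\g_{-2}^*\otimes\g_{-1}^*\otimes\g_{-1}$ in homogeneity $2$.

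Step one imposes $\widehat{\nabla}\theta=0$. By the discussion preceding the statement, this is equivalent to the $\g_0$-values of $\Phi$ lying in the part acting trivially on $\g_{-2}$ (the subalgebra $\ker\lambda_*$), provided we start from the Weyl connection, which already preserves $\theta$. So we restrict to such $\Phi$. With $\theta$ parallel, the earlier computation shows that the homogeneity-$1$ torsion is concentrated in $\bigwedge^2H^*\otimes H$ and the homogeneity-$2$ torsion is $\widehat{\tau}\in H^*\otimes H$.

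Step two is homogeneity $1$, and it is the main obstacle. We need a unique $\Phi^{(1)}$, modulo nothing, with $\widehat{T}^{(1)}+\partial\Phi^{(1)}\in\ker\square$. Using the Hodge decomposition $C^2=\im\partial\oplus\ker\square\oplus\im\partial^*$, it suffices to show three things:
\begin{enumerate}[(i)]
\item $\widehat{T}^{(1)}$ has no $\im\partial^*$ component. Normality gives $\partial^*\kappa^{(1)}=0$ and $\kappa^{(1)}=T^{(1)}$; since Weyl and $\widehat{\ }$ torsions differ by $\partial\Phi$, the $\im\partial^*$ part is independent of the choice.
\item $\partial$ is injective on $\g_{-1}^*\otimes\ker\lambda_*$. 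This holds because $H^1(\g_-,\g)$ in homogeneity $1$ vanishes for contact gradings (Kostant's Borel--Weil--Bott), except for the $\g_0$ components already excluded.
\item The image of $\partial$ restricted to $\theta$-preserving $\Phi$ still covers the full $\im\partial$ part of the relevant component.
\end{enumerate}
Point (iii) needs care because we have discarded the $E$-direction of $\g_0$. I would check that $\partial(\alpha\otimes E)$ has a nonzero $\g_{-2}$-valued component, so dropping it loses exactly the $Q$-valued part, which is already forced to vanish.

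Step three treats homogeneity $2$ identically. Here $\widehat{\tau}$ changes by $-\Phi^{(2)}(r)$ as an endomorphism, since $\partial A=-A$. This map $Q^*\otimes\ker\lambda_*\to H^*\otimes H$ is injective, and its image is the space of $L$-preserving endomorphisms. The earlier proposition, adapted to $\widehat{\tau}$, shows that $\widehat{\tau}$ preserves $L$. Since $\ker\square$ in homogeneity $0$ sits inside $\g_{-1}^*\otimes\g_{-1}$ and is complementary to $\im\partial\cap(\g_{-1}^*\otimes\g_{-1})=\ker\lambda_*$, there is a unique choice of $\Phi^{(2)}$ making $\widehat{\tau}$ harmonic. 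Also, $\Phi^{(2)}$ does not affect homogeneity $1$, so the steps decouple.

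Finally, in the torsion-free case $\kappa^{(1)}=0$, so the Weyl connection has $T^{(1)}=0$. Then $\Phi^{(1)}=0$ satisfies the harmonicity condition in step two, and uniqueness forces $\widehat{T}^{(1)}=0$.
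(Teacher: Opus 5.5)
There is a genuine gap in your step (ii): it is false when $\g^{\C}\cong C_l$ (contact projective structures), so your uniqueness argument in homogeneity one fails there. In that case $\ker\lambda_*=\g_0^{ss}=\mathfrak{sp}(\g_{-1},L)$. Take any $\Phi\in\g_{-1}^*\otimes\ker\lambda_*$ with $L(\Phi(X)Y,Z)$ totally symmetric. Then $\Phi(X)Y=\Phi(Y)X$, so $\partial\Phi=0$, and the kernel of $\partial$ is $S^3\g_{-1}^*\neq 0$. This is exactly the homogeneity-one component of $H^1(\g_-,\g)$ that Kostant's theorem gives for $C_l$: since $\alpha_1$ is short, $s_1(\mu)=\mu-2\alpha_1$, and $2\alpha_1-\mu$ is a root of $\g_0$. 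It is also why a contact projective structure is not determined by $\G_0\to M$.

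Concretely, if $\widehat{\nabla}$ satisfies the three conditions, so does $\widehat{\nabla}+\Phi$ for any section $\Phi$ of $S^3H^*\subset H^*\otimes\End_0(H)$ with $\Phi(r)=0$. Indeed $\widehat{\nabla}\theta$, $\widehat{T}^{(1)}$ and $\widehat{\tau}$ are all unchanged. So for $C_l$, uniqueness cannot be extracted from the listed conditions and $\G_0$ alone; it needs input from the Cartan connection. The paper gets this input by quoting Theorem 5.2.11 of \cite{CapSlovak2}: $\widehat{\nabla}$ agrees with the Weyl connection of $\theta$ in contact directions, hence $\widehat{T}^{(1)}=T^{(1)}=\kappa^{(1)}$. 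The paper does not reprove existence and uniqueness; it cites Proposition 5.2.12 there.

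For the other types, (ii) does hold. There $H^1(\g_-,\g)$ has no component of positive homogeneity, and $\partial\phi$ does not vanish on $\g_{-2}$ for $0\neq\phi\in\g_1$, so your argument goes through. For $C_l$ the final torsion-free assertion holds trivially, because $\ker\square\subset C^2(\g_-,\g)$ has no homogeneity-one part.

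Two further repairs are needed.

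In (i) the logic is reversed. The condition $\partial^*\kappa^{(1)}=0$ places $T^{(1)}$ in $\ker\partial^*=\ker\square\oplus\im\partial^*$. So normality removes the $\im\partial$ component, not the $\im\partial^*$ one. The $\im\partial^*$ component vanishes for a different reason: $\partial\kappa^{(1)}=0$, by the Bianchi identity applied to the lowest homogeneous component of the curvature of a regular geometry. Together these make $T^{(1)}$ harmonic. Starting from the Weyl connection, $\Phi^{(1)}=0$ therefore already works, and (iii) is unnecessary.

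In step three, the image of $\ker\lambda_*$ is all of $\mathfrak{sp}(H,L)$ only for $C_l$; otherwise $(\ker\square)_0$ would have to vanish. What is true, and suffices together with $\tau\in\mathfrak{sp}(H,L)$, is $\mathfrak{sp}(H,L)=\ker\partial\cap(H^*\otimes H)=(\ker\square)_0\oplus\partial(\ker\lambda_*)$.
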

The last statement is the only part that requires some additional justification. As a consequence of Theorem 5.2.11 of \cite{CapSlovak2}, $\widehat{T}\in \ker \square$ determines $\widehat{\nabla}$ uniquely in contact directions and, in particular, it agrees with the Weyl connection $\nabla$ in these directions. Then
\[
\widehat{T}^{(1)}=T^{(1)}=\kappa^{(1)}=0
\]
because the Cartan geometry is torsion-free. The quantity $\widehat{\tau}$ associated to the Tanaka--Webster connection $\widehat{\nabla}$ is known as the \emph{Webster torsion}.

\subsection{Relation to the Weyl Connection}
Let $\widehat{\nabla}$ be the affine Tanaka--Webster connection. As discussed in \cite{CapSlovak2}, $\nabla$ agrees with $\widehat{\nabla}$ when restricted to directions in $H$. Also, $\nabla r=\widehat{\nabla} r=0$. Let $X\in H$. The difference between the two connections in the Reeb direction can be expressed in terms of a correction term $\rho$, so that $\nabla_rX=\widehat{\nabla}_rX-\rho(X)$. If we take $\widehat{\nabla}$ to be fixed, then $\rho$ determines $\nabla$. We apply a recipe in \cite{CapSlovak2} to compute the correction term $\rho$ in each case. This correction term is related to the Ricci tensor and we will call it the modified Ricci tensor. In the context of CR geometry $\rho$ is also called the Schouten tensor. This should not be confused with the Rho tensor $\P$, which is also often called the Schouten tensor, although $\rho$ does represent a component of $\P$. 

 The bracket $\{\}:\bigwedge^2 \g_{1}\to \g_{2}$ induces a map $\{\cdot,\cdot\}:\bigwedge^2 \g_{-1}^*\otimes \g_{-2}\to \g_{-2}^*\otimes \g_{-2}$. Recall that the \emph{Levi bracket} $\L:\g_{-1}\times \g_{-1}\to \g_{-2}$ is defined as the restriction of the Lie bracket. Define $s$ by $\{\cdot,\cdot\}\L=s\Id_{\g_{-2}}$. Recall that $\End_0(H)=\G_0\times_{G_0}\g_0$. The following result is equivalent to Theorem 5.2.13 in \cite{CapSlovak2}.

\begin{theorem}
There exists $\rho\in \End_0(H)$ such that $\nabla_r X=\widehat{\nabla}_rX +\partial \rho(X)$, and it satisfies $-(s+\square)\rho =\widehat{\Ric}$.
\end{theorem}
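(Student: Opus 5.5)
The first step is to compare the two principal connections on $\G_0$ that induce $\nabla$ and $\widehat{\nabla}$. Their difference $\omega_0-\gamma$ is a horizontal, $G_0$-equivariant $\g_0$-valued $1$-form, so it corresponds to a section of $T^*M\otimes\End_0(H)$. Since $\nabla_X=\widehat{\nabla}_X$ for $X\in H$, this form vanishes on $H$. It is therefore determined by its value on $r$, a section $\rho$ of $Q^*\otimes \End_0(H)\cong\End_0(H)$ (using $\theta$, i.e.\ the gauge $r\leftrightarrow\psi$, $\theta\leftrightarrow\phi$). Both connections satisfy $\nabla r=\widehat{\nabla} r=0$, so $\rho$ acts trivially on $Q$, hence preserves $L$ and is orthogonal to $E$. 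With the left-action convention $\partial A=-A$, the difference becomes $\nabla_rX=\widehat{\nabla}_rX+\partial\rho(X)$. This proves existence. Viewed as a $1$-cochain $\g_{-2}\to\g_0$, $\rho$ satisfies $\partial^*\rho\in\g_2$, and this component is proportional to $\langle E,\rho\rangle=0$. So $\partial^*\rho=0$ and $\square\rho=\partial^*\partial\rho$.

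To obtain the equation for $\rho$, I will use normality in homogeneity $2$, $\partial^*\kappa^{(2)}=0$, with $\kappa^{(2)}=T^{(2)}+R^{(2)}+\partial\P^{(2)}$ from \eqref{eqn::WeylCurvature}, and project onto the $\g_{-2}^*\otimes\g_0$ component, i.e.\ the part $Q\to\End_0(H)$. I will express every term through hatted quantities and $\rho$, as follows.
\begin{itemize}
\item \emph{Torsion.} $T^{(2)}(r,X)=\tau(X)=\widehat{\tau}(X)+\partial\rho(X)$. Since $\widehat{\tau}\in\ker\square$ it is $\partial^*$-closed, so the torsion contributes $\partial^*(\partial\rho)$, read as a $2$-cochain. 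Here $\partial\rho$ has a $\g_{-2}\wedge\g_{-1}\to\g_{-1}$ part and an $\g_{-1}\wedge\g_{-1}\to\g_0$ part, the latter coming from $\rho([X,Y])$.
\item \emph{Curvature.} For $X,Y\in H$, $[X,Y]$ has Reeb component $L(X,Y)r$. This gives $R(X,Y)=\widehat{R}(X,Y)+L(X,Y)\,\partial\rho$ up to the sign convention, that is, $R^{(2)}=\widehat{R}^{(2)}+\L\otimes\rho$ on $\bigwedge^2H$. Applying $\partial^*$, the first piece gives $\widehat{\Ric}$ by the definition $\widehat{\Ric}=\partial^*\widehat{R}^{(2)}(r)$. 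The second piece is exactly the contraction $\{\cdot,\cdot\}\L=s\Id$, so it gives $s\rho$ (with a sign to be fixed).
\item \emph{Rho term.} $\partial^*\partial\P^{(2)}$ must be checked to have no $\g_{-2}\to\g_0$ component. $\P^{(2)}$ is a map $\g_{-1}\to\g_1$, and $\square$ preserves this $\g_0$-module type, so $\partial^*\partial\P^{(2)}=\square\P^{(2)}-\partial\partial^*\P^{(2)}$, where $\partial^*\P^{(2)}$ is a $0$-cochain in $\g_2$ because $\partial^*$ maps $\g_{-1}^*\otimes\g_1\to\g_2$. Hence $\square\P^{(2)}$ stays in $\g_{-1}^*\otimes\g_1$, and $\partial\partial^*\P^{(2)}$ is $\g_{-1}^*\otimes\g_1$-valued as well, since $\g_{-2}$ brackets $\g_2$ into $\g_0$ only via $E$ and $[\g_{-1},\g_2]\subset\g_1$. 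A short check of the $E$-component, using that the Weyl structure is exact, should kill it.
\end{itemize}

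Collecting the $\g_{-2}^*\otimes\g_0$ components then gives $\widehat{\Ric}+s\rho+\square\rho=0$ up to the global signs, i.e.\ $-(s+\square)\rho=\widehat{\Ric}$. This matches Theorem 5.2.13 of \cite{CapSlovak2} after reconciling the two sign conventions, namely left versus right action and the sign of $\widehat{\Ric}$.

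The main obstacle is this bookkeeping. One must verify that the $\g_{-1}\wedge\g_{-1}\to\g_0$ part of $\partial\rho$ and the $\L\otimes\rho$ correction to $R^{(2)}$ are not double counted: the former is a genuine piece of $\partial^*\partial\rho=\square\rho$, while the latter produces the separate $s\rho$ term. One must also track the signs through the $\flat/\sharp$ conventions of Section 3. As a fallback, I would avoid redoing the computation and instead translate Theorem 5.2.13 of \cite{CapSlovak2} directly through the convention changes listed at the start of Section 5.
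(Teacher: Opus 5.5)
The paper gives no argument of its own here. It imports the statement as a reformulation of Theorem 5.2.13 of \cite{CapSlovak2}, which is your fallback. Your main route is a genuinely self-contained alternative: read off the part of the $\g_{-2}^*\otimes\g_0$ component of $\partial^*\kappa^{(2)}=0$ that is orthogonal to the grading element $E$. It has the advantage that $\P^{(2)}$ drops out, so no torsion-freeness is needed. Your existence argument is fine. The substance of the $\P^{(2)}$ step is also fine, with one correction: $\partial\partial^*\P^{(2)}$ \emph{does} have a $\g_{-2}^*\otimes\g_0$ part, namely a multiple of $[\psi,\phi]=-E$. You do not need exactness to kill it; just project onto $E^\perp$, where $\widehat{\Ric}$ and $\rho$ live.

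The real gap is that your bookkeeping double counts. The torsion $T$ is $\g_-$-valued, so $T^{(2)}-\widehat T^{(2)}$ is only the $\g_{-2}\wedge\g_{-1}\to\g_{-1}$ part of $\partial$ of your $1$-cochain. The $\g_{-1}\wedge\g_{-1}\to\g_0$ part of that $\partial$, which comes from $\rho([X,Y])$, \emph{is} the correction $R^{(2)}-\widehat R^{(2)}=L\otimes\rho$. So the entire $\rho$-dependence of $\partial^*(T+R)^{(2)}$ is $\partial^*\partial$ of the $1$-cochain, and there is no extra $s\rho$ on top of it. The constant $s$ enters only when you convert the $1$-cochain Laplacian on $\g_{-2}^*\otimes\g_0$ into the Laplacian on $\g_0$. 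The latter is the $\square$ of the statement, the one evaluated in Lemma~\ref{lemma::LaplacianEigenvalues}. A direct computation gives $\partial^*\partial(\theta\otimes A)(r)=\sum_i[X^i,AX_i]-nA$, while $\square A=\sum_i[X^i,AX_i]$ on $\g_0\cap E^\perp$. So the $1$-cochain Laplacian there is $\square+s$, which also follows from \eqref{eqn::KostantScalar} using $\langle\nu,\mu\rangle=0$ and $\langle\mu,\delta\rangle=n+1$.

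Second, the sign you postpone is not a global one. It is the relative sign between $\widehat{\Ric}$ and the $\rho$-terms, which is the whole content of the identity, and with the conventions fixed in this paper it does not come out as you claim. Work in the $0$-cochain picture, with $\tau=\widehat\tau-\rho$ (in the $1$-cochain picture the relevant cochain is $-\theta\otimes\rho$, not $\theta\otimes\rho$).
\begin{itemize}
\item The torsion contributes $\sum_i[X^i,\tau(X_i)]=-\sum_i[X^i,\rho(X_i)]=-\square\rho$. The share of $\widehat\tau$ vanishes by harmonicity. Recall that $\partial^*$ on $1$-cochains is $-\sum_i[X^i,\Phi(X_i)]$; this is what makes $\square$ act by $-(n+2)$ on $\z(\g_0)$.
\item The curvature contributes $\frac{1}{2}\sum_i R((X^i)^{\sharp},X_i)=\widehat{\Ric}+n\rho$, because $R(X,Y)=\widehat R(X,Y)+L(X,Y)\rho$.
\item The $\P^{(2)}$-terms contribute nothing orthogonal to $E$.
\end{itemize}
Hence $\widehat{\Ric}+n\rho-\square\rho=0$, that is $(s+\square)\rho=\widehat{\Ric}$. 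This is the opposite of $-(s+\square)\rho=\widehat{\Ric}$.

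As an independent check, in the Legendrian case contract the Section 6 formula for $\kappa_{\alpha\overline{\beta}\gamma}^{\ \ \ \ \sigma}$ against $L$. You get $\widehat{\Ric}+(n-\square)\rho$, which must vanish because that component is harmonic. So ``up to the global signs'' hides exactly the point that needs checking. If you fall back on Theorem 5.2.13 of \cite{CapSlovak2}, you must carry the sign change of $\widehat{\Ric}$ through the translation, together with the switch from right to left action.
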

As a consequence of the identification of $\g_0$ with its action on $\g_-$ from the left, we can write $\partial \rho=-\rho$. Then the result in the theorem can be rephrased as
\[
\nabla_r X=\widehat{\nabla}_rX-\rho(X).
\]

We can compute $s$ explicitly under our choice of normalization condition on the bilinear form.
Since
\begin{align*}
\{\cdot,\cdot\}(\L)(\psi)
&=\frac{1}{2}\L([X^i,\psi],X_i)\\
&=-\frac{1}{2}L((X^i)^{\sharp},X_i)\psi\\
&=-\frac{1}{2}X^i(X_i)\psi\\
&=-n\psi,
\end{align*}
we get $s=-n$. Plugging this into the theorem,
$
(n-\square)\rho=\widehat{\Ric}.
$
We will see through the case analysis later in this section that $(n-\square)$ acts invertibly in each case, so we can evaluate 
\begin{equation}\label{eqn::Laplacianrho}
\rho=(n-\square)^{-1}(\widehat{\Ric}).
\end{equation}
The Kostant Laplacian acts on components falling into different isotypic subrepresentations by different scalars. Because of this, we will call $\rho$ the \emph{modified Ricci tensor}. We will see that the modified Ricci tensor $\rho$ also agrees with the Schouten tensor as defined in CR geometry, so it could also be appropriate to call $\rho$ the Schouten tensor.

\subsection{Weyl Torsion Decomposition}
Because $\tau$ has homogeneity $0$, the $2$-cochain $\partial \tau$ has homogeneity $0$. Therefore $\partial \tau (r,X)$ for $X\in H$ has homogeneity $-3$ and necessarily vanishes. However, for $X,Y\in H$,
\begin{align*}
    \partial \tau(X,Y)
    &=\{X,\tau(Y)\}-\{Y,\tau(X)\}-\tau(\{X,Y\})\\
    &=\L(X,\tau(Y))+\L(\tau(X),Y)\\
    &=0
\end{align*}
since $L$ is antisymmetric and $\tau$ preserves the Levi bracket $\L$. Therefore $\tau\in \ker \partial$. The algebraic Hodge decomposition says that $\ker \partial=\ker \square \oplus \im \ \partial$. On the other hand,
\[
\tau(X)
=\nabla_rX-[r,X]
=\widehat{\nabla}_r X-[r,X]-\rho(X)
=\widehat{\tau}(X)-\rho(X).
\]
By definition of the Tanaka--Webster connection, $\widehat{\tau}\in \ker \square$. By construction, $-\rho=\partial \rho\in \im \ \partial$, so 
\[
\tau=\widehat{\tau}-\rho
\]
is exactly the algebraic Hodge decomposition of $\tau$. Since they are contained in $\ker \partial$, the tensors $\widehat{\tau}$ and the modified Ricci tensor $\rho$ must also preserve $L$, so they are also traceless and their index-lowered versions are also symmetric. In the context of CR geometry, a contact form is called a pseudohermitian structure. If $\widehat{\tau}=0$, the pseudohermitian structure is \emph{Sasakian}. The modified Ricci tensor $\rho=(n-\square)^{-1}\widehat{\Ric}$ vanishes if and only if $\widehat{\Ric}$ vanishes. It follows that $\tau=0$ if and only if the pseudohermitian structure is Sasakian and Ricci-flat.

\subsection{Modified Ricci Tensor}
To prove Theorem~\ref{thm::rhoTensor}, we will apply equation \eqref{eqn::Laplacianrho} on a case-by-case basis depending on the complexification $\g^{\C}$. The relevant eigenvalues of the Kostant Laplacian are collected in Lemma~\ref{lemma::LaplacianEigenvalues}. 

Recall that the simple ideals in $\g_0^{ss}$ correspond to the connected components of the Dynkin diagram after removing the crossed nodes inducing the grading. The crossed nodes corresponding to the contact gradings are the ones shown in the figure in Section~\ref{app::0Cochains}. Therefore $\g_0^{ss}$ has $3$ simple ideals in the $D_4$ case, two simple ideals in the $B_l$ for $l\ge 3$ case or the $D_l$ for $l\ge 5$ case, and one simple ideal in the other cases. In the $B_l$ and $D_l$ cases we will decompose $\g_0^{ss}=\g_0'\oplus \g_0''$ where $\g_0'$ corresponds to the simple root $\alpha_1$ and $\g_0''$ corresponds to the simple roots $\alpha_i$ for $i\ge 3$. In particular, in the $D_4$ case $\g_0''$ has two simple ideals.

Recall that in the $A_l$ case, the complexification $\g_{-1}^{\C}$ decomposes as the direct sum of two $\g_0^{\C}$-irreducible representations, $\g_{-1}^{\E}$ and $\g_{-1}^{\F}$.
The space $\g_{-1}^{\E}$ is the direct sum of root spaces $\g_{\alpha}$ such that $\alpha=\alpha_1+\cdots+\alpha_i$ for $i<l$, and $\g_{-1}^{\F}$ is the direct sum of root spaces $\g_{\alpha}$ such that $\alpha=\alpha_i+\cdots+\alpha_l$ for $1<i$. Since these subspaces may be swapped by $G_0$, we have to fix a local section of $\G_0\to M$ and define $\E$ and $\F$ to be subbundles of $\G_0\times_{G_0}\g_{-1}$ corresponding to $\g_{-1}^{\E}$ and $\g_{-1}^{\F}$ relative to that local section. Recall that $\widehat{R}$ is defined to be the trace of $\widehat{\Ric}$ acting on $\E$, and $\iota:H\to H$ is the map acting by $+1$ on $\E$ and $-1$ on $\F$. We have established that $\widehat{\Ric}$ acts tracelessly on $H$. Therefore $\widehat{\Ric}$ acts on $\F$ with trace $-\widehat{R}$. This implies that $\widehat{R}\iota$ is in fact independent of a choice of ordering of $\E$ and $\F$, and thus is independent of a choice of local section of $\G_0$.

\begin{thmrhoTensor}[Modified Ricci Tensor]
Given a normal parabolic contact geometry with a fixed contact form, the modified Ricci tensor $\rho$ can be computed by the following.
\begin{enumerate}[(1)]
\item If $\g^{\C}\cong A_l$ for $l\ge 2$, $\rho=\frac{1}{n+2}\left(\widehat{\Ric}-\frac{\widehat{R}}{2(n+1)}\iota\right)$.
\item If $\g^{\C}\cong B_l$ for $l\ge 3$ or $\g^{\C}\cong D_l$ for $l\ge 4$, $\rho=\frac{\widehat{\Ric}'}{2n}+\frac{\widehat{\Ric}''}{n+4}$.
\item If $\g^{\C}\cong C_l$ for $l\ge 2$, $\rho=\frac{\widehat{\Ric}}{n+1}$.
\item If $\g^{\C}\cong E_6$, $\rho=\frac{\widehat{\Ric}}{16}$.
\item If $\g^{\C}\cong E_7$, $\rho=\frac{\widehat{\Ric}}{24}$.
\item If $\g^{\C}\cong E_8$, $\rho=\frac{\widehat{\Ric}}{40}$.
\item If $\g^{\C}\cong F_4$, $\rho=\frac{\widehat{\Ric}}{12}$.
\item If $\g^{\C}\cong G_2$, $\rho=\frac{3\widehat{\Ric}}{16}$.
\end{enumerate}
\end{thmrhoTensor}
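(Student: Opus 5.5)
We start from equation \eqref{eqn::Laplacianrho}, $\rho=(n-\square)^{-1}\widehat{\Ric}$, where $\widehat{\Ric}\in\End_0(H)$ corresponds to a $0$-cochain valued in $\g_0$. Since $\widehat{\Ric}$ acts trivially on $Q$, it is orthogonal to the grading element $E$. It therefore lies in $\g_0^{ss}$, together with, in the $A_l$ case, the part of $\z(\g_0)$ orthogonal to $E$. The Kostant Laplacian on $C^0(\g_-,\g)_0=\g_0$ is $\g_0$-equivariant, so it acts by a scalar on each isotypic component. The proof therefore reduces to decomposing $\widehat{\Ric}$ into these components and inverting $n-\square$ on each.

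\textbf{Step 1: the eigenvalues.} We would take from Lemma~\ref{lemma::LaplacianEigenvalues} (Section~\ref{app::0Cochains}) the scalars by which $\square$ acts. There is one scalar for each simple ideal of $\g_0^{ss}$, and in the $A_l$ case a separate scalar on the one-dimensional piece of $\z(\g_0)\cap E^{\perp}$. If we were computing them ourselves, we would use Kostant's formula $\square=\frac{1}{2}(C_\g-C_{\g_0})+(\text{grading correction})$ restricted to $\g_0$. On $0$-cochains this amounts to $\square A=\partial^*\partial A=-\sum_i[X^i,[X_i,A]]$ with $X_i$ running over a basis of $\g_-$. On a simple ideal $\s\subset\g_0$ this is a Casimir-type element, so its value is $\frac12\bigl(c_{\g}(\text{adjoint})-c_{\s}(\text{adjoint})\bigr)$ measured in our normalization $|\mu|^2=2$. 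The values $n-\square$ should come out to be $n+1$ for $C_l$, $16,24,40,12$ for $E_6,E_7,E_8,F_4$, $16/3$ for $G_2$, and $2n$ on $\g_0'$ and $n+4$ on $\g_0''$ for $B_l,D_l$. For $A_l$ they should be $n+2$ on $\g_0^{ss}$ and a different value on the central piece. The inverses of these values give the stated coefficients.

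\textbf{Step 2: the one-component cases.} For $C_l$, the exceptional algebras and $G_2$, the element $\widehat{\Ric}$ lies in the single simple ideal, so $\rho$ is a scalar multiple of $\widehat{\Ric}$ and we are done. In the $B_l/D_l$ case we project $\widehat{\Ric}=\widehat{\Ric}'+\widehat{\Ric}''$ onto $\g_0'$ and $\g_0''$ and apply the two scalars separately. For $D_4$ the two ideals in $\g_0''$ must share the same eigenvalue, which we would check using triality symmetry.

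\textbf{Step 3: the $A_l$ case, the main obstacle.} Here $\g_0^{\C}\cong\C^2\oplus\sl_n$, and the component of $\widehat{\Ric}$ orthogonal to $E$ includes a central part. This central element acts on $\E$ and $\F$ by opposite scalars, so it is proportional to $\iota$. Since $\tr(\iota|_\E)=n$, the projection is $\frac{\widehat{R}}{n}\iota$, and $\widehat{\Ric}-\frac{\widehat{R}}{n}\iota$ lies in the semisimple part. Writing $c$ for the eigenvalue of $n-\square$ on $\iota$, we get
\[
\rho=\frac{1}{n+2}\Bigl(\widehat{\Ric}-\frac{\widehat{R}}{n}\iota\Bigr)+\frac{1}{c}\frac{\widehat{R}}{n}\iota .
\]
We would compute $\square\iota$ directly with root vectors in $\g_{-1}^{\E},\g_{-1}^{\F},\g_{-2}$: brackets of $\iota$ with $\g_{-1}$ give $\pm$ the element itself, while $\iota$ kills $\g_{-2}$, so $c$ comes out as an explicit rational function of $n$. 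Matching against the claimed formula requires $\frac{1}{c}-\frac{1}{n+2}=-\frac{n}{2(n+1)(n+2)}$, that is $c=\frac{2(n+1)}{n+2}\cdot\frac{n+2}{n+2-\,\cdot\,}$. Rather than guess, we would compute $c$ and verify it. This bookkeeping, together with the signs coming from identifying $\g_0$ with its left action, is where errors are most likely.
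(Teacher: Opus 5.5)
Your reduction matches the paper's: $\rho=(n-\square)^{-1}\widehat{\Ric}$; $\widehat{\Ric}$ kills $Q$, equivalently is orthogonal to $E$, so it lies in $\g_0^{ss}$ (plus the line spanned by $\iota$ in type $A$); then you invert $n-\square$ componentwise using Lemma~\ref{lemma::LaplacianEigenvalues}. Your treatment of the non-$A$ cases also matches the paper. The genuine gap is Step 3. You never determine the scalar $c$ by which $n-\square$ acts on $\iota$, and the formula you write for $c$ breaks off. So the $A_l$ case, the only one needing more than reading off a constant, is left unproved. The missing input is part (1) of the lemma. Since $\z(\g_0)$ is a trivial $\g_0$-module, $\nu=0$ in \eqref{eqn::KostantScalar}, so $\square=-\frac12|\mu|^2-\langle\mu,\delta\rangle=-(n+2)$ on all of $\z(\g_0)$, and $c=2(n+1)$. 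The direct computation you proposed confirms this: $\square\iota=-\sum_i[X^i,[X_i,\iota]]=\sum_{\E}[X^i,X_i]-\sum_{\F}[X^i,X_i]$. Writing elements of $\g_0\subset\sl_{n+2}$ as $(a,A,b)$, this equals $(n,-I_n,0)-(0,I_n,-n)=(n,-2I_n,n)=-(n+2)\iota$, because $\iota=\frac{1}{n+2}(-n,2I_n,-n)$. In fact your own matching condition $\frac1c-\frac1{n+2}=-\frac{n}{2(n+1)(n+2)}$ already solves to $\frac1c=\frac{1}{2(n+1)}$, so there was nothing to guess. With this value,
\[
\frac{1}{n+2}\Bigl(\widehat{\Ric}-\frac{\widehat{R}}{n}\iota\Bigr)+\frac{\widehat{R}}{2n(n+1)}\iota=\frac{1}{n+2}\Bigl(\widehat{\Ric}-\frac{\widehat{R}}{2(n+1)}\iota\Bigr),
\]
which is exactly the paper's computation.

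There are also some smaller points. First, your Casimir heuristic in Step 1 has the wrong sign. Start from your own formula $\square A=-\sum_i[X^i,[X_i,A]]$ and split the $\g$-Casimir over $\g_-\oplus\g_0\oplus\p_+$. The central term $\frac12\ad(\sum_i[X^i,X_i])$ kills $\s$, so on a simple ideal $\s$ you get $\square=\frac12(c_{\s}-c_{\g})$, which is negative. In type $A$, $c_{\g}=2n+4$ and $c_{\sl_n}=2n$ give $\square=-2$, whereas your version would give $n-\square=n-2$. Since you take the constants from the lemma, this does not damage the argument, but that formula should not be used as written. Second, $\g_0$-equivariance alone forces $\square$ to be a scalar only on isotypic components of multiplicity one. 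On the two-dimensional trivial component $\z(\g_0)$ in type $A$, it is Kostant's formula (or the direct computation above) that makes $\iota$ an eigenvector. Third, the paper also checks that $\widehat{\Ric}'$, $\widehat{\Ric}''$ and $\widehat{R}\iota$ are well defined. This uses $G_0$-invariance of $\g_0'\oplus\g_0''$ away from $D_4$, including the $B_3$ case, and independence of the ordering of $\E,\F$. That concerns the meaning of the statement rather than the computation, but your proposal omits it.
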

\begin{proof}
Recall that $\widehat{\Ric}$ is a section of $\G_0\times_{G_0}\g_0=\End_0(H)$ and $\rho$ is a section of $\End_0(H)$ given by $(n-\square)^{-1}\widehat{\Ric}$. This action of $\square$ on $\g_0$ is unchanged if we consider $\g_0\subset \g_0^{\C}$ and apply the complex Kostant Laplacian. Therefore we may regard $\widehat{\Ric}$ as a section of $\G_0\times_{G_0} \g_0^{\C}$ and evaluate $(n-\square)^{-1}(\widehat{\Ric})$ there. We established that $\widehat{\Ric}$ acts trivially on $Q$, so it corresponds to elements in $\g_0$ acting trivially on $\g_{-2}$. The derived subalgebra $\g_0^{ss}$ acts trivially on $\g_{-2}$ since it is $1$-dimensional. On the other hand, the grading element $E\in \z(\g_0)$ acts nontrivially on $\g_{-2}$. It follows that $\widehat{\Ric}$ is contained in a codimension-one subspace of $\g_0$ that contains $\g_0^{ss}$. Since the dimension of $\z(\g_0)$ agrees with the number of crossed nodes, all cases except $\g^{\C}\cong A_l$ have $\dim(\z(\g_0))=1$, which implies $\widehat{\Ric}$ is a section of $\G_0\times_{G_0}\g_0^{ss}$. 
    \begin{itemize}
    \item [(1)] 
We defined $\E,\F$ and $\widehat{R}$ above. Using the $G_0$-invariant decomposition $\g_0=\g_0^{ss}\oplus \z(\g_0)$, let $\Ric_0$ be the tensor corresponding to the $\z(\g_0)$ component of $\widehat{\Ric}$. For the moment, define $\iota$ to be a section of $\G_0\times_{G_0}\z(\g_0^{\C})$ given by the condition that $\frac{\widehat{R}}{n}\iota$ corresponds to the $\z(\g_0)$ component of $\widehat{\Ric}$. We will recover the definition of $\iota$ given above. Since $\g_0^{ss}$ is derived, it acts tracelessly on $\g_{-1}^{\E}$ and $\g_{-1}^{\F}$. Therefore $\frac{\widehat{R}}{n}\iota$ acts on $\E$ with trace $\widehat{R}$ and on $\F$ with trace $-\widehat{R}$. On the other hand, $\iota$ acts $\g_0$-equivariantly on the $\g_0$-irreducible representations $\g_{-1}^{\E}$ and $\g_{-1}^{\F}$, so Schur's lemma it must act by a scalar on each. Therefore $\iota$ acts on $\E$ by $+1$ and on $\F$ by $-1$. By definition, we have $\widehat{\Ric}=\widehat{\Ric}_0+\frac{\widehat{R}}{n}\iota$. By Lemma~\ref{lemma::LaplacianEigenvalues}, $n-\square$ acts by $n+2$ on $\widehat{\Ric}_0$ and $2(n+1)$ on $\iota$. Then
\begin{align*}
\rho&=(n-\square)^{-1}\widehat{\Ric}\\
&=(n-\square)^{-1}(\widehat{\Ric}_0+\frac{\widehat{R}}{n}\iota)\\
&=\frac{1}{n+2}\widehat{\Ric}_0+\frac{\widehat{R}}{2n(n+1)}\iota\\
&=\frac{1}{n+2}\left(\widehat{\Ric}-\frac{\widehat{R}}{n}\iota+\frac{(n+2)\widehat{R}}{2n(n+1)}\iota\right )\\
&=\frac{1}{n+2}\left(\widehat{\Ric} - \frac{n\widehat{R}}{2n(n+1)}\iota\right)\\
&=\frac{1}{n+2}\left(\widehat{\Ric}-\frac{\widehat{R}}{2(n+1)}\iota\right).
\end{align*}
    \item [(2)] Aside from the $D_4$ case, the decomposition $\g_0^{ss}=\g_0'\oplus \g_0''$ is canonical and thus $G_0$-invariant. This is not obvious in the $B_3$ case because $\g_0^{ss}$ is the direct sum of two simple ideals that are both real forms of $\sl_2$. Any automorphism of $\g_0$ permutes its simple ideals. However, no grade-preserving automorphism of $\g$ can switch these simple ideals since, in particular, Lemma~\ref{lemma::LaplacianEigenvalues} shows that $\square$ acts by a different scalar on each.
    
    It follows that in the $D_4$ case we should work relative to a local section as in the $A_l$ case, but otherwise we get well-defined subbundles $\G_0\times_{G_0}\g_0'$ and $\G_0\times_{G_0}\g_0''$. Decompose $\widehat{\Ric}$ into its components in these two subbundles as $\widehat{\Ric}=\widehat{\Ric}'+\widehat{\Ric}''$. Then by Lemma~\ref{lemma::LaplacianEigenvalues},
\[
\rho
=(n-\square)^{-1}\widehat{\Ric}
=\frac{\widehat{\Ric}'}{2n}
+\frac{\widehat{\Ric}''}{n+4}.
\]
    \item [(3)--(8)]
    In the remaining cases $\g_0^{ss}$ is a simple ideal and $n-\square$ acts on $(\g_0^{ss})^{\C}$ according to the values given in Lemma~\ref{lemma::LaplacianEigenvalues}.
    \end{itemize}
\end{proof}

\section{Legendrian Contact Structures}
\subsection{Lie Algebras}
This section sets up the matrix algebra that we will need for the harmonic curvature calculation. Let $\g=\sl_{n+2}$ with Cartan subalgebra the diagonal matrices and positive roots $e_i-e_j$ for $i<j$. The simple roots are $\alpha_i=e_i-e_{i+1}$ for $1\le i \le n+1$. The contact grading is associated to the parabolic subalgebra $\p$ corresponding to the subset $\{\alpha_1,\alpha_{n+1}\}$ of simple roots, since $\mu=\lambda_1+\lambda_{n+1}$. Then
\[
\sl_{n+2}
=\left\{
\begin{bmatrix}
    a & x & \gamma\\
    X & A & y^T\\
    \beta & Y^T & b
\end{bmatrix}:
a,b,\beta,\gamma\in \R, X,Y\in \R^n, x,y\in (\R^n)^*, A\in \gl_n, a+b+\tr(A)=0
\right\}
\]
and
\[
\p=
\left\{\begin{bmatrix}
a & x & \gamma\\
0 & A & y^T\\
0 & 0 & b  
\end{bmatrix}:
a,b,\gamma\in \R, x,y\in (\R^{n})^*, A\in \gl_n, a+b+\tr(A)=0
\right\}.
\]
Define a grading $\g=\g_{-2}\oplus \g_{-1}\oplus \g_0\oplus \g_1\oplus \g_2$ where the subspace $\g_{-2}$ corresponds to $\beta$, $\g_{-1}=\g_{-1}^E\oplus \g_{-1}^F$ corresponds to $X$ and $Y$, $\g_0$ corresponds to $a,A$ and $b$, $\g_1=\g_1^E\oplus \g_1^F$ corresponds to $x$ and $y$, and $\g_2$ corresponds to $\gamma$. This is a contact grading. We will write elements of $\g_{-1}$ as $(X,Y)$ and elements of $\g_1$ as $(x,y)$. The grading element is 
\[
E=
\begin{bmatrix}
 1 & 0 & 0\\
 0 & 0 & 0\\
 0 & 0 & -1
\end{bmatrix}.
\]
The trace form $\langle A,B \rangle=\tr(AB)=A_{i}^{\ j}B_j^{\ i}$ is equal to $\frac{1}{2(n+2)}$ times the Killing form, and is in fact the unique $\g$-invariant bilinear form such that $|E|^2=2$. Let $\psi\in \g_{-2}$ be the matrix corresponding to $\beta=1$ and $\phi\in \g_2$ the matrix corresponding to $\gamma=1$, so $\langle \psi, \phi \rangle=1$. Recall that $L:\g_{-1}\times \g_{-1}\to \R$ is defined by $[U,V]=L(U,V)\psi$. The components $L:\g_{-1}^E\times \g_{-1}^E\to \R$ and $L:\g_{-1}^F\times \g_{-1}^F\to \R$ vanish. Thus $\flat$ provides isomorphisms $\g_{-1}^E\cong (\g_{-1}^F)^*\cong \g_1^F$ and $\g_{-1}^F\cong (\g_{-1}^E)^* \cong \g_1^E$. For $X\in \g_{-1}^E$ and $Y\in \g_{-1}^{F}$
\[
L(X,Y)=-X\cdot Y=\langle -X^T, Y \rangle
\]
so $X^{\flat}=-X^T\in \g_1^F$. Then for $y\in \g_1^F$, $y^{\sharp}=-y^T\in \g_{-1}^E$. For $Y\in \g_{-1}^F$ and $X\in \g_{-1}^E$
\[
L(Y,X)=Y\cdot X=\langle Y^T,X \rangle,
\]
so $Y^{\flat}=Y^T\in \g_1^E$. For $(a,A,b)\in \g_0$ and $X\in \g_{-1}^E$,
\begin{align*}
[(a,A,b),X]
&=(A-a)X.
\end{align*}
Next, we consider the bracket $\g_{-1}\times \g_1\to \g_0$. The components $\g_{-1}^E\times \g_1^F\to \g_0$ and $\g_{-1}^F\times\g_1^{E}\to \g_0$ are trivial. On the other hand, $[X,x]=(-x(X),Xx,0)$ and $[Y,y]=(0,-y^TY^T,y(Y))$. For $Z\in \g_{-1}^E$, 
\[
[[X,x],Z]=Xx(Z)+Zx(X)
\]
and
\[
[[Y,y],Z]=-y^TY^TZ=y^{\sharp}Y^{\flat}(Z).
\]

 As explained in \cite{CapSlovak2}, $\ker \square \subset C^1(\g_-,\g)$ consists of two $\g_0$-irreducible components, one taking $\g_{-1}^E\to \g_{-1}^F$ and one taking $\g_{-1}^{F}\to \g_{-1}^E$.

\subsection{Legendrian Contact Structures}
A \emph{Legendrian contact} structure is a $2n+1$-dimensional manifold $M$ with a contact distribution $H$ and two Legendrian distributions $E$ and $F$ for $H$ such that $H=E\oplus F$. In other words, $E$ and $F$ are $n$-dimensional and $[E,E]\subset H$ and $[F,F]\subset H$. An \emph{integrable Legendrian contact structure} is a Legendrian contact structure where, rather than assuming just that $E$ and $F$ are Legendrian, we assume the stronger condition that $E$ and $F$ are integrable, so $[E,E]\subset E$ and $[F,F]\subset F$. A Legendrian contact structure is sometimes called a Lagrangian contact structure or a paracontact structure, and an integrable Legendrian contact structure is sometimes called a bi-Legendrian foliation or a para-CR structure.

The basic approach taken in Cartan geometry is to view a family of geometric structures as curved or deformed versions of a homogeneous model space. The model space in the setting of Legendrian contact structures is the following.
\begin{example}[Model Space]
Define $\mathrm{Flag}_{1,n+1}(\R^{n+2})$ to be the space of $(1,n+1)$ flags $V_1\subset V_{n+1}\subset \R^{n+2}$. There are maps $\mathrm{Flag}_{1,{n+1}}(\R^{n+2})\to \mathrm{Gr}_1(\R^{n+2})$ and $\mathrm{Flag}_{1,{n+1}}(\R^{n+2})\to \mathrm{Gr}_{n+1}(\R^{n+2})$. Level sets of these maps define two foliations of dimension $n$. Let $E$ and $F$ be the distributions tangent to these foliations. The direct sum $E\oplus F$ is a contact distribution, and $E$ and $F$ are clearly integrable. The group $PGL(\R^{n+2})$ acts transitively on $\mathrm{Flag}_{1,n+1}(\R^{n+2})$ in a way that preserves $E$ and $F$.
\end{example}

A Legendrian contact structure may be described as a regular normal Cartan geometry modeled on $\mathrm{Flag}_{1,n+1}(\R^{n+2})$ under the action of the group $PGL_{n+2}(\R)$. As a Klein pair, this corresponds to $(PGL_{n+2}(\R),P)$ where $P$ is the parabolic subgroup stabilizing a standard flag $\langle e_1 \rangle \subset \langle e_1,\dots, e_{n+1}\rangle\subset \R^{n+2}$. Equivalently, $P$ is the set of equivalence classes of block upper triangular matrices with block sizes $1$, $n$ and $1$. Cartan geometrically, an integrable Legendrian contact structure is a Legendrian contact structure which is also torsion-free. The Lie algebras of $G$ and $P$ are the $\g$ and $\p$ considered in the previous section. The bundles $E$ and $F$ may be described as $\G_0\times_{G_0}\g_{-1}^E$ and $\G_0\times_{G_0}\g_{-1}^F$.

\begin{example}
 If $(M,[\nabla])$ is a projective manifold, the projectivized cotangent bundle $PT^*M$ inherits a Legendrian contact structure \cite{AsanoInoueSeo}. If $M$ is a surface, the projectivized cotangent bundle is $3$-dimensional and thus automatically integrable. However, if $M$ has dimension at least $3$, the projectivized cotangent bundle has dimension at least $5$ and its canonical Legendrian contact structure is integrable exactly when it is flat, which happens exactly when $(M,[\nabla])$ is projectively flat \cite{CapSlovak2}. 
\end{example}

We use Greek letter indices like $\alpha,\beta$ to refer to basis vectors in $E$, and Greek letter indices with bars like $\overline{\alpha},\overline{\beta}$ to refer to basis vectors in $F$.

\subsection{Tanaka--Webster Connection}
This section seeks to give an elementary definition of the Tanaka--Webster connection in Legendrian contact geometry. The uniqueness proof parallels the explicit construction of the Tanaka--Webster connection in \cite{CapSlovak2}. Given a contact form $\theta$, recall that $r$ is the Reeb field, the kernel of the projection $\pi:TM\to H$ is spanned by $r$, and $L:H\times H\to \R$ is defined by $L(X,Y)=\theta([X,Y])$. Specializing our definition of the Tanaka--Webster connection to integrable Legendrian contact geometry gives the following.
\begin{proposition}[Tanaka--Webster Connection in Legendrian Contact Geometry]
Given an integrable Legendrian contact manifold with a contact form $\theta$, there exists a unique affine connection $\widehat{\nabla}$ such that
\begin{enumerate}[(1)]
    \item $\widehat{\nabla}$ preserves $E$ and $F$,
    \item $\widehat{\nabla} r=0$,
    \item $\widehat{\nabla} L=0$,
    \item for $X,Y\in H$, $\widehat{\nabla}_X Y-\widehat{\nabla}_Y X-\pi([X,Y])=0$,
\item 
 $\widehat{\tau}(E)\subset F$ and $\widehat{\tau}(F)\subset E$.
\end{enumerate}
\end{proposition}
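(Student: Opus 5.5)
The plan is to derive the statement from the general Tanaka--Webster Proposition stated earlier, by translating each abstract condition into one of the concrete conditions (1)--(5), and then to verify uniqueness directly in the spirit of the construction in \cite{CapSlovak2}.

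\textbf{Existence.} Let $\widehat{\nabla}$ be the connection supplied by the general Proposition. It is induced by a principal connection on $\G_0$, and $G_0$ preserves $\g_{-1}^E$ and $\g_{-1}^F$: an element in the identity component cannot swap them, and for $PGL_{n+2}$ the block upper triangular $P$ preserves the flag. Hence $\widehat{\nabla}$ preserves $E=\G_0\times_{G_0}\g_{-1}^E$ and $F$, which is (1). Condition $\widehat{\nabla}\theta=0$ gives (2), and together with Proposition~\ref{prop::PreservesLevi} it gives (3), since $L=\theta\circ\L$. Integrability means the geometry is torsion-free, so the Proposition gives $\widehat{T}^{(1)}=0$; this is (4). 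For (5), recall that $\ker\square$ in homogeneity $0$ consists of the two $\g_0$-irreducible components $\g_{-1}^E\to\g_{-1}^F$ and $\g_{-1}^F\to\g_{-1}^E$. So $\widehat{\tau}\in\ker\square$ says precisely that $\widehat{\tau}$ interchanges $E$ and $F$.

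\textbf{Uniqueness.} Suppose $\widehat{\nabla}'$ also satisfies (1)--(5), and set $D=\widehat{\nabla}'-\widehat{\nabla}$. By (2), $D(\cdot)r=0$. By (1) and (3), $D_Z$ preserves $E$, $F$ and $L$ for every $Z$, and preserves the line spanned by $r$. Thus $D_Z$ is a section of $\End_0(H)$ acting trivially on $Q$. This is exactly the condition for $\widehat{\nabla}'$ to come from a principal connection on $\G_0$ with the same $\theta$. Indeed, $\g_0\cong\gl_n$ modulo the trace condition, acting on $\R^n\oplus(\R^n)^*$ with $L$ the duality pairing, and the subalgebra acting trivially on $\g_{-2}$ is the copy of $\gl_n$ acting on $E$ and dually on $F$.

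\emph{Horizontal directions.} For $X,Y\in H$, condition (4) gives $D_XY=D_YX$. So $D|_H$ is a symmetric map $H\times H\to H$ taking values in $\gl(E)$ acting diagonally. Restricted to $E\times E$, the output lies in $E$ and is symmetric. For $X\in E$, $Y\in F$, we have $D_XY\in F$ and $D_YX\in E$, and symmetry forces both to vanish. Combining $D_XY=0$ for $X\in E$, $Y\in F$ with compatibility with the pairing $L$ between $E$ and $F$ gives $L(D_XX',Y)=-L(X',D_XY)=0$. Hence $D_XX'=0$ for $X,X'\in E$, and similarly on $F$. This is the statement $H^1$-type vanishing that $\g_1\otimes\g_0\to$ torsion is injective on these components, and the direct argument above suffices.

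\emph{Reeb direction.} The map $D_r\in\End_0(H)$ changes $\widehat{\tau}$ to $\widehat{\tau}+D_r$. By (5), $D_r$ must map $E\to F$, but it also preserves $E$ and $F$, so $D_r=0$.

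I expect the main obstacle to be the careful identification of (1)--(3) with "induced by a principal connection on $\G_0$ preserving $\theta$". In particular, one has to check that the subalgebra of endomorphisms of $H$ preserving $E$, $F$ and $L$ is exactly the image of the part of $\g_0$ acting trivially on $\g_{-2}$. The remaining verifications are linear algebra.
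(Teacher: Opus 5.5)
Your proposal is correct and follows essentially the same route as the paper. Existence is read off from the general Tanaka--Webster proposition, and you also supply the justification of (3) via Proposition~\ref{prop::PreservesLevi}, which the paper leaves implicit. Uniqueness via the difference tensor $D$ uses the paper's three steps: (4) on mixed $E\times F$ pairs, $L$-compatibility for same-type pairs, and (1) with (5) in the Reeb direction. The paper phrases these as explicit formulas for $\widehat{\nabla}$ rather than as $D=0$. The identification of (1)--(3) with principal connections on $\G_0$ that you flag as the main obstacle is not needed, since your argument uses only (1)--(5) directly.
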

\begin{proof}
Let $\widehat{\nabla}$ be the Tanaka--Webster connection associated to a parabolic contact geometry. Because it is induced by a principal connection on $\G_0$, it preserves $E$ and $F$ and $\langle r \rangle$. Since it preserves $\theta$, it must preserve $r$. The fourth condition holds because an integrable Legendrian contact structure is torsion-free and this condition says $\widehat{T}^{(1)}|_{H\times H}=0$ and the fifth condition corresponds to the condition $\widehat{\tau}\in (\ker \square)_0\subset \End_0(H)$. This establishes existence.

Now we prove uniqueness. Since $\widehat{\nabla}r=0$, it suffices to consider $\widehat{\nabla}X$ for $X\in H$. First we prove uniqueness of the connection in contact directions. Suppose $X$ is a vector field in $E$ and $Y$ is a vector field in $F$. Then since $\widehat{\nabla}_XY-\widehat{\nabla}_YX=\pi([X,Y])$ it follows that $\widehat{\nabla}_X Y$ must be the $F$ component of $\pi([X,Y])$ and $\widehat{\nabla}_Y X$ must be the $E$ component of $\pi([X,Y])$. Now suppose without loss of generality that $X$ and $Y$ are both vector fields in $E$. Then for $Z$ a vector field in $F$, 
\[
L(\widehat{\nabla}_XY,Z)
=XL(Y,Z)-L(Y,\widehat{\nabla}_X Z).
\]
Since $\widehat{\nabla}_X Z$ is determined and $L$ is nondegenerate, this determines $\widehat{\nabla}_XY$.

Next we prove uniqueness in the Reeb direction. Suppose without loss of generality that $X\in E$. Then
\[
\widehat{\nabla}_rX=[r,X]+\widehat{\tau}(X).
\]
Since $\widehat{\tau}(X)\in F$, $\widehat{\nabla}_rX$ must be the $E$ component of $[r,X]$.
\end{proof}

\subsection{Modified Ricci Tensor}
Now we determine the modified Ricci tensor from the $A_l$ case of Theorem~\ref{thm::rhoTensor}. Recall that $\widehat{R}_{ijk}^{\ \ \ l}$ is the curvature tensor of the Tanaka--Webster connection and $\widehat{\Ric}_i^{\ j}=\frac{1}{2}\widehat{R}_{k \ i}^{\ k \ j}$. This tensor preserves the distributions $E$ and $F$. The complexification $H^{\C}$ of the contact distribution decomposes as $H^{\C}=E^{\C}\oplus F^{\C}$. The distributions $E^{\C}$ and $F^{\C}$ are in fact $\E$ and $\E$ from the setup to the theorem in the $A_l$ case. 

The complex trace of $\widehat{\Ric}$ acting on $E^{\C}$ is exactly the real trace of $\widehat{\Ric}$ acting on $E$. Therefore we may define $\widehat{R}$ as the trace of $\widehat{\Ric}$ acting on $E$. Furthermore, the endomorphism $\iota$ acting by $+1$ on $E^{\C}$ and $-1$ on $F^{\C}$ restricts to an endomorphism acting by $+1$ on $E$ and $-1$ on $F$. Then by Theorem~\ref{thm::rhoTensor} the modified Ricci tensor $\rho:H\to H$ is given by 
\[
\rho(X)=\frac{1}{n+2}\left(\widehat{\Ric}(X)-\frac{\widehat{R}}{2(n+1)}\iota(X) \right).
\]
\subsection{Harmonic Components}
The purpose of this subsection is to describe the harmonic components in Legendrian contact geometry. While this is also done in \cite{CapSlovak2}, we require slightly more information about the harmonic component in higher dimensions.

We will work with the complexified Lie algebra $\g^{\C}=\sl_{n+2}$ and grading, using $H^*_{\R}(\g_-,\g)^{\C}=H^*_{\C}(\g_-^{\C},\g^{\C})$ and the fact that Kostant's Borel--Weil--Bott computes the $\g_0^{\C}$-irreducible components in $H^*_{\C}(\g_-^{\C},\g^{\C})$, as explained in \cite{CapSlovak2}. Denote the reflection in $\h^*$ along the hyperplane orthogonal to $\alpha_i$ by $s_i$. Since $\g$ is split, the lowest weight elements we identify in the harmonic components will be contained in the real form $H^*_{\R}(\g_-,\g)$, and thus it does not make much of a difference whether we work with $\g$ or $\g^{\C}$. 

Recall that $I$ denotes the set of simple roots inducing the grading. In this section $\flat$ denotes the isomorphism $\p_+\cong \g_-^*$ induced by the fixed $\g$-invariant bilinear form, and $\eta_{\alpha}$ for $\alpha\in \Delta$ denotes a nonzero element of the root space $\g_\alpha$. If $s_is_j\in W^I$ is a length $2$ element of the associated Hasse diagram, Kostant's Borel--Weil--Bott implies there is a corresponding lowest weight representative of a $\g_0$-irreducible component in $H^2(\g_-,\g)=\ker \square\subset C^2(\g_-,\g)$ given by
\[
\eta_{\alpha_i}^{\flat}\wedge \eta_{s_i(\alpha_j)}^{\flat}\otimes \eta_{-s_is_j(\mu)}.
\]
Now assume that $\g=\sl_{n+2}$ with the standard contact grading. The analysis of harmonic curvature components is significantly different when $n=1$, corresponding to $3$-dimensional Legendrian contact geometry, and when $n>1$, corresponding to higher-dimensional Legendrian contact geometries. If $n=1$, then $\g=\sl_3$, which has rank $2$. The set of simple roots inducing the contact grading is $I=\{\alpha_1,\alpha_2\}$, and the length $2$ Hasse diagram elements are $W^{I}(2)=\{s_1s_2,s_2s_1\}$. Fix the convention that for $\alpha$ a root, $\eta_{\alpha}$ is a nonzero element of $\g_\alpha$. The lowest weight representatives of the two harmonic components in $H^2(\g_-,\g)$ are 
\[
\eta_{\alpha_1}^{\flat}\wedge \eta_{s_1(\alpha_2)}^{\flat} \otimes \eta_{-s_1s_2(\mu)}
=\eta_{\alpha_1}^{\flat}\wedge \eta_{\alpha_1+\alpha_2}^{\flat} \otimes \eta_{\alpha_1}
\]
and swapping the indices $1$ and $2$,
\[
\eta_{\alpha_2}^{\flat}\wedge \eta_{\alpha_1+\alpha_2}^{\flat} \otimes \eta_{\alpha_2}.
\]
These representatives correspond to maps $Q\times E\to E^*$ and $Q\times F\to F^*$. These harmonic components will give us the tensors $Q_{11}$ and $Q_{\1\1}$.

If $n>1$, then $\g=\sl_{n+2}$ has rank larger than $2$, the chosen simple roots are $I=\{\alpha_1,\alpha_{n+1}\}$, and $W^I(2)=\{s_1s_2,s_{n+1}s_n,s_1s_{n+1}\}$. The lowest weight representatives of the three harmonic components in $H^2(\g_-,\g)$ are 
\[
\eta_{\alpha_1}^{\flat}\wedge \eta_{s_1(\alpha_2)}^{\flat} \otimes \eta_{-s_1s_2(\mu)}
=\eta_{\alpha_1}^{\flat}\wedge \eta_{\alpha_1+\alpha_2}^{\flat} \otimes \eta_{\alpha_1-\mu}
\]
and applying the nontrivial Dynkin diagram automorphism,
\[
\eta_{\alpha_{n+1}}^{\flat}\wedge \eta_{\alpha_n+\alpha_{n+1}}^{\flat} \otimes \eta_{\alpha_{n+1}-\mu},
\]
and finally
\[
\eta_{\alpha_1}^{\flat}\wedge \eta_{s_1(\alpha_{n+1})}^{\flat}\otimes \eta_{-s_1s_{n+1}(\mu)}
=\eta_{\alpha_1}^{\flat}\wedge \eta_{\alpha_{n+1}}^{\flat}\otimes \eta_{\alpha_1+\alpha_{n+1}-\mu}.
\]
These three representatives correspond to maps $E\times E\to F$, $F\times F\to E$ and $E\times F\to \End_0(H)$. The first two are torsions, and the third will correspond to our tensor $C_{\alpha\overline{\beta}\gamma\overline{\sigma}}$. The main reason we have redone these computations instead of relying on \cite{CapSlovak2} is to point out that $\eta_{\alpha_1+\alpha_{n+1}-\mu}\in \g_0^{ss}$, and therefore acts trivially on $\g_{-2}$. The corresponding harmonic component $E\times F\to \End_0(H)$ has output acting trivially on $Q$, and therefore acts on $H$ by transformations preserving $L$.
\subsection{Harmonic Curvature in Dimension $3$}
As discussed above, harmonic components in dimension $3$ correspond to maps $Q\times E\to E^*$ and $Q\times F\to F^*$. The distributions $E$ and $F$ are $1$-dimensional, with basis vectors $X_1$ and $X_{\1}$. Observe that $\rho_1^{\ 1}=\frac{\widehat{R}}{4}$ and $\rho_{\overline{1}}^{\ \overline{1}}=-\frac{\widehat{R}}{4}$.
Since $\P^{(4)}(r)$ corresponds to a multiple of $\phi$,
\[
\partial \P^{(4)}_{011}
=-\{X_1,\P^{(4)}(r)\}(X_1)
\]
is a multiple of 
\[
\{X_1,\phi\}(X_1)=-X_1^{\flat}(X_1)=0.
\]
Since $\tau_{11}=\widehat{\tau}_{11}$,
\begin{align*}
\nabla_0 \tau_{11}
&=\widehat{\nabla}_0 \tau_{11}
+L(\tau(\rho(X_1)),X_1)+L(\tau(X_1),\rho(X_1))\\
&=\widehat{\nabla}_0\tau_{11}+\frac{\widehat{R}}{4}(\tau_{11}+\tau_{11})\\
&=\widehat{\nabla}_0 \widehat{\tau}_{11}+\frac{\widehat{R}}{2}\widehat{\tau}_{11}.
\end{align*}
Since $(\widehat{\tau}\circ\widehat{\tau})_{11}=(\rho\circ \rho)_{11}=0$,
\begin{align*}
(\tau\circ \tau)_{11}
&=-(\rho\circ \widehat{\tau}+\widehat{\tau}\circ \rho)_{11}\\
&=\frac{\widehat{R}}{4}\widehat{\tau}_{11}
-\frac{\widehat{R}}{4}\widehat{\tau}_{11}\\
&=0.
\end{align*}
Since $\nabla_1=\widehat{\nabla}_1$ and $\nabla_{\1}=\widehat{\nabla}_{\1}$,
\begin{align*}
\kappa_{011}
&=Y_{011}+\partial \P_{011}\\
&=Y_{011}\\
&=-\nabla_0 \tau_{11}
+\frac{2}{3}
\nabla_1\nabla_k\tau_{1}^{\ k}-(\tau\circ \tau)_{11}\\
&=-\widehat{\nabla}_0 \widehat{\tau}_{11}
-\frac{2}{3}
\widehat{\nabla}_1\widehat{\nabla}_{1}\rho_{1}^{\ 1}
+\frac{2}{3}
\widehat{\nabla}_1\widehat{\nabla}_{\1}\widehat{\tau}_{1}^{\ \overline{1}}
-\frac{\widehat{R}}{2}\widehat{\tau}_{11}\\
&=-\widehat{\nabla}_0 \widehat{\tau}_{11}
-\frac{1}{6}
\widehat{\nabla}_1\widehat{\nabla}_1\widehat{R}
+\frac{2}{3}
\widehat{\nabla}_1\widehat{\nabla}_{\overline{1}}\widehat{\tau}_1^{\1}
-\frac{\widehat{R}}{2}
\widehat{\tau}_{11}.
\end{align*}
Define 
\begin{equation}\label{eqn::Umbilical1}
    Q_{11}=\kappa_{011}=-\widehat{\nabla}_0 \widehat{\tau}_{11}
-\frac{1}{6}
\widehat{\nabla}_1\widehat{\nabla}_1\widehat{R}
+\frac{2}{3}
\widehat{\nabla}_1\widehat{\nabla}_{\overline{1}}\widehat{\tau}_1^{\1}
-\frac{\widehat{R}}{2}
\widehat{\tau}_{11}.
\end{equation}
Similarly, define
\begin{equation}\label{eqn::Umbilical2}
    Q_{\1\1}=\kappa_{0\1\1}=-\widehat{\nabla}_0 \widehat{\tau}_{\1 \1}
+\frac{1}{6}
\widehat{\nabla}_{\1}\widehat{\nabla}_{\1}\widehat{R}
+\frac{2}{3}
\widehat{\nabla}_{\1}\widehat{\nabla}_{1}\widehat{\tau}_{\1}^{\ 1}
+\frac{\widehat{R}}{2}
\widehat{\tau}_{\1 \1}.
\end{equation}
These are the two harmonic curvature components. A regular normal parabolic Cartan geometry is flat if and only if its harmonic curvature vanishes. Part (a) of Theorem~\ref{thm::HarmonicCurvature} now follows.
\subsection{Harmonic Curvature in Higher Dimensions}
Since an integrable Legendrian contact structure is torsion-free, the $E\times E\to F$ and $F\times F\to E$ components of harmonic curvature vanish. The remaining component consists of maps $E\times F\to \End_0(H)$ whose values in $\End_0(H)$ act on $H$ in a way that preserves $L$. Let $X\in E$ and $Y\in F$. Then
\[
\kappa_0(X,Y)
=R(X,Y)+\partial \P^{(2)}(X,Y).
\]
It is a standard result that the nonzero curvature component of lowest homogeneity is automatically harmonic. Since curvature vanishes in homogeneity $1$, the component $\kappa_0$ in homogeneity $2$ is harmonic. Therefore it is valued in the part of $\End_0(H)$ that preserves $L$. Since $L$ induces an isomorphism $F\cong E^*$ and $\kappa_0(X,Y)$ preserves $E$ and $F$, the action of $\kappa_0(X,Y)$ on $F$ can be recovered from its action on $E$. Then, to show vanishing of $\kappa_0(X,Y)\in \End_0(H)$, it suffices to show that it acts trivially on $E$. 
Observe that
\[
\nabla_{[X,Y]}Z
=\nabla_{\pi([X,Y])}Z
+\theta([X,Y])\nabla_r Z
=\widehat{\nabla}_{[X,Y]}Z-L(X,Y)\rho(Z).
\] 
Therefore
\begin{align*}
R(X,Y)Z
&=\nabla_X \nabla_Y Z -\nabla_Y \nabla_XZ-\nabla_{[X,Y]}Z\\
&=\widehat{\nabla}_X \widehat{\nabla}_Y Z 
-\widehat{\nabla}_Y \widehat{\nabla}_X Z
-\widehat{\nabla}_{[X,Y]}Z
+L(X,Y)\rho(Z)\\
&=\widehat{R}(X,Y)Z+L(X,Y)\rho(Z).
\end{align*}
For the other term,
\[
\partial \P^{(2)}(X,Y)
=\{X,\P^{(2)}(Y)\}-\{Y,\P^{(2)}(X)\}
=\{X,(\rho-\widehat{\tau})(Y)^{\flat}\}
-\{Y,(\rho-\widehat{\tau})(X)^{\flat}\}.
\]
Since $X\in E$ and $\widehat{\tau}(Y)^{\flat}\in F^*$,
\[
\{X,\widehat{\tau}(Y)^{\flat}\}=0.
\]
Similarly,
\[
\{Y,\widehat{\tau}(X)^{\flat}\}=0.
\]
Let $Z\in E$. Then
\[
\{X,\rho(Y)^{\flat}\}(Z)
=X\rho(Y)^{\flat}(Z)
+Z\rho(Y)^{\flat}(X)
\]
and
\[
\{Y, \rho(X)^{\flat}\}(Z)
=\rho(X)Y^{\flat}(Z).
\]
Thus
\begin{align*}
\partial (\P^{(2)})_{\alpha \overline{\beta} \gamma}^{\ \ \ \ \sigma}
&=\delta_{\alpha}^{\ \sigma}\rho_{\overline{\beta}\gamma}
+\delta_{\gamma}^{\ \sigma}\rho_{\overline{\beta}\alpha}
-L_{\overline{\beta}\gamma}\rho_{\alpha}^{\ \sigma}.
\end{align*}
Putting it all together,
\[
\kappa_{\alpha \overline{\beta}\gamma}^{\ \ \ \ \sigma}
=\widehat{R}_{\alpha \overline{\beta} \gamma}^{\ \ \ \ \sigma}+L_{\alpha \overline{\beta}}\rho_{\gamma}^{\ \sigma}
+\delta_{\alpha}^{\ \sigma}\rho_{\overline{\beta}\gamma}
+\delta_{\gamma}^{\ \sigma}\rho_{\overline{\beta}\alpha}
+L_{\gamma\overline{\beta}}\rho_{\alpha}^{\ \sigma}.
\]
This tensor vanishes if and only if the harmonic curvature component vanishes, which happens if and only if the corresponding Cartan geometry is flat.
Lowering an index, define
\begin{equation}\label{eqn::ChernMoser}
C_{\alpha\overline{\beta}\gamma\overline{\sigma}}=\kappa_{\alpha \overline{\beta}\gamma \overline{\sigma}}
=\widehat{R}_{\alpha \overline{\beta} \gamma \overline{\sigma}}
+L_{\alpha \overline{\beta}}\rho_{\gamma \overline{\sigma}}
+L_{\alpha \overline{\sigma}}\rho_{\gamma\overline{\beta}}
+L_{\gamma \overline{\sigma}}\rho_{\alpha\overline{\beta}}
+L_{\gamma \overline{\beta}}\rho_{\alpha \overline{\sigma}}.
\end{equation}
Part (b) of Theorem~\ref{thm::HarmonicCurvature} now follows.

\appendix
\section{Kostant Laplacian Eigenvalues}
\subsection{General Formula}
The Kostant Laplacian $\square$ is unchanged if we work with the complexification of $\g$, so we will assume $\g$ is complex for the remainder of this subsection. Let $\delta$ be the \emph{Weyl vector}, defined by
\[
\delta=\sum \lambda_i=\frac{1}{2}\sum_{\alpha\in \Delta^+} \alpha.
\]
The Kostant Laplacian $\square$ acts by a scalar on $\g_0$-irreducible subrepresentations in $C^*(\g_-,\g)$. We can quantify these scalars precisely. Let $V^{\nu}\subset C^*(\g_-,\g)$ be a $\g_0$-irreducible representation whose dual has highest $\g_0$ weight $\nu$. Recall that $\mu$ is the highest root of $\g$. Then $\square$ multiplies the elements of $V^{\nu}$ by the scalar
\begin{equation}\label{eqn::KostantScalar}
\frac{1}{2}(|\nu|^2-|\mu|^2)+\langle \nu-\mu, \delta\rangle.
\end{equation}
\subsection{For $1$-cochains}\label{app:1Cochains}
The Kostant Laplacian acts by a scalar on each $\g_0$-irreducible component in $C^*(\g_-,\g)$, according to equation \eqref{eqn::KostantScalar}. This section computes this scalar for several isotypic components or submodules, and these results are later used to compute the Rho tensor according to Proposition~\ref{prop::RhoTensor}. These constants admit a uniform description in terms of the dimension of the parabolic contact manifold and otherwise do not depend on the choice of simple Lie algebra $\g$. Recall that $n=\frac{1}{2}\dim(\g_{-1})$.
\begin{proposition}
$\langle \mu, \delta \rangle=n+1$.
\end{proposition}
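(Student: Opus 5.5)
The plan is to compute $\langle \mu,\delta\rangle$ directly from the half-sum formula $\delta=\frac{1}{2}\sum_{\alpha\in\Delta^+}\alpha$. This gives
\[
\langle \mu,\delta\rangle=\frac{1}{2}\sum_{\alpha\in\Delta^+}\langle \mu,\alpha\rangle .
\]
Each term $\langle\mu,\alpha\rangle$ will then be read off as a grading degree, using the normalization fixed in the Preliminaries.

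\textbf{Step 1.} Since $H_\mu$ is the element of $\h$ dual to $\mu$, we have $\langle \mu,\alpha\rangle=\langle H_\mu,H_\alpha\rangle=\alpha(H_\mu)$ for every root $\alpha$. With our normalization $|E|^2=2$ we showed $|\mu|^2=2$ and $E=H_\mu$. Hence
\[
\langle\mu,\alpha\rangle=\alpha(E).
\]
Because $[E,X]=iX$ for $X\in\g_i$, the number $\alpha(E)$ is exactly the degree $j$ of the graded piece $\g_j$ containing the root space $\g_\alpha$.

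\textbf{Step 2.} Sort the positive roots by degree. The grading comes from the subset $I$ of simple roots via $I$-height, so every positive root has degree $0$, $1$ or $2$, and no positive root has negative degree. Degree-$0$ roots contribute nothing to the sum. Since $\h\subset\g_0$, the space $\g_1$ is the direct sum of the root spaces $\g_\alpha$ with $\alpha(E)=1$, and all these roots are positive. Root spaces are one-dimensional, so there are exactly $\dim\g_1=\dim\g_{-1}=2n$ such roots, each contributing $1$. Finally $\dim\g_2=1$ and $\g_2\supseteq\g_\mu$, so $\mu$ is the only positive root of degree $2$, and it contributes $\mu(E)=\langle\mu,\mu\rangle=2$.

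\textbf{Step 3.} Adding these contributions gives
\[
\langle\mu,\delta\rangle=\frac{1}{2}(2n+2)=n+1 .
\]

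The only point needing care is the bookkeeping in Step 2: the roots of degree $1$ are precisely the weights of $\g_1$, each with multiplicity one. This follows from the root space decomposition together with $\h\subset\g_0$, after passing to the complexification. The paper already permits this reduction, since $\square$ and these scalars are unchanged by complexifying. No other obstacle is expected.
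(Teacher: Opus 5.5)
Your proof is correct and is essentially the paper's argument. Both expand $\delta$ as the half-sum of positive roots, identify $\langle\mu,\alpha\rangle=\alpha(E)$ with the grading degree, and count the $\dim\g_1=2n$ roots of degree one plus the single root $\mu$ of degree two. You additionally spell out the root-space bookkeeping (one-dimensional root spaces, $\h\subset\g_0$, passing to the complexification) that the paper leaves implicit.
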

\begin{proof}
    If $\alpha$ is a root, $\langle \mu, \alpha\rangle=\alpha(E)$ which is either $0,1$ or $2$ depending on the homogeneity in which $\g_{\alpha}$ appears. Then
    \[
    \langle \mu, \delta\rangle
    =\frac{1}{2}\sum_{\alpha\in \Delta^+}
    \langle \mu, \alpha\rangle
    =\frac{1}{2}(\dim(\g_1)+2)
    =n+1.
    \]
\end{proof}

\begin{proposition}\label{prop::KostantEigenvalues}
\begin{enumerate}[(a)]
\item On $\g_{-1}^*\otimes \g_2$ the Kostant Laplacian $\square$ acts by $-(2n+1)$. 
\item On $\g_{-2}^*\otimes \g_1$ the Kostant Laplacian $\square$ acts by $-(2n+1)$.
\item On $\g_{-2}^*\otimes \g_2$ the Kostant Laplacian $\square$ acts by $-3n$.
\end{enumerate}
\end{proposition}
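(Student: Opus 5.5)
The plan is to evaluate the general scalar formula \eqref{eqn::KostantScalar} directly in each case. Work over $\C$ and use the normalization $|\mu|^2=2$, $E=H_\mu$. Use the previous proposition, $\langle\mu,\delta\rangle=n+1$. For each of the three modules, identify the dual representation and its highest $\g_0$-weight $\nu$, then substitute. By the invariant form, $\g_{-1}^*\otimes\g_2\cong\g_1\otimes\g_2$ and $\g_{-2}^*\otimes\g_1\cong\g_2\otimes\g_1$. So (a) and (b) are the same $\g_0$-module, with dual $\g_{-1}\otimes\g_{-2}$, and it suffices to treat (a) and (c).

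\textbf{Weights.} The weights of $\g_{-2}$ are just $-\mu$, since $\g_{-2}$ is one-dimensional. The weights of $\g_{-1}$ are $-\beta$ for the positive roots $\beta$ of grade $1$. A highest weight vector for the $\g_0$-action, with respect to the Borel of $\g_0$ given by the positive roots of $\g_0$, corresponds to a $\beta$ from which no simple root of $\g_0$ can be subtracted. Such a $\beta$ is a simple root $\alpha\in I$.

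When $\g_{-1}$ is irreducible there is one such $\alpha$. In the $A_l$ case $\g_{-1}=\g_{-1}^{\E}\oplus\g_{-1}^{\F}$ splits, with highest weights $-\alpha_1$ and $-\alpha_l$. I will treat each summand separately, since the computation below does not depend on which $\alpha$ occurs. Tensor products with the one-dimensional $\g_{-2}$ stay irreducible. Hence the dual of the module in (a) has highest weight $\nu=-\mu-\alpha$, and the dual in (c) has highest weight $\nu=-2\mu$.

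\textbf{Two facts.} First, $\langle\mu,\alpha\rangle=\alpha(E)=1$ because $\g_\alpha\subset\g_1$ and $E=H_\mu$. Second, for a simple root, $\langle\alpha,\delta\rangle=\tfrac12|\alpha|^2$.

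\textbf{Case (a).} With $\nu=-\mu-\alpha$,
\[
|\nu|^2=|\mu|^2+2\langle\mu,\alpha\rangle+|\alpha|^2=4+|\alpha|^2,
\]
and
\[
\langle\nu-\mu,\delta\rangle=-2(n+1)-\tfrac12|\alpha|^2 .
\]
Substituting into \eqref{eqn::KostantScalar} gives
\[
\tfrac12(2+|\alpha|^2)-2(n+1)-\tfrac12|\alpha|^2=-(2n+1).
\]
The root length cancels, so this holds uniformly, including the $C_l$ and $G_2$ cases where $\alpha$ may be short.

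\textbf{Case (c).} With $\nu=-2\mu$, the scalar is
\[
\tfrac12(8-2)-3(n+1)=-3n .
\]

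\textbf{Main obstacle.} The real risk is conventions, not computation. I need to confirm three things: that the ``dual has highest weight $\nu$'' convention of \eqref{eqn::KostantScalar} matches the identification of cochain spaces with $\g_-$-tensor products; that $\g_{-1}$, or each of its two summands in the $A_l$ case, is $\g_0$-irreducible, so that $\square$ is a single scalar there; and that the highest weight of $\g_{-1}$ is $-\alpha$ for simple $\alpha\in I$ rather than $-(\mu-\alpha)$. As a sanity check I would redo (a) with the opposite choice, $\nu=-\mu-(\mu-\alpha)$, and check against a known case, $\sl_3$ with $n=1$, where the eigenvalue should be $-3$.
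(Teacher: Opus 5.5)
Your proposal is correct and follows essentially the same route as the paper: identify the duals $\g_{-1}\otimes\g_{-2}$ and $\g_{-2}\otimes\g_{-2}$, with highest weights $-(\mu+\alpha)$ for a simple root $\alpha\in I$ and $-2\mu$, reduce (b) to (a) via $\g_{-2}^*\otimes\g_1\cong\g_{-1}^*\otimes\g_2$, and substitute into the Kostant scalar formula using $\langle\mu,\delta\rangle=n+1$ and $\langle\alpha,\delta\rangle=\tfrac12|\alpha|^2$, so that the root length cancels. One small aside: your proposed $\sl_3$ sanity check would not distinguish $\nu=-\mu-\alpha$ from $\nu=-2\mu+\alpha$, because both give $-3$ when $n=1$ (in type $A$ the wrong choice gives $-3n$ in general); the choice you actually use in the proof is the correct one.
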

\begin{proof}
    \begin{enumerate}[(a)]
    \item The dual is $(\g_{-1}^*\otimes \g_2)^*\cong \g_{-1}\otimes \g_{-2}$. A lowest weight in $\g_1$ is a simple root $\alpha_k$ such that $\langle \alpha_k, \mu \rangle=1$. Then a highest weight in $\g_{-1}$ is $-\alpha_k$. A highest weight in $\g_{-1}\otimes \g_{-2}$ is $-(\alpha_k+\mu)$. We have
\[
|-(\alpha_k+\mu)|^2-|\mu|^2
=|\alpha_k|^2+2\langle \alpha_k, \mu \rangle
=|\alpha_k|^2+2
\]
and
\[
\langle -\alpha_k-2\mu, \delta \rangle
=-\langle \alpha_k, \delta\rangle
-2(n+1)
=-\frac{1}{2}|\alpha_k|^2-2(n+1).
\]
Using equation \eqref{eqn::KostantScalar}, the action of $\square$ is given by
\[
\frac{1}{2}(|\alpha_k|^2+2)-\frac{1}{2}|\alpha_k|^2-2(n+1)
=-(2n+1).
\]
    \item Since $\g_{-2}^*\otimes \g_1 \cong \g_{-1}^*\otimes \g_2$, this subrepresentation is in the same isotypic component and the action of $\square$ is the same.
    \item  We have $(\g_{-2}^*\otimes \g_2)^*\cong \g_{-2}\otimes \g_{-2}$, which has weight $\nu=-2\mu$. Then
    \[
    |-2\mu|^2-|\mu|^2=3|\mu|^2=6
    \]
    while
    \[
    \langle -2\mu-\mu, \delta\rangle
    =-3\langle\mu, \delta \rangle
    =-3(n+1).
    \]
    Then by equation \eqref{eqn::KostantScalar}, $\square$ acts by
    \[
   \frac{1}{2}(6)-3(n+1)=-3n.
    \]
    \end{enumerate}
\end{proof}
\subsection{For $0$-cochains}\label{app::0Cochains}
Consistent with the rest of the paper, we assume the normalization condition $|\mu|^2=2$, where $\mu$ is the highest root. This is equivalent to $|E|^2=2$, where $E$ is the grading element relative to the corresponding contact gradings. Figure \ref{fig::DynkinDiagrams} shows the Dynkin diagrams with the value of $|\alpha_i|^2$ written next to the node $\alpha_i$.

\begin{lemma}\label{lemma::NormSame}
Let $\g$ be a contact-graded complex simple Lie algebra. Let $\s\subset \g_0$ be a simple ideal of highest $\g_0$ weight $\nu$. If $\g$ is not isomorphic to $B_3$ or $G_2$ then $|\nu|^2=2$.
\end{lemma}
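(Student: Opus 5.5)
The plan is to identify $\nu$ with the highest root of $\s$, and to reduce the claim to showing that $\s$ contains a root of $\g$ that is long in $\g$.

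First, $\s$ is a simple ideal of $\g_0^{ss}$, and it is spanned by a Cartan part in $\h'$ together with root spaces $\g_\alpha$ for the roots $\alpha$ of $\g$ with $I$-height $0$ lying in one connected component $D_\s$ of the Dynkin diagram with the crossed nodes removed. The adjoint action of $\g_0$ on $\s$ factors through $\s$, with the center acting trivially. Hence the highest $\g_0$ weight $\nu$ of $\s$ is the highest root of the root subsystem spanned by the simple roots in $D_\s$. This root subsystem is a subset of $\Delta$, and the restriction of the fixed form $\langle\cdot,\cdot\rangle$ to it is an invariant form for $\s$. So lengths of roots of $\s$ are simply their lengths as roots of $\g$.

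Next I would use two standard facts. The highest root of a simple Lie algebra is always a long root of that algebra, so $|\nu|^2\ge|\beta|^2$ for every root $\beta$ of $\s$. Also, the highest root $\mu$ of $\g$ is long, and $|\mu|^2=2$ under our normalization, so every root of $\g$ has $|\beta|^2\le 2$. Consequently, if $D_\s$ contains a simple root $\alpha_i$ with $|\alpha_i|^2=2$, then $2\le|\nu|^2\le 2$, which gives $|\nu|^2=2$.

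It then remains to check, case by case using the crossed nodes in Figure~\ref{fig::DynkinDiagrams}, that every component of the diagram with the crossed nodes removed contains a long simple root, except for $B_3$ and $G_2$. For $A_l$, $D_l$, $E_6$, $E_7$ and $E_8$ all roots have the same length, so this is immediate. For $B_l$ the crossed node is $\alpha_2$. The component $\{\alpha_1\}$ is long, and the component $\{\alpha_3,\dots,\alpha_l\}$ contains the long root $\alpha_3$ as soon as $l\ge4$. When $l=3$ this component is $\{\alpha_3\}$ alone, which is short, and this is the $B_3$ exception. For $C_l$ the crossed node is $\alpha_1$, and the single component $\{\alpha_2,\dots,\alpha_l\}$ contains the long root $\alpha_l$. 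For $F_4$ the crossed node is the long end node adjacent to $\mu$, and the remaining $B_3$-type component still contains a long root. For $G_2$ the crossed node is the long simple root, so the remaining component consists only of the short root; this is the $G_2$ exception.

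The main obstacle I expect is bookkeeping rather than anything conceptual. I need to match the node labeling and the $|\alpha_i|^2$ values in the paper's figure, which determine the contact grading via $\langle\alpha,\mu\rangle\neq0$. In particular, for $F_4$ and $G_2$ I must confirm that the crossed node is the long one adjacent to the affine node, which is what singles out $G_2$ as an exception but not $F_4$.
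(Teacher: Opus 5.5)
Your proposal is correct and takes essentially the same route as the paper. Both identify $\nu$ with the highest root of the subsystem spanned by the component $S$, and then read off from the norms in the Dynkin diagrams that the longest roots in $S$ have norm $2$ except for $B_3$ and $G_2$; your use of ``highest roots are long'' simply makes explicit the paper's comparison of the patterns of $|\alpha_i|^2$. One small slip: for $F_4$ the component left after removing the crossed node has one long and two short simple roots, so it is of type $C_3$ (as the paper states), not $B_3$, but it still contains a long root and your conclusion is unaffected.
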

\begin{proof}
    Proposition 3.2.2 of \cite{CapSlovak2} says that the Dynkin diagram of $\g_0^{ss}$ is obtained by removing the crossed nodes in the Dynkin diagram of $\g$. As a result, the Dynkin diagram of $\s$ is a connected component of the remaining diagram. Let $S\subset \Delta^0$ be the simple roots on this component. Let $\h'$ be the Cartan subalgebra of $\s$. For $\alpha_i\in S$, the restriction $\alpha'_i=\alpha_i|_{\h'}$ is a simple root of $\s$. The roots of $\g$ whose root spaces are contained in $\s$ are those contained in the span of $S$. Then $\nu$ is contained in the span of $S$. Furthermore, $\nu|_{\h'}=\mu_{\s}$ is the highest root of $\s$. These two conditions uniquely characterize $\nu$ as $\sum c_i \alpha_i$ where $\sum c_i \alpha'_i=\mu_{\s}$. 
    
    Because we know that the Dynkin diagram of $\s$ is the Dynkin diagram of $\g$ restricted to $S$, the Cartan integers between the simple roots $\alpha'_i$ of $\s$ are the same as those between the corresponding $\alpha_i\in S$. Since $\g$ and $\s$ are simple Lie algebras, a choice of $\g$-invariant or $\s$-invariant bilinear form on $\g$ or $\s$ is unique up to scale, so the equality of Cartan integers
    \[
    \frac{2\langle \alpha'_i,\alpha'_j\rangle}{|\alpha'_i|^2}=\frac{2\langle \alpha_i,\alpha_j\rangle}{|\alpha_i|^2} 
    \]
    is still true when we replace the Killing inner products by the unique invariant bilinear forms for which $|\mu|^2=2$ and $|\mu_{\s}|^2=2$. 

    By the existence of a simple ideal in $\g_0$, the Lie algebra $\g$ is not isomorphic to $A_2$. It follows from an inspection of Figure \ref{fig::DynkinDiagrams} that if $\g$ is not isomorphic to $B_3$ or $G_2$ then if we remove a crossed node and consider a connected component $S$, the pattern of values of $|\alpha_i|^2$ is the same as that in the corresponding Dynkin diagram. This means that for $\alpha_i\in S$ we get $|\alpha_i|^2=|\alpha'_i|^2$. Because the Cartan integers and simple root norms for $\alpha_i\in S$ relative to $\g$ are the same as those of $\alpha'_i$ relative to $\s$, it follows that
    \begin{align*}
    |\nu|^2
    &=|\sum c_i \alpha_i|^2\\
    &=|\sum c_i \alpha'_i|^2\\
    &=|\mu_{\s}|^2\\
    &=2.
    \end{align*}
\end{proof}
Using the formula
\[
n=\langle \mu, \delta \rangle-1,
\]
we compute the following well-known values for the exceptional simple Lie algebras. Recall that $n=\frac{1}{2}\dim(\g_{-1})$ relative to the unique contact grading on $\g$.
\begin{figure}
\includegraphics[scale=.14]{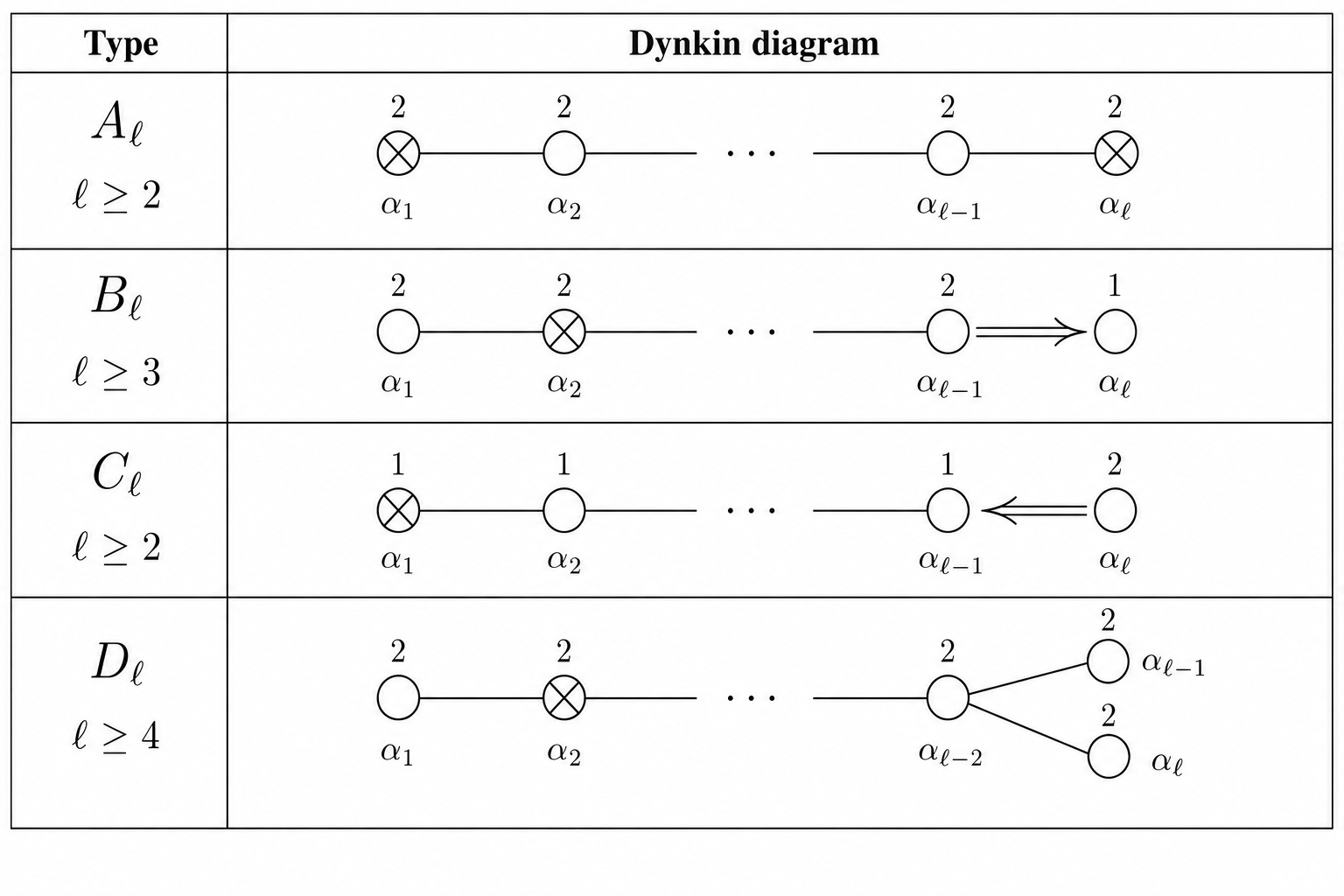}
\includegraphics[scale=.14]{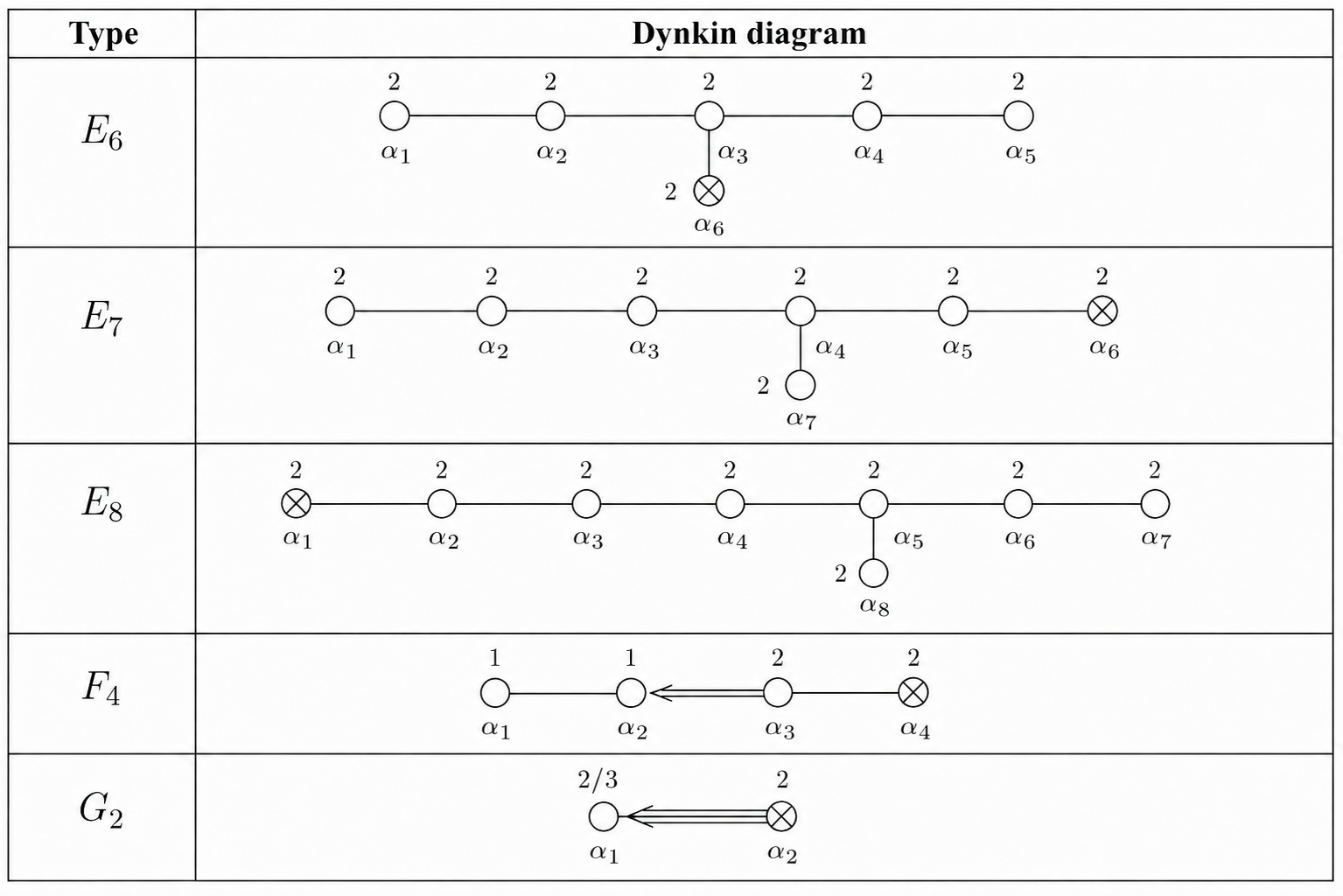}
\caption{Dynkin Diagrams}
\label{fig::DynkinDiagrams}
\end{figure}
\begin{proposition}
In the exceptional simple Lie algebras, $n$ takes the following values.
\begin{itemize}
    \item If $\g\cong E_6$, $n=10$, 
    \item if $\g\cong E_7$, $n=16$,
    \item if $\g\cong E_8$, $n=28$,
    \item if $\g\cong F_4$, $n=7$,
    \item if $\g\cong G_2$, $n=2$.
\end{itemize}
\end{proposition}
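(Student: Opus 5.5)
The plan is to use the formula $n=\langle \mu,\delta\rangle-1$ established above (from $\langle\mu,\delta\rangle=n+1$), and to compute $\langle\mu,\delta\rangle$ directly for each exceptional algebra. Write $\mu=\sum_i m_i\alpha_i$ in terms of simple roots, with the standard highest-root coefficients. Since $\delta=\sum\lambda_j$ and $\langle\alpha_i,\lambda_j\rangle=\frac12|\alpha_i|^2\delta_{ij}$, we get
\[
\langle\mu,\delta\rangle=\frac12\sum_i m_i|\alpha_i|^2 .
\]
This requires only the highest-root coefficients and the root norms normalized so that $|\mu|^2=2$, which are exactly the values recorded in Figure~\ref{fig::DynkinDiagrams}.

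For the simply laced cases all roots have norm $2$, so $\langle\mu,\delta\rangle$ equals the height of $\mu$, which is $h-1$ for the Coxeter number $h$. This gives $n=h-2$:
\begin{itemize}
\item $E_6$: $h=12$, so $n=10$;
\item $E_7$: $h=18$, so $n=16$;
\item $E_8$: $h=30$, so $n=28$.
\end{itemize}
For $F_4$ and $G_2$ the highest root is long, so long roots have norm $2$. The short roots have norm $1$ for $F_4$ and $2/3$ for $G_2$. Then $\langle\mu,\delta\rangle$ is the dual Coxeter number minus $1$:
\begin{itemize}
\item $F_4$: $\mu=2\alpha_1+3\alpha_2+4\alpha_3+2\alpha_4$ with $\alpha_1,\alpha_2$ long, so $\langle\mu,\delta\rangle=\frac12(2\cdot2+3\cdot2+4\cdot1+2\cdot1)=8$ and $n=7$;
\item $G_2$: $\mu=3\alpha_1+2\alpha_2$ with $\alpha_1$ short, so $\langle\mu,\delta\rangle=\frac12(3\cdot\tfrac23+2\cdot2)=3$ and $n=2$.
\end{itemize}

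As a cross-check, one can use $\dim\g=\dim\g_0+2(2n+1)$, counting roots by $\mu$-pairing. For $E_8$ this gives $248=2(57)+134$, with $\g_0=E_7\oplus\C$ of dimension $134$.

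The only real obstacle is bookkeeping. The simple-root ordering and the long/short norms must match those in the figure. I would double-check them against the highest-root coefficients in a standard table, e.g.\ Bourbaki's, and against the known dimensions of the contact gradings ($\dim\g_{-1}=20,32,56,14,4$).
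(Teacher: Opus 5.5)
Your proposal is correct and follows the paper's route: the paper simply invokes $n=\langle\mu,\delta\rangle-1$ and reports the resulting well-known values, and your evaluation $\langle\mu,\delta\rangle=\frac12\sum_i m_i|\alpha_i|^2$ via the highest-root coefficients is exactly the computation the paper leaves implicit. One labeling caveat: your $F_4$ numbering is Bourbaki's, with $\alpha_1,\alpha_2$ long, whereas the paper's own $F_4$ computation in Lemma~\ref{lemma::LaplacianEigenvalues} treats $\alpha_1,\alpha_2$ as short, so your claim that your norms are ``exactly the values recorded in'' the figure may not hold literally; the value $\langle\mu,\delta\rangle=8$ does not depend on the labeling, so this is harmless.
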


Recall that the action of $\square$ on a $\g_0$-irreducible subrepresentation $V^{\nu}\le C^*(\g_-,\g)$ whose dual has highest weight $\nu$ is given by
\begin{equation}\label{eqn::Eigenvalue2}
\square=\frac{1}{2}(|\nu|^2-|\mu|^2)+\langle \nu-\mu, \delta \rangle.
\end{equation}
In particular, suppose that $V^{\nu}\subset \g_0$. It is either a trivial representation or a simple ideal in $\g_0$. Either way, $V^{\nu}$ is self-dual and thus has highest weight $\nu$. It follows from Lemma~\ref{lemma::NormSame} that if $V^{\nu}$ is a simple ideal and $\g$ is not isomorphic to $B_3$ or $G_2$ then $|\nu|^2=|\mu|^2$ and equation \eqref{eqn::Eigenvalue2} reduces to \begin{equation}\label{eqn::Eigenvalue3}
    \square=\langle\nu-\mu, \delta \rangle.
\end{equation}
    For the following proof, it may also be useful to combine this with 
\begin{equation}\label{eqn::Eigenvalue4}
n-\square
=n+\langle \mu-\nu, \delta \rangle
=2n+1-\langle \nu, \delta \rangle.
\end{equation}
Furthermore, note that in the simply laced Dynkin diagrams of type $A,D$ and $E$ we have $|\alpha_i|^2=2$ for every simple root $\alpha_i\in \Delta^0$, so the simple roots $\alpha_i$ and the fundamental weights $\lambda_i$ form
dual bases, which slightly simplifies the calculation of some inner products.
\begin{lemma}\label{lemma::LaplacianEigenvalues}
Let $\g$ be a contact-graded complex simple Lie algebra.
\begin{enumerate}[(1)]
\item The operator $n-\square$ acts on $\z(\g_0)$ by $2(n+1)$.
\item If $\g\cong A_l$ for $l\ge 2$, then $n-\square$ acts on $\g_0^{ss}$ by $n+2$.
\item If $\g\cong B_l$ for $l\ge 3$ or $\g\cong D_l$ for $l\ge 4$, let $\g_0^{ss}=\g_0'\oplus \g_0''$ where $\g_0'$ corresponds to the simple root $\alpha_1$, and $\g_0''$ corresponds to simple roots $\alpha_i$ for $i\ge 3$. Then $n-\square$ acts on $\g_0'$ by $2n$ and on $\g_0''$ by $n+4$. 
\item If $\g\cong C_l$, for $l\ge 2$ then $n-\square$ acts on $\g_0^{ss}$ by $n+1$.
\item If $\g\cong E_6$, then $n-\square$ acts on $\g_0^{ss}$ by $16$.
\item If $\g\cong E_7$, then $n-\square$ acts on $\g_0^{ss}$ by $24$.
\item If $\g\cong E_8$, then $n-\square$ acts on $\g_0^{ss}$ by $40$.
\item If $\g\cong F_4$, then $n-\square$ acts on $\g_0^{ss}$ by $12$.
\item If $\g\cong G_2$, then $n-\square$ acts on $\g_0^{ss}$ by $\frac{16}{3}$.
\end{enumerate}
\end{lemma}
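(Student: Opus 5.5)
The plan is to evaluate equation \eqref{eqn::Eigenvalue2} directly in each case. Since $\z(\g_0)$ and each simple ideal $\s\subset\g_0$ are self-dual, $\nu$ is simply the highest weight of that summand. The key auxiliary fact is $\langle\alpha_i,\delta\rangle=\frac12|\alpha_i|^2$ for every simple root, which holds because $\delta=\sum\lambda_i$ and $\lambda_j(H_{\alpha_i})$ pairs to the Cartan normalization. Writing $\nu=\sum_{\alpha_i\in S}c_i\alpha_i$ as in the proof of Lemma~\ref{lemma::NormSame}, this gives
\[
\langle\nu,\delta\rangle=\tfrac12\sum c_i|\alpha_i|^2 .
\]
This is a weighted height, read off from the highest root of the Dynkin subdiagram $S$ together with the norms in Figure~\ref{fig::DynkinDiagrams}. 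I will also use $\langle\mu,\delta\rangle=n+1$, proven above.

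For (1), $\z(\g_0)$ is a sum of trivial representations, so $\nu=0$. Then \eqref{eqn::Eigenvalue2} gives
\[
\square=\tfrac12(0-2)-(n+1)=-(n+2),
\]
hence $n-\square=2(n+1)$.

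For (2) and (4)--(8), and for (3) except in the $B_3$ case, Lemma~\ref{lemma::NormSame} gives $|\nu|^2=2$, so \eqref{eqn::Eigenvalue4} applies: $n-\square=2n+1-\langle\nu,\delta\rangle$. I then go case by case.
\begin{itemize}
\item $A_l$: here $n=l-1$ and $\nu=\alpha_2+\cdots+\alpha_{l-1}$, so $\langle\nu,\delta\rangle=l-2=n-1$ and $n-\square=n+2$.
\item $C_l$: the grading node is $\alpha_1$ and $n=l-1$. The subdiagram is $C_{l-1}$ with highest root $2\alpha_2+\cdots+2\alpha_{l-1}+\alpha_l$. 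The short roots have norm $1$ and $\alpha_l$ has norm $2$, so $\langle\nu,\delta\rangle=(l-2)+1=n$, giving $n+1$.
\item $B_l$, $D_l$: the crossed node is $\alpha_2$. For $\g_0'\cong A_1$ we have $\nu=\alpha_1$ and $\langle\nu,\delta\rangle=1$, giving $2n$. For $\g_0''$ (type $B_{l-2}$ or $D_{l-2}$, or $A_1\oplus A_1$ when $l=4$), I compute $n$ from $\langle\mu,\delta\rangle-1$ with $\mu=\alpha_1+2\alpha_2+\cdots$, and the weighted height of the subdiagram's highest root. I then check that $\langle\nu,\delta\rangle=n-3$, so that $n-\square=n+4$.
\item $E_6,E_7,E_8$: the subdiagrams are $A_5,D_6,E_7$. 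Their highest roots have heights $5,9,17$, and with $n=10,16,28$ this gives $21-5=16$, $33-9=24$ and $57-17=40$.
\item $F_4$: the subdiagram is $C_3$ and I do the analogous weighted computation with $n=7$, aiming for $15-3=12$.
\end{itemize}

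The remaining work is the two exceptional cases $B_3$ and $G_2$, where $|\nu|^2\neq|\mu|^2$ and I must use the full formula \eqref{eqn::Eigenvalue2}.
\begin{itemize}
\item $B_3$: here $\g_0''$ is the $A_1$ on the short root $\alpha_3$, so $\nu=\alpha_3$ with $|\alpha_3|^2=1$ and $\langle\nu,\delta\rangle=\tfrac12$, while $n=4$. This gives $\square=\tfrac12(1-2)+\tfrac12-5=-5$, so $n-\square=9=n+5$. That does not match the stated $n+4$, so either my norms or my identification of $n$ must be rechecked against the figure.
\item $G_2$: $\g_0^{ss}$ is the $A_1$ on the short simple root, with $|\alpha|^2=\tfrac23$ under $|\mu|^2=2$ and $n=2$. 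This gives $\square=\tfrac12(\tfrac23-2)+\tfrac13-3=-\tfrac{10}{3}$, so $n-\square=\tfrac{16}{3}$, as claimed.
\end{itemize}
I expect the main obstacle to be this bookkeeping: getting the simple-root norms and the values of $n$ right in the non-simply-laced cases, and in particular reconciling the $B_3$ case with the uniform $B_l/D_l$ formula. To resolve $B_3$ I would first recompute $\dim\g_{-1}$ directly from the root system of $B_3$ and the normalization $|\mu|^2=2$. Only if that disagrees would I revisit which node is crossed.
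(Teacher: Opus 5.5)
Your approach is the same as the paper's: evaluate \eqref{eqn::Eigenvalue2} using $\langle\alpha_i,\delta\rangle=\frac12|\alpha_i|^2$, and use \eqref{eqn::Eigenvalue4} wherever Lemma~\ref{lemma::NormSame} applies. Your computations for (1), (2), (4)--(9) are correct.

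The genuine gap is the $B_3$ case of (3), where you use the wrong value of $n$. For $B_3$ one has $n=3$, not $4$. Indeed $\mu=\alpha_1+2\alpha_2+2\alpha_3$ with $|\alpha_1|^2=|\alpha_2|^2=2$ and $|\alpha_3|^2=1$, so $\langle\mu,\delta\rangle=1+2+1=4$ and $n=\langle\mu,\delta\rangle-1=3$. Equivalently, the positive roots with $\alpha_2$-coefficient $1$ are
\[
\alpha_2,\ \alpha_1+\alpha_2,\ \alpha_2+\alpha_3,\ \alpha_1+\alpha_2+\alpha_3,\ \alpha_2+2\alpha_3,\ \alpha_1+\alpha_2+2\alpha_3,
\]
so $\dim\g_{-1}=6$. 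In general $n=m-4$ for $\mathfrak{so}_m$, and $n=4$ is the $D_4$ value. With the correct $n$,
\[
\square=\tfrac12(1-2)+\tfrac12-4=-4,
\]
so $n-\square=7=n+4$, as claimed. As written, though, your proposal concludes $n+5$ and leaves the case open, so (3) is not established.

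The paper's route avoids this bookkeeping. It writes $\nu=\mu-\alpha_1-2\alpha_2-\alpha_3$ for $\g_0''$; this also holds in $B_3$, where it gives $\nu=\alpha_3$. Then $\langle\nu-\mu,\delta\rangle$ is computed without knowing $n$. It equals $-4$ when $\alpha_1,\alpha_2,\alpha_3$ are all long. In $B_3$ it equals $-\frac72$, and the extra term $\frac12(|\alpha_3|^2-|\mu|^2)=-\frac12$ restores $\square=-4$.

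Two smaller points. First, for $\g_0'$ in $B_3$ you implicitly use \eqref{eqn::Eigenvalue4} after excluding $B_3$ from Lemma~\ref{lemma::NormSame}. This is legitimate only because $\nu=\alpha_1$ is long, which you should state. Second, the generic $B_l$, $D_l$ and $F_4$ cases are only announced ("I then check", "aiming for"), but they do work out:
\begin{itemize}
\item $B_l$, $l\ge4$: $n=2l-3$, and the highest root of $B_{l-2}$ has weighted height $2l-6=n-3$.
\item $D_l$, $l\ge5$: $n=2l-4$, and the highest root of $D_{l-2}$ has height $2l-7=n-3$.
\item $D_4$: $n=4$, and each $A_1$ summand has height $1=n-3$.
\item $F_4$: the $C_3$ highest root $2\alpha_1+2\alpha_2+\alpha_3$ (paper's labelling, $\alpha_1,\alpha_2$ short) has weighted height $3$.
\end{itemize}
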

\begin{proof}
    \begin{enumerate}[(1)]
    \item An element of $\z(\g_0)$ belongs to a trivial $\g_0$-representation of highest weight $\nu=0$, so
\begin{align*}
\square
&=-\frac{1}{2}(|\mu|^2)-\langle\mu, \delta \rangle\\
&=-\frac{1}{2}(2)-(n+1)\\
&=-(n+2)
\end{align*}
and $n-\square$ acts by $2(n+1)$.

\item The algebra $\g_0^{ss}$ has highest weight $\nu=\mu-\alpha_1-\alpha_{l}$. As a result, $\square$ acts on $\g_0^{ss}$ by
\[
\langle \nu-\mu, \delta\rangle
=-\frac{|\alpha_1|^2}{2}
-\frac{|\alpha_l|^2}{2}
=-2
\]
and $n-\square$ acts by $n+2$.
\item We have $\g_0'\cong A_1$ and either $\g_0''\cong B_{l-2}$ or $\g_0''\cong D_{l-2}$. In particular, in the $D_l$ case for $l=4$, $D_{l-2}\cong A_1 \oplus A_1$. First consider the action of $\square$ on $\g_0'$. The highest weight is $\nu=\alpha_1$. Even if $\g\cong B_3$ we still have $|\alpha_1|^2=|\mu|^2=2$ so we can use equation \eqref{eqn::Eigenvalue3}. Then $\square$ acts by
\[
\langle \nu-\mu,\delta\rangle
=\frac{|\alpha_1|^2}{2}-(n+1)
=-n
\]
and $n-\square$ acts by $2n$.

Now consider the action of $\square$ on $\g_0''$. We divide into three cases: \textbf{(1)} $B_3$, \textbf{(2)} $D_4$, and \textbf{(3)} $B_l$ for $l\ge 4$ or $D_l$ for $l\ge 5$. 

Case \textbf{(1)}. In the $B_3$ case, $\square$ acts on $\g_0''$ by
\begin{align*}
\frac{1}{2}(|\alpha_3|^2-|\mu|^2)
+\langle \alpha_3-\mu, \delta \rangle
=-\frac{1}{2}+\frac{|\alpha_3|^2}{2}-4
=-4.
\end{align*}

Case \textbf{(2)}. In the $D_4$ case, the ideal $\g_0''$ has two simple ideals of rank $1$. Since they are related by Dynkin diagram automorphism preserving the crossed node $\alpha_2$, the action of $\square$ on either of those simple ideals is equal to its action on $\g_0'$. We computed above that this is equal to $-n$. However, since $n=\langle \delta, \mu \rangle-1=4$, the Kostant Laplacian $\square$ acts on $\g_0''$ by $-4$.

Case \textbf{(3)}.
Otherwise $\g$ has type $B_{l}$ for $l\ge 4$ or $D_{l}$ for $l\ge 5$. Either way, we use equation \eqref{eqn::Eigenvalue3}. We have $\nu=\mu-\alpha_1-2\alpha_2-\alpha_3$ so $\nu-\mu=-(\alpha_1+2\alpha_2+\alpha_3)$ and $\square$ acts by
\[
\langle -(\alpha_1+2\alpha_2+\alpha_3),\delta\rangle
=-\left(\frac{|\alpha_1|^2}{2}
+|\alpha_2|^2
+\frac{|\alpha_3|^2}{2} \right)=-4.
\]
Therefore in all three cases $\square$ acts on $\g_0''$ by $-4$ and $n-\square$ acts by $n+4$.

 \item In type $C_l$, $\g_0^{ss}$ has highest weight $\mu-2\alpha_1$. Then $\square$ acts on $\g_0^{ss}$ by
\[
\langle \nu-\mu, \delta \rangle
=\langle-2\alpha_1, \delta \rangle
=-|\alpha_1|^2
=-1.
\]
Then $n-\square=n+1$.

\item
 For $E_6$, $\g_0^{ss}$ has type $A_5$, $\nu=\alpha_1+\alpha_2+\alpha_3+\alpha_4+\alpha_5$ and $\langle \nu, \delta  \rangle=5$. Using equation \eqref{eqn::Eigenvalue4}, $n-\square=21-5=16$.

 \item
 For $E_7$, $\g_0^{ss}$ has type $D_6$, $\nu=\alpha_1+2\alpha_2+2\alpha_3+2\alpha_4+\alpha_5+\alpha_7$, and $\langle \nu, \delta \rangle=9$. Using equation \eqref{eqn::Eigenvalue4}, $n-\square=33-9=24$. 

 \item
 For $E_8$, $\g_0^{ss}$ has type $E_7$, $\nu=\alpha_2+2\alpha_3+3\alpha_4+4\alpha_5+3\alpha_6+2\alpha_7+2\alpha_8$ and $\langle\nu, \delta \rangle=17$. Using equation \eqref{eqn::Eigenvalue4}, $n-\square=57-17=40$.

\item In type $F_4$, $\g_0^{ss}$ has type $C_3$, $n=\langle \mu, \delta \rangle-1=7$, $2n+1=15$, and $\nu=2\alpha_1+2\alpha_2+\alpha_3$. Then 
\[
\langle \nu, \delta \rangle
=2\frac{|\alpha_1|^2}{2}
+2\frac{|\alpha_2|^2}{2}
+\frac{|\alpha_3|^2}{2}
=1+1+1=3.
\]
Using equation \eqref{eqn::Eigenvalue4}, $n-\square=15-3=12$.

\item In type $G_2$ we have $\nu=\alpha_1$, $|\nu|^2=\frac{2}{3}$, $\langle \nu, \delta\rangle=\frac{1}{3}$, $n=\langle \delta, \mu \rangle-1=2$ and $2n+1=5$. Using equation \eqref{eqn::Eigenvalue2}, $\square$ acts by
\[
\frac{1}{2}(|\nu|^2-|\mu|^2)
+\langle \nu-\mu, \delta \rangle
=-\frac{2}{3}+\langle \nu, \delta\rangle-(n+1)
=-\frac{1}{3}-(n+1)
\]
so
\[
n-\square=2n+1+\frac{1}{3}=\frac{16}{3}.
\]
    \end{enumerate}
\end{proof}


\begin{thebibliography}{99}
\bibitem{Alt}
Alt, J. (2011). \emph{Weyl connections and the local sphere theorem for quaternionic contact structures}. Annals of Global Analysis and Geometry, Vol 39, 165--186.

\bibitem{AsanoInoueSeo}
Asano, K., Inoue, K., Seo, W. (1994). \emph{Lagrangian contact structures on projective cotangent bundles}. Osaka Journal of Mathematics, Vol 31(4), 837--860.

\bibitem{BryantGriffithsHsu}
Bryant, R., Griffiths, P., Hsu, L. (1995). \emph{Toward a geometry of differential equations}. Geometry, Topology and Physics, Vol 4, 1--76.

\bibitem{Cap}
\v Cap, A. (2008). \emph{Infinitesimal automorphisms and deformations of parabolic geometries}. Journal of the European Mathematical Society, Vol 10(2), 415--437.

\bibitem{CapGuo}
\v Cap, A., Guo, Z. (2026). \emph{Weyl structures for path geometries}. arXiv:2604.12846.

\bibitem{CapSlovak1}
\v Cap, A., Slov\'ak, J. (2003). \emph{Weyl structures for parabolic geometries}. Mathematica Scandinavica, Vol 93(1), 53--90.

\bibitem{CapSlovak2}
\v Cap, A., Slov\'ak, J. (2009). \emph{Parabolic Geometries I: Background and General Theory}. Mathematical Surveys and Monographs, Vol 154, American Mathematical Society.

\bibitem{ChengLee}
Cheng, J., Lee, J. (1990). \emph{The Burns-Epstein invariant and deformation of CR structures}. Duke Mathematical Journal, Vol 60(1), 221--254.

\bibitem{ChernMoser}
Chern, S.S., Moser, J.K. (1974). \emph{Real hypersurfaces in complex manifolds}. Acta Mathematica, Vol 133, 219--271. 

\bibitem{DragomirTomassini} 
Dragomir, S., Tomassini, G. (2006). \emph{Differential Geometry and Analysis on CR Manifolds}. Progress in Mathematics, Vol 246, Birkh\"auser.

\bibitem{Fox}
Fox, D.J.F. (2005). \emph{Contact Projective Structures}. Indiana University Mathematics Journal, Vol 54(6), 1547--1598.

 \bibitem{GoverGraham}
Gover, A.R., Graham, C.R. (2005). \emph{CR invariant powers of the sub-Laplacian}. Journal f{\"u}r die reine und angewandte Mathematik, Vol 583, 1--27.

\bibitem{Guo}
Guo, Z. (2025). \emph{Two Fefferman-type constructions involving almost Grassmann structures and path geometries}. arXiv:2509.04878.

\bibitem{Harrison} Harrison, J. (1995). \emph{Some problems in the invariant theory of parabolic geometries}, PhD thesis, University of Edinburgh.

\bibitem{Herzlich}
Herzlich, M. (2009). \emph{The canonical Cartan bundle and connection in CR geometry}. Mathematical Proceedings of the Cambridge Philosophical Society, Vol 146, 415--434.

\bibitem{IvanovVassilevZamkovoy}
Ivanov, S., Vassilev, D., Zamkovoy, S. (2010). \emph{Conformal paracontact curvature and the local flatness theorem}. Geometriae Dedicata, Vol 144, 79--100.

\bibitem{KobayashiNomizu}
Kobayashi, S., Nomizu, K. (1963). \emph{Foundations of Differential Geometry: Volume 1}. Interscience Publishers, reprinted by Wiley Classics Library. 
 
 \bibitem{Lee} Lee, J. (1988). \emph{Pseudo-Einstein structures on CR manifolds}. American Journal of Mathematics, Vol 110(1), 157--178.

 \bibitem{Matsumoto} Matsumoto, Y. (2022). \emph{The CR Killing operator and Bernstein-Gelfand-Gelfand construction in CR geometry}. arXiv:2205.11022.

 \bibitem{Montano} Montano, B.C. (2005). \emph{Bi-Legendrian Connections}. Annales Polonici Mathematici, Vol 86, 79--95. 

\bibitem{Tanaka1} Tanaka, N. (1962). \emph{On the pseudo-conformal geometry of hypersurfaces of the space of $n$ complex variables}. Journal of the Mathematical Society of Japan, Vol 14(4), 397--429.

 \bibitem{Tanaka2} Tanaka, N. (1976). \emph{A differential geometric study on strongly pseudo-convex manifolds}. Lectures in Mathematics, Department of Mathematics, Kyoto University, No 9.

 \bibitem{Webster} Webster, S. (1978). \emph{Pseudo-Hermitian structures on a real hypersurface}. Journal of Differential Geometry, Vol 13(1), 25--41.

 \bibitem{Zalabova} Zalabov{\'a}, L. (2010). \emph{Symmetries of parabolic contact structures}. Journal of Geometry and Physics, Vol 60(11), 1698--1709.
\end{thebibliography}
\end{document}